\DeclareSymbolFont{bbold}{U}{bbold}{m}{n}
\DeclareSymbolFontAlphabet{\mathbbm}{bbold}
\theoremstyle{plain}
	\newtheorem{theorem*}{Theorem}
	\newtheorem{theorem}{Theorem}[section]
	\newtheorem{theoremi}{Theorem}
	\newtheorem{corollaryi}[theoremi]{Corollary}
\numberwithin{equation}{section}
	\newtheorem{proposition}[theorem]{Proposition}
	\newtheorem{lemma}[theorem]{Lemma}
	\newtheorem{corollary}[theorem]{Corollary}
	\newtheorem{claim}{Claim}[theorem]
\theoremstyle{definition}
	\newtheorem{definition}[theorem]{Definition}
	\newtheorem{notation}[theorem]{Notation}
	\newtheorem{question}[theorem]{Question}
	\newtheorem*{acknow}{Acknowledgements}
\theoremstyle{remark}
	\newtheorem{remark}[theorem]{Remark}
\numberwithin{equation}{section}
\newcommand{\EFD}{\text{EFD}}
\newcommand{\PI}{\text{PI}}
\newcommand{\rk}{\text{qr}}
\newcommand{\Clop}{\text{Clop}}
\newcommand{\bbZ}{\mathbb{Z}}
\newcommand{\bbQ}{\mathbb{Q}}
\newcommand{\N}{\mathbb{N}}
\newcommand{\cstar}{$\mathrm{C}^\ast$}
\newcommand{\bbN}{\mathbb{N}}
\newcommand{\e}{\varepsilon}
\newcommand{\xMapsto}[2][]{\ext@arrow 0599{\Mapstofill@}{#1}{#2}}
\def\Mapstofill@{\arrowfill@{\Mapstochar\Relbar}\Relbar\Rightarrow}
\newcommand{\dotminus}{ 
\buildrel\textstyle\ .\over{\hbox{ 
\vrule height3pt depth0pt width0pt}{\smash-} 
}\ }
\newcommand{\norm}[1]{\left\lVert #1 \right\rVert}
\renewcommand{\phi}{\varphi}
\title{Games on AF-algebras}
\date{\today}
\author[BdB]{Ben De Bondt}
\address[Ben De Bondt]
{Institut de Math\'ematiques de Jussieu (IMJ-PRG)\\
Universit\'e Paris Cit\'e\\
B\^atiment Sophie Germain\\
8 Place Aur\'elie Nemours \\ 75013 Paris, France}
\email{ben.de-bondt@imj-prg.fr}
\urladdr{https://perso.imj-prg.fr/ben-debondt/}
\author[AV]{Andrea Vaccaro}
\address[Andrea Vaccaro]
{Institut de Math\'ematiques de Jussieu (IMJ-PRG)\\
Universit\'e Paris Cit\'e\\
B\^atiment Sophie Germain\\
8 Place Aur\'elie Nemours \\ 75013 Paris, France}
\email{vaccaro@imj-prg.fr}
\urladdr{https://sites.google.com/view/avaccaro/home}
\author[BV]{Boban Veli\v{c}kovi\'c}
\address[Boban Veli\v{c}kovi\'c]
{Institut de Math\'ematiques de Jussieu (IMJ-PRG)\\
Universit\'e Paris Cit\'e\\
B\^atiment Sophie Germain\\
8 Place Aur\'elie Nemours \\ 75013 Paris, France}
\author[AV]{Alessandro Vignati}
\address[Alessandro Vignati]
{Institut de Math\'ematiques de Jussieu (IMJ-PRG)\\
Universit\'e Paris Cit\'e\\
B\^atiment Sophie Germain\\
8 Place Aur\'elie Nemours \\ 75013 Paris, France}
\email{alessandro.vignati@imj-prg.fr}
\urladdr{http://www.automorph.net/avignati}
\keywords{AF-algebras, Classification, Infinitary Logic, EF-Games, Scott Rank.}
\subjclass[2010]{03C98, 46L05, 03C75}
\begin{document}
\maketitle
\setcounter{tocdepth}{1}
%\tableofcontents
\begin{abstract}
We analyze \cstar-algebras, particularly AF-algebras, and their $K_0$-groups in the context of the infinitary logic $\mathcal{L}_{\omega_1 \omega}$. Given two separable unital AF-algebras $A$ and $B$, and considering their $K_0$-groups as ordered unital groups, we prove that $K_0(A) \equiv_{\omega \cdot \alpha} K_0(B)$ implies $A \equiv_\alpha B$, where $M \equiv_\beta N$ means that $M$ and $N$ agree on all sentences of quantifier rank at most $\beta$. This implication is proved using techniques from Elliott's classification of separable AF-algebras, together with an adaptation of the Ehrenfeucht-Fra\"iss\'e game to
the metric setting. We use moreover this result to build a family $\{ A_\alpha \}_{\alpha < \omega_1}$ of pairwise non-isomorphic separable simple unital AF-algebras which satisfy $A_\alpha \equiv_\alpha A_\beta$ for every $\alpha < \beta$. In particular, we obtain a set of separable simple unital AF-algebras of arbitrarily high Scott rank. Next, we give a partial converse to the aforementioned implication, showing that $A \otimes \mathcal{K} \equiv_{\omega + 2 \cdot \alpha +2} B \otimes \mathcal{K}$ implies $K_0(A) \equiv_\alpha K_0(B)$, for every unital \cstar-algebras $A$ and $B$.
\end{abstract}

\section{Introduction}\label{S.Intro}

In this paper we investigate approximately finite \cstar-algebras, or simply AF-algebras, in the setting of infinitary continuous logic. 

Infinitary logics are extensions of first order logic that allow infinite expressions as formulas. In the classical discrete setting there are many instances of such logics that are still tame enough to allow a useful model theory to be developed (see e.g. \cite{Barwise-Feferman}). One of the most basic examples is $\mathcal L_{\omega_1\omega}$ which, in addition to the usual formulas considered in first order logic, also allows countable conjunctions and disjunctions, 
subject only to the restriction that the total number of free variables remains finite.

The adaptation of first order model theory to the continuous setting is by now a well-established field of research (we refer to \cite{BYBHU} for an introduction to this topic). A version of $\mathcal L_{\omega_1\omega}$ suitable for metric structures was introduced in \cite{BYI.MTForcing} (see also \cite{Eagle.Omit}). This work was later expanded in \cite{SRmetric}, where a large portion of Scott analysis for discrete infinitary logic has been extended to the metric setting. A basic definition in this framework, which has an immediate generalization to continuous logic, is the notion of \emph{quantifier rank}. The quantifier rank of an $\mathcal{L}_{\omega_1 \omega}$-formula $\phi$ is a countable ordinal $\rk(\phi)$ quantifying the complexity of $\phi$ depending on the length of chains of nested quantifiers appearing in it (see \S\ref{ss.dil} and \S\ref{sss.metricinf}). This rank naturally defines a hierarchy on $\mathcal{L}_{\omega_1 \omega}$-formulas, which is reflected on structures through the relations $\equiv_\alpha$. Given two structures $M$ and $N$ in the same (discrete or metric) language $\mathcal{L}$, and an ordinal $\alpha < \omega_1$, we write $M \equiv_\alpha N$ if $M$ and $N$ agree on all sentences of quantifier rank at
most $\alpha$. The main goal of this paper is to investigate the behavior of the relations $\equiv_\alpha$ in the context of \cstar-algebras and of their $K_0$-groups, focusing in particular on AF-algebras.

For the purpose of this paper, an AF-algebra is a {separable} \cstar-algebra $A$ containing a directed family of finite-dimensional subalgebras whose union is dense in $A$. The separability assumption on $A$ allows us to always assume that such directed families are indexed over $\bbN$. The definition of AF-algebra gives rise to an extremely rich family of (infinite dimensional) \cstar-algebras, which yet satisfy several robust and desirable regularity properties. Exploiting such structural features, in his seminal work \cite{Elliott.AF} Elliott showed that AF-algebras can be completely classified by their $K_0$-groups. These are classical \cstar-algebraic invariants which take the form of abelian preordered groups and, for AF-algebras correspond to countable \emph{dimension groups} (see \S\ref{ss.AFK0}). Elliott's result ignited what nowadays has become a prominent line of research in \cstar-algebras, usually referred to as the \emph{Elliott Classification Program}, aiming to classify larger and larger classes of separable amenable \cstar-algebras via their $K$-theoretic invariants (the bibliography on this subject is vast, as this effort spanned over decades of works by numerous researchers; see \cite{winter:survey} for an overview and further references).

In the context of model theory, AF-algebras can be interpreted as structures in the language of \cstar-algebras formulated in \cite{FHS.II} (see also \cite{Muenster}). The analysis of AF-algebras through model-theoretical lenses was approached in \cite{OmitAF}, where it is shown that the class of AF-algebras is not axiomatizable (i.e. not characterized by their first order theory, see \cite[\S 2.5]{Muenster}), yet it is expressible as an omitting type condition (at least for separable models). These results have been recast in \cite[\S 5.7]{Muenster} in terms of so-called \emph{uniform families of formulas}, in an early attempt to formalize some aspects of infinitary logic in the context of \cstar-algebras.

Of a different flavor is the work done in \cite{scowcroft}, where the author investigates how certain partial segments of the first order theory of an AF-algebra are influenced by the first order theory of its dimension group, and vice versa. In this paper we consider a problem of similar nature, aiming to understand to what extent the theory of an AF-algebra is determined by the theory of its $K_0$-group, seen as a structure in the language of ordered group with a distinguished order unit (see Remark \ref{remark:interpret}). We consider this question in the context of the infinitary logic $\mathcal{L}_{\omega_1 \omega}$. This change of perspective proves to be extremely effective, leading to what can be interpreted as a model-theoretic analogue of what Elliott proved in \cite{Elliott.AF}.

\begin{theoremi}[Corollary~\ref{cor:K0toalg}] \label{mainthm:K0toalg}
Let $A$ and $B$ be unital AF-algebras and fix $\alpha < \omega_1$. Then
\[
(K_0(A),K_0(A)_+,[1_A]) \equiv_{\omega \cdot \alpha} (K_0(B),K_0(B)_+,[1_B]) \Rightarrow A \equiv_\alpha B.
\]
\end{theoremi}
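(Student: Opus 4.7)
The plan is to characterize the equivalence relations on both sides via Ehrenfeucht--Fra\"iss\'e games: the classical discrete back-and-forth game of length $\omega \cdot \alpha$ for the dimension group side, and a metric EF-game of quantifier rank $\alpha$ for the algebra side (as developed in the metric $\mathcal{L}_{\omega_1 \omega}$ framework). The theorem then reduces to the purely game-theoretic claim that a winning strategy for Duplicator in the long $K_0$-game can be converted into a winning strategy for Duplicator in the shorter algebra-game.

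The bridge between the two games is Elliott's classification of AF-algebras. Given a position in the algebra-game with a fresh tuple $\bar a$ in $A$ supplied by Spoiler, Duplicator begins by approximating $\bar a$, up to a prescribed tolerance, by elements of a finite-dimensional subalgebra $F \subseteq A$. The key point is that by Elliott's existence and uniqueness theorems, the pair $F \hookrightarrow A$ is determined up to approximate unitary equivalence by purely $K_0$-theoretic data: the isomorphism type of $F$ (a finite tuple of positive integers) together with the $K_0(A)$-classes of its minimal central projections and the order-unit datum. Duplicator's response is then manufactured by translating Spoiler's move into such $K_0$-data, playing a (variable) finite number of rounds of the $K_0$-game to obtain matching $K_0$-data on the $B$-side, lifting this data to an embedding $G \hookrightarrow B$ via the existence theorem, and using uniqueness to extract a tuple in $B$ that approximates the image of $\bar a$ under a $\ast$-homomorphism $F \to G$ realising the $K_0$-matching.

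This translation also accounts for the factor of $\omega$: each single round of the algebra-game consumes finitely many but Spoiler-dependent, a priori unbounded, rounds of the $K_0$-game (for instance, one round per minimal central projection of the approximating $F$, whose number is chosen by Spoiler), so $\omega$ rounds of the $K_0$-game per algebra-round suffice and are necessary. The transfinite induction on $\alpha$ is then routine: limit stages follow from the fact that both $\equiv_\alpha$ and $\equiv_{\omega \cdot \alpha}$ are the intersections of their predecessors, and successor stages apply the inductive hypothesis to the residual games of ranks $\alpha$ and $\omega \cdot \alpha$ after one algebra-round has been played.

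The main obstacle I expect is the metric bookkeeping. In continuous logic Duplicator wins only approximately, so one must control carefully how $\varepsilon$-errors propagate through the chain algebra $\to K_0 \to$ algebra. Elliott's own intertwining machinery, with its schedule $\varepsilon_n \to 0$ of summable tolerances, is the right framework for this, but verifying that the finite-dimensional subalgebra produced on the $B$-side is indeed $\delta$-close to a tuple matching Spoiler's move --- with $\delta$ tamely controlled by $\varepsilon$ and by the integer data involved --- will require careful tracking of approximations at each intermediate step, and it is this step where the technical heart of the argument lies.
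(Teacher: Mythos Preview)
Your high-level plan matches the paper's: characterize both sides by games, then transfer a winning strategy for Duplicator from $\EFD_{\omega\cdot\alpha}(K_0(A),K_0(B))$ to a metric game of rank $\alpha$ on $(A,B)$, using Elliott's existence and uniqueness lemmas (Lemma~\ref{lemma:classification}) to lift $K_0$-data to $^*$-homomorphisms and to conjugate them into coherence with previous rounds. Your explanation of the factor $\omega$ (one $K_0$-round per minimal central projection of the approximating finite-dimensional algebra) is exactly the mechanism the paper uses.

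Two points of divergence are worth flagging. First, the ``main obstacle'' you anticipate---propagation of $\varepsilon$-errors and a summable schedule of tolerances---does not arise in the paper's argument, because the metric game it uses (the $\PI$-game of Definition~\ref{def.PI}) is engineered to sidestep it. In that game the tolerance $\varepsilon_k$ is spent \emph{once}, by Duplicator, to perturb Spoiler's element $c_k$ into a nearby element $a_k$ of a finite-dimensional subalgebra; after that perturbation the winning condition demands an \emph{exact} isomorphism $\langle a_0,\dots,a_{k-1}\rangle \cong \langle b_0,\dots,b_{k-1}\rangle$. So there is no accumulation of error and no intertwining-style bookkeeping: existence produces an honest injective $^*$-homomorphism $F\to B$, and uniqueness conjugates it to agree exactly with the previous partial isomorphism. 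If you instead work with a metric EF-game whose winning condition is only approximate, you will face the bookkeeping you describe, and you would also need to verify independently that such a game detects $\equiv_\alpha$.

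Second, the paper does not organize the strategy transfer as a transfinite induction on $\alpha$. Theorem~\ref{thm:gamesK0alg} builds the $\PI_\alpha$-strategy directly, round by round, with $\alpha$ appearing only as the clock; the induction on $\alpha$ occurs separately, in the proof that $\text{II}\uparrow\PI_\alpha(A,B)$ implies $A\equiv_\alpha B$ (Proposition~\ref{prop:PI}). Your proposed inductive scheme would also work, but it mixes the two steps and is not needed once the strategy transfer is done uniformly.
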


The proof of Theorem~\ref{mainthm:K0toalg} makes essential use of back-and-forth arguments, which we present in the form of Ehrenfeucht-Fra\"iss\'e games. These games are played by two players on two given structures $M$ and $N$ in the same language. In each round the players pick elements from the structures, with Player I trying to prove that $M$ and $N$ are different, and Player II claiming instead that they look the same. In the discrete setting, it is well-known that the existence of winning strategies for Player II in the \emph{Dynamic Ehrenfeucht-Fra\"iss\'e game} $\EFD_\alpha(M,N)$ is equivalent to $M \equiv_\alpha N$ (see \cite[Chapter 7]{van_games}, or Definition \ref{def:EFD} and Theorem~\ref{thm:EFD}). In an attempt to define games capable of detecting the relation $\equiv_\alpha$ in the continuous setting, we introduce a family of \emph{Partial Isomorphism games} $\PI_\alpha(A,B)$, for metric structures $A$ and $B$, coding the existence of partial isomorphisms between finitely generated substructures of $A$ and $B$ (Definition \ref{def.PI}). Our definition differs from other adaptations of EF-games to the metric setting that have already appeared in the literature (see \cite{brad, isaac} and \cite{hirvonen2021games}).

Even though it is unclear whether the existence of winning strategies for Player II in $\PI_\alpha(A,B)$ is equivalent to $A \equiv_\alpha B$, it at least implies it (Proposition~\ref{prop:PI}). In view of this, the proof of Theorem~\ref{mainthm:K0toalg} reduces to a transfer of winning strategies (for Player II) for $\EFD_{\omega \cdot \alpha}(K_0(A), K_0(B))$ to winning strategies for $\PI_\alpha(A,B)$. Concretely, this amounts to lifting partial isomorphisms between the $K_0$-groups to partial isomorphisms between the corresponding AF-algebras. This lifting process is at the heart of Elliott's classification of AF-algebras, and the setting provided by games allows us to directly use the contents of \cite{Elliott.AF} to conclude the proof of Theorem~\ref{mainthm:K0toalg}. Indeed, rephrasing the contents of \cite{Elliott.AF} in terms of games, what Elliott shows with his intertwining argument, is precisely how Player II can devise a winning strategy for the infinite version of the $\PI$ game for the algebras of interest, granted a winning strategy for the infinite version of the $\EFD$ game for the $K_0$-groups.

In the second part of the paper, we use Theorem~\ref{mainthm:K0toalg} to complement the findings of \cite[Theorem~3(3)]{OmitAF}. There it is shown that there exist unital AF-algebras which are elementary equivalent (i.e. they agree on all first order sentences) but not isomorphic. In fact, much more is true: the first order theory provides a smooth invariant (\cite[Theorem~1.1]{FTT}), while the isomorphism relation between AF-algebras is not smooth (since the relation of isomorphism between countable, torsion free, rank one abelian groups, which is not smooth, is Borel reducible to it; see \cite[Theorem~3]{OmitAF} and \cite{Thomas.TorsionFree, FTT}). Using this difference in complexity, the authors of \cite{OmitAF} conclude that the relations $\equiv$ and $\cong$ must be different on unital AF-algebras, without providing concrete examples. Albeit commutative examples are not too hard to construct (and have been implicitly treated in \cite{EV.Sat}), no simple examples, that is with no non-trivial closed two-sided ideals, were known up to this point. We employ Theorem~\ref{mainthm:K0toalg} to fill this gap and actually obtain something stronger, by building a family of simple unital AF-algebras of arbitrarily high Scott rank, as defined in \cite{SRmetric}.

\begin{theoremi}[Corollary~\ref{corollary:scott}]\label{mainthm:highrank}
There exists a family of unital simple AF-algebras $\{A_\alpha\}_{\alpha < \omega_1}$ which are pairwise non-isomorphic and such that $A_{\alpha}\equiv_\alpha A_{\beta}$ whenever $\alpha<\beta$.
 \end{theoremi}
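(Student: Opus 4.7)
The plan is to apply Theorem~\ref{mainthm:K0toalg} to transfer the problem from AF-algebras to their $K_0$-invariants: by Elliott's classification it suffices to produce, for each $\alpha<\omega_1$, a countable simple dimension group with distinguished order unit $(G_\alpha, G_\alpha^+, u_\alpha)$, the family $\{G_\alpha\}_{\alpha<\omega_1}$ being pairwise non-isomorphic and satisfying $(G_\alpha,G_\alpha^+,u_\alpha)\equiv_{\omega\cdot\alpha}(G_\beta,G_\beta^+,u_\beta)$ for every $\alpha<\beta$. The AF-algebras $A_\alpha$ with $K_0(A_\alpha)\cong G_\alpha$ are then automatically pairwise non-isomorphic, simple (since simplicity of a unital AF-algebra is equivalent to simplicity of its dimension group), and satisfy $A_\alpha\equiv_\alpha A_\beta$ directly from Theorem~\ref{mainthm:K0toalg}.

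To construct such a family I would proceed by transfinite recursion on $\alpha$. The Effros--Handelman--Shen theorem presents any countable dimension group as a direct limit of finitely generated free abelian groups along a Bratteli-type diagram, and simplicity can be enforced by standard cofinality conditions on the bonding maps. Within this framework, the task reduces to designing diagrams whose limit dimension groups admit winning strategies for Player~II in the Dynamic Ehrenfeucht--Fra\"iss\'e game $\EFD_{\omega\cdot\alpha}$ between $G_\alpha$ and $G_\beta$ when $\alpha<\beta$, yet no back-and-forth scheme witnessing full isomorphism. Patterns for such constructions are classical in the discrete infinitary setting (inductive constructions producing countable structures of arbitrarily large Scott rank over suitable countable signatures), and the approach would be to carry one of them out in the category of simple dimension groups, using the direct-limit presentation to adjoin new elements at each stage while preserving unperforatedness, the Riesz property, and the absence of proper order ideals.

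The principal obstacle is to carry out this inductive construction while simultaneously controlling three constraints: the order, the distinguished order unit, and simplicity. The dimension-group language is richer than that of abelian groups, so Player~II's partial isomorphisms in $\EFD_{\omega\cdot\alpha}$ must respect the positive cones and the image of the unit, which heavily constrains the choice of bonding maps at each level; in turn, enforcing simplicity rules out naive direct sums of smaller examples and forces one to work within a single inductive limit, where local choices at finite levels have global consequences for the order ideal lattice. Once this balancing act is accomplished, the conclusion follows immediately: non-isomorphism at the level of $K_0$ propagates to the AF-algebras by Elliott's classification, whereas the $\omega\cdot\alpha$-equivalence between the dimension groups lifts through Theorem~\ref{mainthm:K0toalg} to the required $\equiv_\alpha$-equivalence between the $A_\alpha$.
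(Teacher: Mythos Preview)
Your reduction is exactly right: by Elliott's classification and Theorem~\ref{mainthm:K0toalg}, it suffices to exhibit pairwise non-isomorphic countable simple dimension groups $(G_\alpha,G_\alpha^+,u_\alpha)$ with $G_\alpha\equiv_{\omega\cdot\alpha}G_\beta$ for $\alpha<\beta$. This is precisely how the paper begins.

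The gap is that you do not actually construct the groups. You gesture at a transfinite recursion on Bratteli diagrams and at ``classical constructions producing countable structures of arbitrarily large Scott rank,'' but nowhere do you specify a diagram, a bonding map, or a back-and-forth strategy. The difficulties you flag (respecting the positive cone, the unit, and simplicity simultaneously) are real, and your proposal gives no indication of how they would be overcome; as it stands it is a restatement of the problem rather than a solution.

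The paper's construction is concrete and avoids a direct recursion on dimension groups altogether. For a compact space $X$ one takes $G_X=C(X,\mathbb{Q})$ (with $\mathbb{Q}$ discrete) ordered by the \emph{strict} order $f\ll g\iff f=g$ or $f(x)<g(x)$ for all $x$, with unit the constant function $1$. When $X=\gamma+1$ is a countable successor ordinal this is a countable simple dimension group, and the clopen algebra of $X$ is definably recoverable from $(G_X,\le)$, so distinct ordinals give non-isomorphic groups. A second transfer of strategies (entirely analogous to the proof of Theorem~\ref{mainthm:K0toalg}) shows that $\gamma+1\equiv_{\omega\cdot\alpha}\delta+1$ as linear orders implies $\mathcal{G}_{\gamma+1}\equiv_\alpha\mathcal{G}_{\delta+1}$ as unital ordered groups. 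One is thus reduced to linear orders, where Karp's theorem provides the needed examples: for $\varepsilon$ with $\omega^\varepsilon=\varepsilon$ one has $\varepsilon\equiv_\varepsilon\varepsilon\cdot\delta$ for every $\delta\neq 0$, and the enumeration $(\varepsilon_\alpha)_{\alpha<\omega_1}$ of such ordinals yields the desired family $\mathcal{G}_{\varepsilon_\alpha+1}$. The key idea you are missing is this two-step further reduction, from dimension groups to function spaces over ordinals, and from there to Karp's result on linear orders.
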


Theorem~\ref{mainthm:K0toalg}, along with the fact that dimension groups determine their AF-algebras up to isomorphism, allows to reduce the proof of Theorem~\ref{mainthm:highrank} to finding a family of simple dimension groups $\{\mathcal{G}_\alpha \}_{\alpha < \omega_1}$ which are pairwise non-isomorphic and verify $\mathcal{G}_\alpha \equiv_\alpha \mathcal{G}_\beta$ whenever $\alpha < \beta$. This is done in \S\ref{SS.groups}. The simple dimension groups that we consider are sets of continuous functions $C(\gamma + 1, \mathbb{Q})$ with the strict order, where $\gamma$ is a countable ordinal and $\mathbb{Q}$ is considered with the discrete topology. Both $\equiv_\alpha$ and the isomorphism relation between such groups are shown to tightly depend on the countable ordinals defining the domain. This permits to further reduce the problem to a question about countable linear orders, solved using a classical result of Karp (\cite{karp}).

Although Theorem~\ref{mainthm:highrank} and its proof in \S \ref{SS.groups} can be formulated with no explicit reference to Scott rank, these notions originally motivated the construction of the family of AF-algebras in \S\ref{SS.groups} (and, to some extent, Theorem~\ref{mainthm:K0toalg} as well). %In fact, such family witnesses that the Scott spectrum of the class of simple unital AF-algebras is unbounded below $\omega_1$ or, equivalently, that there exist simple unital AF-algebras of arbitrarily high Scott rank. \marginpar{AVi: I think this was already known, as the iso relation is not Borel}

The notions of {Scott sentence}, {Scott rank} and {Scott spectrum} are classically formulated for discrete infinitary logics, and we refer to \cite[Chapter 7]{van_games} for formally precise definitions of these concepts (the author uses the terminology \emph{Scott height}, rather than Scott rank). The \emph{Scott sentence} $\sigma_M$ of a structure $M$ is a formula coding the infinitary theory of $M$. In case $M$ is a countable $\mathcal{L}$-structure in a countable language $\mathcal{L}$, then $\sigma_M$ is a $\mathcal{L}_{\omega_1 \omega}$-sentence. In this case, the main feature of $\sigma_M$ is the following: if $N$ is another countable $\mathcal{L}$-structure satisfying $N \models \sigma_M$, then $M \cong N$. Scott's proof of the existence of a Scott sentence from \cite{scott} gives rise to the notion of \emph{Scott rank} of a structure $M$. Different definitions for the Scott rank of the structure $M$ have been considered in the literature (see e.g. \cite[\S 2.2]{marker} or \cite{scott.rank}), but in every case the Scott rank is roughly equal to the quantifier rank of $\sigma_M$ and thus measures the model-theoretic complexity of $M$.

In \cite{SRmetric} the authors generalize a large portion of Scott's analysis to the continuous framework. Suitable adaptations of Scott sentence $\sigma_M$ and Scott rank $\text{SR}(M)$ of a metric structure $M$ are introduced and it is proved that their behavior is similar to that of the classical setting.\footnote{The generalization of Scott sentence and Scott rank to the metric setting is a bit more delicate than what we make it look like in this introduction: there are some technical obstacles forcing the authors of \cite{SRmetric} to develop their notions restricting to certain classes of suitably uniform formulas. Luckily, these issues can be overcome (see \cite[\S 5]{SRmetric}), so we will ignore them in our discussion here.} According to \cite[Definition 3.6]{SRmetric}, the Scott rank of a separable structure $M$ verifies $\rk(\sigma_M) = \text{SR}(M) + \omega$, hence the existence of a separable structure $N$ not isomorphic to $M$, and such that $M \equiv_\alpha N$, immediately gives a lower bound on $\rk(\sigma_M) $, and therefore on $ \text{SR}(M)$, depending on $\alpha$. This argument can be applied to conclude that the AF-algebras produced in Theorem~\ref{mainthm:highrank} have arbitrarily high Scott rank.

Finally, we prove a partial converse to Theorem~\ref{mainthm:K0toalg}. Curiously, while in the classification of AF-algebras showing that $A \cong B$ implies $K_0(A) \cong K_0(B)$ is the easy part of the proof, the model-theoretic version of this implication seems to be more elusive. We obtain the following generalization to $\mathcal L_{\omega_1\omega}$ of \cite[Theorem~3.11.1]{Muenster}, where an analogous statement is proved for first order logic.

\begin{theoremi}[Theorem~\ref{thm:algbstoK_0}] \label{mainthm:reverse}
Let $A$ and $B$ be unital \cstar-algebras, let $\alpha<\omega_1$ and let $\mathcal{K}$ be the \cstar-algebra of the compact operators on a separable infinite-dimensional Hilbert space. Then
\[
A\otimes\mathcal K\equiv_{\omega + 2\cdot \alpha +2} B\otimes\mathcal K \Rightarrow (K_0(A),K_0(A)_+,[1_A]) \equiv_{ \alpha} (K_0(B),K_0(B)_+,[1_B]).
\]
\end{theoremi}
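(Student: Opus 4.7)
The plan is to define a syntactic translation $\phi\mapsto\phi^*$ sending each $\mathcal{L}_{\omega_1\omega}$-formula $\phi(g_1,\ldots,g_n)$ in the language of pointed ordered abelian groups to an $\mathcal{L}_{\omega_1\omega}$-formula $\phi^*(p_1,q_1,\ldots,p_n,q_n)$ in the language of \cstar-algebras, meant to be evaluated in $A\otimes\mathcal{K}$ on tuples of projections, with the property that
\[
A\otimes\mathcal{K}\models\phi^*(p_1,q_1,\ldots,p_n,q_n)\iff K_0(A)\models\phi([p_1]-[q_1],\ldots,[p_n]-[q_n]),
\]
and such that $\rk(\phi^*)\leq\omega+2\cdot\rk(\phi)+2$. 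Since every element of $K_0(A)$ is a formal difference $[p]-[q]$ for projections $p,q\in A\otimes\mathcal{K}$, and since the set of projections is definable in continuous logic (see e.g.~\cite[\S3.2]{Muenster}), this translation suffices to conclude the theorem by contraposition: any sentence of quantifier rank at most $\alpha$ distinguishing $K_0(A)$ from $K_0(B)$ would lift to one of rank at most $\omega+2\cdot\alpha+2$ distinguishing $A\otimes\mathcal{K}$ from $B\otimes\mathcal{K}$, contradicting the hypothesis.

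The atomic $K_0$-relations are encoded via Murray--von Neumann equivalence and orthogonalization of projections. For example, the equation $g_1=g_2$, meaning $[p_1]-[q_1]=[p_2]-[q_2]$ in $K_0(A)$, becomes the existence of an auxiliary projection $r$ and partial isometries witnessing $p_1\oplus q_2\oplus r\sim p_2\oplus q_1\oplus r$ inside $A\otimes\mathcal{K}$; the orthogonalization needed to realize the direct sums inside $A\otimes\mathcal{K}$ is possible because $A\otimes\mathcal{K}$ is stable, i.e.~$A\otimes\mathcal{K}\otimes\mathcal{K}\cong A\otimes\mathcal{K}$. The order relation $g\geq 0$, the ternary addition relation $g_1+g_2=g_3$, and the unit constant $g=[1_A]$ are handled in the same style. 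Crucially, each such atomic translation uses only a bounded, formula-independent finite number of nested existential quantifiers. Each quantifier $\sup_g$ or $\inf_g$ appearing in $\phi$ is then translated into two nested quantifiers (one for $p$ and one for $q$) over projections in $A\otimes\mathcal{K}$, with the restriction to projections implemented through the standard definable-set machinery. This duplication contributes $+2$ to the rank at each successor step of the induction on $\rk(\phi)$; limit steps pose no obstruction, and the absorption of the fixed finite cost of atomic translations into the $+\omega$ offset, together with the trailing $+2$ for the outermost pair of quantifiers, yields the announced rank bound.

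The principal technical obstacle will be the precise quantifier-rank bookkeeping. The target bound $\omega+2\cdot\rk(\phi)+2$ is tight in the sense that, without uniformity in the complexity of atomic translations, the induction would instead produce a bound growing like $\omega\cdot\rk(\phi)$; securing this uniformity relies on the stability of $A\otimes\mathcal{K}$ making orthogonalization and Murray--von Neumann equivalence of projections expressible by a fixed finite number of \cstar-algebra quantifiers, and on the fact that projections form a definable set witnessed by formulas of bounded rank. A secondary delicate point is that the Grothendieck construction defining $K_0(A)$ a priori requires a stabilizing projection of unbounded size, but this is absorbed at no rank cost by a single existential quantifier over projections in $A\otimes\mathcal{K}$. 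Once the uniformity of the atomic translations is verified, the inductive construction of $\phi^*$ and the verification of the semantic equivalence proceed routinely.
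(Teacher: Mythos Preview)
Your approach is essentially the paper's: a syntactic translation with rank control. The paper factors it in two steps---first from $\mathcal{L}_{K_0,u}$ to the language of the Murray--von Neumann semigroup $(V(A),[1_A])$ (the Grothendieck step, accounting for the $2\alpha+2$), then from there to $\mathcal{L}_{\mathrm{C}^*,c}$ (accounting for the $+\omega$)---while you merge these into one pass.

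Two points to tighten. First, the atomic cost is \emph{not} ``formula-independent'': an atomic relation $t_1=t_2$ can involve arbitrarily many summands (the number of occurrences of $u$ in a term is unbounded), so its translation needs unboundedly many quantifiers for the orthogonalization, and this is precisely why the offset is $+\omega$ rather than a fixed finite constant. Your later remark about absorbing the atomic cost into $+\omega$ shows you understand this, but the earlier sentence contradicts it. Second, the continuous-logic issues you label ``routine'' are where the paper does the real work. To translate $\neg\phi$ you need a uniform gap property (there is $\delta>0$ with $\psi^{A\otimes\mathcal K}(\bar p)<\delta\Rightarrow\psi^{A\otimes\mathcal K}(\bar p)=0$, so that $\delta\dotminus\psi$ serves as the negation); the paper secures this via an Ulam-stability lemma for approximate families of orthogonal projections. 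And for $\bigwedge_n\phi_n$ to yield a legitimate metric $\mathcal{L}_{\omega_1\omega}$-formula, the translations $\phi_n^*$ must share a common modulus of continuity and bounded range; the paper arranges this by making every translated formula $1$-Lipschitz. Neither issue is fatal to your outline, but both must be addressed explicitly for the induction to go through.
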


We ultimately obtain the following correspondence between elementary equivalence of AF-algebras (metric structures) and elementary equivalence of unital ordered groups (discrete structures).
In the following corollary, we suggestively write $\equiv_{ \alpha}^{\PI}$ for the statement that Player~II has a winning strategy for the game $\PI_\alpha(A,B).$
\begin{corollaryi}[Corollary~\ref{thm:combined}] \label{mainthm:combined}
	Let $A$ and $B$ be unital AF-algebras, let $\alpha<\omega_1$ be a non-zero ordinal which is such that $\alpha = \omega^\omega \cdot \beta,$ for some $\beta  \leq \alpha.$  Then the following are equivalent
	
	\begin{enumerate}
		\item\label{el.eq1D} $A \equiv_{ \alpha}^{\PI} B$;
		\item\label{el.eq2D} $A\otimes\mathcal K\equiv_\alpha B\otimes\mathcal K$;
		\item\label{el.eq3D} $(K_0(A),K_0(A)_+,[1_A]) \equiv_{ \alpha} (K_0(B),K_0(B)_+,[1_B]).$
	\end{enumerate}
\end{corollaryi}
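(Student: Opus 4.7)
The plan is to close the cycle $(3) \Rightarrow (1) \Rightarrow (2) \Rightarrow (3)$, invoking Theorem~\ref{mainthm:K0toalg}, Theorem~\ref{mainthm:reverse}, and Proposition~\ref{prop:PI}. Everything rests on two absorption identities for the ordinal $\alpha = \omega^\omega \cdot \beta$ with $\beta \geq 1$. First, by associativity of ordinal multiplication and $\omega^{1+\omega} = \omega^\omega$, one has $\omega \cdot \alpha = (\omega \cdot \omega^\omega) \cdot \beta = \omega^\omega \cdot \beta = \alpha$. Second, $\alpha$ is a limit ordinal, and for every $\gamma < \alpha$ one has $\omega + 2 \cdot \gamma + 2 < \alpha$: write $\gamma = \omega^\omega \cdot \beta' + \delta$ with $\beta' < \beta$ and $\delta < \omega^\omega$, apply left-distributivity to get $2 \cdot \gamma = \omega^\omega \cdot \beta' + 2 \cdot \delta$, and observe $\omega + 2 \cdot \gamma + 2 = \omega^\omega \cdot \beta' + 2 \delta + 2 < \omega^\omega \cdot (\beta' + 1) \leq \alpha$, using additive indecomposability of $\omega^\omega$ and $\beta' + 1 \leq \beta$.

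The implication $(3) \Rightarrow (1)$ is essentially Theorem~\ref{mainthm:K0toalg}: since the $K_0$-groups are discrete structures, Theorem~\ref{thm:EFD} identifies (3) with Player~II winning $\EFD_\alpha(K_0(A), K_0(B))$, which by $\omega \cdot \alpha = \alpha$ is the same as winning $\EFD_{\omega \cdot \alpha}(K_0(A), K_0(B))$; the internal content of the proof of Theorem~\ref{mainthm:K0toalg}, namely Elliott's intertwining recast as a transfer of strategies, converts this into a winning strategy for Player~II in $\PI_\alpha(A,B)$, which is (1). The implication $(2) \Rightarrow (3)$ runs as follows: because $\alpha$ is a limit and a single quantifier raises the rank by exactly one, any $\mathcal{L}_{\omega_1 \omega}$-sentence of quantifier rank $\leq \alpha$ is built out of sentences of quantifier rank $< \alpha$ via countable Boolean operations, whence $K_0(A) \equiv_\alpha K_0(B)$ reduces to $K_0(A) \equiv_\gamma K_0(B)$ for every $\gamma < \alpha$. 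Fixing such a $\gamma$, the estimate $\omega + 2 \cdot \gamma + 2 < \alpha$ ensures that (2) implies $A \otimes \mathcal{K} \equiv_{\omega + 2 \cdot \gamma + 2} B \otimes \mathcal{K}$, and Theorem~\ref{mainthm:reverse} yields $K_0(A) \equiv_\gamma K_0(B)$, as required.

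For $(1) \Rightarrow (2)$, my plan is to promote a winning strategy for Player~II in $\PI_\alpha(A,B)$ to one in $\PI_\alpha(A \otimes \mathcal{K}, B \otimes \mathcal{K})$; Proposition~\ref{prop:PI} then delivers $A \otimes \mathcal{K} \equiv_\alpha B \otimes \mathcal{K}$. When Player~I plays an element $x \in A \otimes \mathcal{K}$, Player~II first approximates it, to the tolerance required by the round, by a finite sum $\sum_i a_i \otimes k_i$ with $a_i \in A$ and with every $k_i$ lying in a common finite-dimensional subalgebra $\mathcal{K}_0 \subseteq \mathcal{K}$; feeding $(a_i)_i$ into the underlying strategy on $\PI_\alpha(A,B)$ returns a matching tuple $(b_i)_i \subseteq B$, and Player~II responds with $\sum_i b_i \otimes k_i$. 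The round is witnessed by the partial isomorphism $\phi \otimes \mathrm{id}_{\mathcal{K}_0} \colon A_0 \otimes \mathcal{K}_0 \to B_0 \otimes \mathcal{K}_0$, where $\phi \colon A_0 \to B_0$ is the partial isomorphism of finite-dimensional subalgebras produced by the strategy on $\PI_\alpha(A,B)$. I expect the main obstacle to be the careful metric bookkeeping needed so that the approximations involved in this lift do not degrade the rank $\alpha$, ensuring that the lifted strategy genuinely witnesses $A \otimes \mathcal{K} \equiv_\alpha^{\PI} B \otimes \mathcal{K}$ at the correct ordinal.
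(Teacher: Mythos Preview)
Your cycle and your ingredients match the paper's proof exactly. The one point to sharpen is your sketch of $(1)\Rightarrow(2)$: when Player~I plays some $x\in A\otimes\mathcal K$ and you approximate by $\sum_{i<n}a_i\otimes k_i$, feeding the tuple $(a_i)_{i<n}$ into the base game costs $n$ separate rounds, not one, and $n$ is unbounded. Hence the transfer you describe is really from $\PI_{\omega\cdot\alpha}(A,B)$ to $\PI_\alpha(A\otimes\mathcal K,B\otimes\mathcal K)$ (this is precisely Proposition~\ref{P.compacts}), and what closes the gap is your absorption identity $\omega\cdot\alpha=\alpha$, which you set up but did not invoke here. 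The obstacle is therefore ordinal bookkeeping, not metric; the metric estimate needed is the elementary Lemma~\ref{L.eps}.
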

Despite relating $\equiv_{ \alpha}^{\PI}$ to $\equiv_\alpha$, Corollary \ref{mainthm:combined} gives no additional
information on the possibility of characterizing $A \equiv_\alpha B$ in terms of winning strategies for $\PI_\alpha(A,B)$, even
when restricting to the class of AF-algebras. This gap is summarized in the following question.

%The reader should take note of two important caveats when analyzing this corollary. First of all, the theorem does not apply for all countable $\alpha,$ but only for a closed unbounded set of ordinals $\alpha.$ Secondly, the notion of elementary equivalence between AF-algebras in statement \eqref{el.eq1D}  of Corollary \ref{mainthm:combined} is the relation 
%$\equiv_{ \alpha}^{\PI},$ which, as already remarked, we only know to be at least as strong as the elementary equivalence relation $\equiv_{ \alpha}$.

\begin{question}\label{que:eleq}
	Let $\alpha < \omega_1$ and let $A$ and $B$ be metric structures (e.g. \cstar-algebras) with $A \equiv_{ \alpha} B$. Under what conditions on $\alpha, A, B$ can one infer that Player II has a winning strategy for 
the game $\PI_\alpha(A,B)$? More generally, can $\equiv_\alpha$ be expressed in terms of existence of winning strategies
for games (possibly different from our $\PI_\alpha$) also in the metric setting?
\end{question}

Regardless of the answer to Question \ref{que:eleq},
Corollary~\ref{thm:combined} suggests that $\equiv_{ \alpha}^{\PI}$ might be a natural and interesting notion of infinitary elementary equivalence between \cstar-algebras (or at least AF-algebras).

As mentioned at the beginning of this introduction, the classification of AF-algebras by $K_0$-groups was the first step in a long series of results which eventually lead to the classification of a wide class of suitably amenable \cstar-algebras. Since Elliott's proof in \cite{Elliott.AF}, the methods and the techniques employed in such classification have been greatly refined and improved, and the invariant itself has evolved to a much more sophisticated object (known as the \emph{Elliott Invariant}), of which the $K_0$-group is just a part. Nevertheless, the core strategy for most results in the Elliott Classification Program has remained the same. These proofs are often composed of an \emph{existence theorem}, claiming that every morphism between the invariants can be lifted to a $^*$-homomorphism between the corresponding algebras (see e.g. Lemma~\ref{lemma:classification}\ref{item:ex}), a \emph{uniqueness theorem}, granting that $^*$-homomorphisms that induce the same maps on the invariants are approximately unitarily equivalent (see e.g. Lemma~\ref{lemma:classification}\ref{item:uni}), and finally an \emph{intertwining argument}, putting together the two previous steps to obtain the desired classification result. This template combines surprisingly well with the model-theoretic framework provided by games, as witnessed by the proof of Theorem~\ref{mainthm:K0toalg} in \S\ref{SS.Strategies}. It seems thus natural to ask for what other classes of classifiable \cstar-algebras this approach can be employed, with the final goal in mind of understanding how well the Elliott invariant of a classifiable \cstar-algebra captures the model-theoretic features of the algebra, and vice versa.

A first step to verify this intuition would be considering Kirchberg-Phillips classification of purely infinite \cstar-algebras (\cite{kirch,phillips,Gabe}), and investigate whether those results could fuel an analogue of Theorem~\ref{mainthm:K0toalg} for purely infinite classifiable \cstar-algebras.

\begin{question}
Fix $\alpha < \omega_1$. Does there exist $\theta(\alpha)$ such that for every unital, purely infinite, simple, nuclear, separable \cstar-algebras $A$ and $B$ satisfying the UCT we have that
\[
K_0(A) \equiv_{\theta(\alpha)} K_0(B) \text{ and } K_1(A) \equiv_{\theta(\alpha)} K_1(B)
\Rightarrow A \equiv_\alpha B?
\]
\end{question}

The paper is organized as follows. \S\ref{S.prelim} is dedicated to preliminaries; most importantly, we introduce the Partial Isomorphism game (Definition \ref{def.PI}). In \S\ref{SS.Strategies} we prove Theorem~\ref{mainthm:K0toalg}, while \S\ref{SS.groups} is devoted to the construction of the family of simple AF-algebras promised in Theorem~\ref{mainthm:highrank}. In \S\ref{S.AlgtoK0} we prove Theorem \ref{mainthm:reverse} and Corollary \ref{mainthm:combined}, and we obtain a transfer of strategies from the Partial Isomorphism game played with a pair of given \cstar-algebras to the game played with their stabilizations. Finally, in the appendix we give a concrete description of the Bratteli diagrams associated to the AF-algebras built in \S\ref{SS.groups}.

\section{Preliminaries}\label{S.prelim}
We assume that the reader is familiar with the basics of the theory of \cstar-algebras (see \cite{ilijas_book, murphy} for an essential introduction on the topic), as well as the basics of model theory for metric structures and its formulation in the setting of \cstar-algebras (standard references are \cite{BYBHU} and \cite{FHS.II}; see also \cite{Muenster}).

We briefly recall some elementary notions in infinitary logic, both in the discrete and the continuous setting (\S\ref{ss.mtd} and \S\ref{ss.mtc}), we define the \emph{Partial Isomorphism Game} (Definition \ref{def.PI}), and we provide some essential background on AF-algebras and their dimension groups (\S\ref{ss.AFK0}).

\subsection{Model theory: the Discrete Setting} \label{ss.mtd}
\subsubsection{Dynamic Ehrenfeucht-Fra\"iss\'e Games}
We refer to \cite{van_games} for a thorough and rigorous introduction to games in model theory. In this paper we are mainly concerned with the dynamic version of Ehrenfeucht-Fra\"iss\'e games, which we simply call EFD-games (see \cite[\S 7.3]{van_games}), and their adaptation to the metric setting, which we introduce in Definition \ref{def.PI}. %We briefly recall the definition of EFD-games below (see also \cite[\S 7.3]{van_games}).

If $\mathcal L$ is a language for discrete structures, $A$ is an $\mathcal L$-structure and $(a_0,\ldots,a_{k-1})\in A^k$, then $\langle a_0,\ldots,a_{k-1}\rangle_\mathcal{L}$ is the $\mathcal L$-structure generated by the $a_i$'s.

\begin{definition}[The Dynamic EF Game] \label{def:EFD}
Let $\mathcal L$ be a language and $A$ and $B$ two $\mathcal L$-structures. Let $\alpha$ be an ordinal. The game $\EFD_\alpha(A,B)$ is played as follows. Set $\alpha_{-1}=\alpha$. In round $k\in\N$, the players first check if $\alpha_{k-1} > 0$. If this is the case, then
\begin{enumerate}
\item Player I either plays a pair $(\alpha_k, a_k)$ or a pair $(\alpha_k, b_k)$, where $\alpha_k < \alpha_{k-1}$, and $a_k \in A$ or $b_k\in B$, respectively.
\item Player II answers with an element $b_k \in B$ if Player I played $a_k \in A$, while they answer with some $a_k \in A$ if Player I played $b_k \in B$.
\end{enumerate}
If $\alpha_{k-1} = 0$ the game ends, and Player II wins if the map $a_i\mapsto b_i$ induces an isomorphism between $\langle a_0,\ldots,a_{k-1}\rangle_\mathcal{L}$ and $\langle b_0,\ldots,b_{k-1}\rangle_\mathcal{L}$; otherwise Player I wins.
\end{definition}

Fix an ordinal $\alpha$ and two $\mathcal L$-structures $A$ and $B$. Given $k \in \bbN$,
a \emph{$k$-position} for $\EFD_\alpha(A,B)$ is a sequence
\[
\bar p = ((\alpha_0, a_0, b_0), \dots, (\alpha_{k-1}, a_{k-1}, b_{k-1}))
\]
obtained from a match of $\EFD_\alpha(A,B)$, and representing the status of the game at the beginning of round $k$. In particular, $\bar p$ is such that $\alpha > \alpha_0 > \dots > \alpha_{k-1}$, that $a_j \in A$ and that $b_j \in B$ for all $j < k$. In case $k=0$ we set $\bar p$ to be $(\alpha, \emptyset, \emptyset)$, the position that simply records the ordinal $\alpha$, which is sometimes referred to as the \emph{empty position}. Let $\mathcal{P}_k(\EFD_\alpha(A,B))$ be the set of all $k$-positions for $\EFD_\alpha(A,B)$. A \emph{strategy} $\bar \sigma$ for Player II is a sequence of maps
\[
\sigma_k \colon\mathcal{P}_k(\EFD_\alpha(A,B)) \times\alpha \times A \sqcup B \to A \sqcup B, \ \forall k \in \N,
\]
such that $\sigma_k(\bar p, \beta, c)$ belongs to $A$ (respectively, to $B$) whenever $\bar p = (\alpha_j, a_j, b_j)_{j < k}\in \mathcal{P}_k(\EFD_\alpha(A,B))$ with $\beta < \alpha_{k-1}$ and $c \in B$ (respectively, $c \in A$). Player II has a \emph{winning strategy} for $\EFD_\alpha(A,B)$, in symbols $\text{II} \uparrow \EFD_\alpha(A,B)$, if there is a strategy $\bar \sigma = (\sigma_k)_{k\in \N}$ such that Player II wins every match of $\EFD_\alpha(A,B)$ where they play their moves according to $\bar \sigma$. That is, if $\bar p = (\alpha_j, a_j, b_j)_{j < k} \in \mathcal{P}_k(\EFD_\alpha(A,B))$ is such that
\begin{enumerate}
\item $\alpha_{k-1} = 0$,
\item $b_j = \sigma_j((\alpha_i, a_i, b_i)_{i < j}, (\alpha_j, a_j))$, for all $j < k$ such that Player I played $(\alpha_j, a_j)$ in round $j$, 
\item $a_j = \sigma_j((\alpha_i, a_i, b_i)_{i < j}, (\alpha_j, b_j))$ for all $j < k$ such that Player I played $(\alpha_j, b_j)$ in round $j$,
\end{enumerate}
then Player II wins the match that ends in position $\bar p$. Analogously, given a $k$-position $\bar p$, we say that Player II has a \emph{winning strategy from position $\bar p$} if there exists a strategy $\bar \sigma$ such that Player II wins every match of the game $\EFD_\alpha(A,B)$ which at the beginning of round $k$ is in position $\bar p$, and where Player II starts playing according to $\bar \sigma$ from that point on.

The notions of strategies and winning strategies can be also defined for Player I, but we will be mostly interested in games where Player II can win.

The ordinal $\alpha$ in Definition \ref{def:EFD} plays the role of a clock, which is refreshed every turn by Player I, and determines the end of the game when it reaches zero. Note that $\EFD_\alpha$ always ends in a finite number of moves, even if the ordinal $\alpha$ is infinite.

\subsubsection{Discrete Infinitary Logic} \label{ss.dil}
Given a language $\mathcal{L}$, the set of $\mathcal{L}_{\omega_1 \omega}$-formulas expands first order logic by allowing, given a countable set $\{\phi_i(\bar x)\}_{i \in \N}$ of formulas in a finite set of variables $\bar x$, countable conjunctions $\bigwedge_i \phi_i(\bar x)$ and disjunctions $\bigvee_i \phi_i(\bar x)$ of formulas. 

The quantifier rank of $\mathcal L_{\omega_1\omega}$-formulas is defined by induction as follows
\begin{enumerate}
\item $\rk(\phi(\bar x))=0$ if $\phi(\bar x)$ is atomic;
\item $\rk(\neg \phi(\bar x))=\rk(\phi(\bar x))$;
\item $\rk (\bigwedge_i\phi_i(\bar x))=\rk(\bigvee_i\phi_i(\bar x))=\sup_i\rk(\phi_i(\bar x))$;
\item $\rk(\exists y\phi(\bar x,y))=\rk(\forall y\phi(\bar x,y))=\rk(\phi(\bar x,y))+1$.
\end{enumerate}
The quantifier rank of an $\mathcal{L}_{\omega_1 \omega}$-formula is always a countable ordinal. Given a language $\mathcal{L}$, the relation $\equiv_\alpha$ between $\mathcal{L}$-structures, for an ordinal $\alpha$, is defined as\footnote{The definition of $\equiv_\alpha$ customarily requires that the two structures agree on all $\mathcal{L}_{\infty \omega}$-sentences of rank at most $\alpha$. This is not a significant difference in our context, since in this paper we will only consider countable structures.} 
\[
A\equiv_\alpha B\text{ if and only if $A$ and $B$ agree on all $\mathcal{L}_{\omega_1\omega}$-sentences $\phi$ with $\rk(\phi) \le \alpha$.}
\]
The equivalence relations $\equiv_\alpha$ are tightly related to the $\EFD$-games:
\begin{theorem}[{\cite[Theorem~7.47]{van_games}}]\label{thm:EFD}
Let $\mathcal L$ be a countable language and let $\alpha < \omega_1$. Let $A$ and $B$ be countable $\mathcal L$-structures. Then $A\equiv_\alpha B$ if and only if $\text{II} \uparrow\EFD_\alpha(A,B)$.
\end{theorem}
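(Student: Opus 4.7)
The plan is to proceed by transfinite induction using an intermediate back-and-forth characterization. For each ordinal $\beta$, define a relation $\sim_\beta$ on $\bigcup_k(A^k\times B^k)$: set $\bar a\sim_0\bar b$ iff $a_i\mapsto b_i$ induces an $\mathcal L$-isomorphism $\langle a_0,\ldots,a_{k-1}\rangle_\mathcal{L}\to\langle b_0,\ldots,b_{k-1}\rangle_\mathcal{L}$, and for $\beta>0$ declare $\bar a\sim_\beta\bar b$ when, for every $\gamma<\beta$, both the \emph{forth} clause (every $a\in A$ has some $b\in B$ with $\bar a a\sim_\gamma\bar b b$) and the symmetric \emph{back} clause hold. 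Unfolding $\EFD_\alpha(A,B)$ one round at a time, a direct induction on $\beta$ shows that Player~II has a winning strategy from a $k$-position $((\beta_j,a_j,b_j))_{j<k}$ with residual clock at least $\beta$ if and only if $\bar a\sim_\beta\bar b$: Player~I's choice of $\gamma<\beta$ and an element in $A$ or $B$ corresponds exactly to demanding a witness to the forth or back clause at level $\gamma$, and the end-condition at clock $0$ is $\sim_0$.

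To tie $\sim_\beta$ to $\equiv_\beta$, introduce Scott-type formulas $\phi^\beta_{\bar a}(\bar x)$ on $A$: let $\phi^0_{\bar a}(\bar x)$ be the conjunction of all atomic and negated atomic $\mathcal L$-formulas satisfied by $\bar a$, and recursively set
\[
\phi^\beta_{\bar a}(\bar x)\;=\;\bigwedge_{\gamma<\beta}\Bigl(\bigwedge_{a\in A}\exists y\,\phi^\gamma_{\bar a a}(\bar x,y)\;\wedge\;\forall y\bigvee_{a\in A}\phi^\gamma_{\bar a a}(\bar x,y)\Bigr).
\]
Countability of $\mathcal L$ and $A$ keeps $\phi^\beta_{\bar a}$ inside $\mathcal L_{\omega_1\omega}$, and a routine induction gives $\rk(\phi^\beta_{\bar a})\le\beta$ together with $A\models\phi^\beta_{\bar a}(\bar a)$. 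A parallel induction on $\beta$ then yields the equivalence of (i) $\bar a\sim_\beta\bar b$, (ii) $B\models\phi^\beta_{\bar a}(\bar b)$, and (iii) $A$ and $B$ agree on every $\mathcal L_{\omega_1\omega}$-formula of rank $\le\beta$ evaluated at $(\bar a,\bar b)$. Here (i)$\Leftrightarrow$(ii) unpacks the very definition of $\phi^\beta_{\bar a}$ (the conjunct encodes forth, the universal-disjunct encodes back), (iii)$\Rightarrow$(ii) is immediate from $\rk(\phi^\beta_{\bar a})\le\beta$ and $A\models\phi^\beta_{\bar a}(\bar a)$, and (ii)$\Rightarrow$(iii) is proved by a secondary induction on the construction of $\chi$ with $\rk(\chi)\le\beta$, using the forth/back clauses at the appropriate $\gamma<\beta$ to handle $\exists$ and $\forall$ and closure under countable Boolean operations to handle $\bigwedge$ and $\bigvee$.

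Taking $k=0$ and $\beta=\alpha$ chains the equivalences into
\[
\text{II}\uparrow\EFD_\alpha(A,B)\;\iff\;\emptyset\sim_\alpha\emptyset\;\iff\;A\equiv_\alpha B,
\]
which is the theorem. The main technical point is rank control: the recursion defining $\phi^\beta_{\bar a}$ must stay inside $\mathcal L_{\omega_1\omega}$ with $\rk(\phi^\beta_{\bar a})\le\beta$ and with $\phi^\beta_{\bar a}$ expressive enough to entail every rank-$\le\beta$ formula satisfied by $\bar a$. This is precisely where the countability hypotheses on $\mathcal L$, $A$, and $B$ enter; once this bookkeeping is handled, both directions of the equivalence run symmetrically through the back-and-forth relations.
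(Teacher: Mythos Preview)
The paper does not supply its own proof of this theorem: it is simply quoted as \cite[Theorem~7.47]{van_games} and used as a black box. So there is nothing in the paper to compare against.

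That said, your sketch is the standard argument (essentially the one in V\"a\"an\"anen's book) and is correct. The induction tying $\sim_\beta$ to winning strategies from a position with residual clock $\beta$ is exactly the one-round unfolding of the $\EFD$ game, and the Scott-type formulas $\phi^\beta_{\bar a}$ do have rank $\le\beta$ under the paper's rank conventions (the outer conjunction over $\gamma<\beta$ contributes $\sup_{\gamma<\beta}(\gamma+1)\le\beta$). Two small points worth tightening if you write this out in full: (1) be explicit that ``residual clock at least $\beta$'' really means ``$\alpha_{k-1}=\beta$'' in the paper's notation, since the induction is on that value; (2) for the base case $\sim_0$, the equivalence between ``same atomic/negated-atomic type'' and ``$a_i\mapsto b_i$ extends to an isomorphism of generated substructures'' uses that the atomic diagram in a language with function symbols determines the generated substructure --- this is routine but should be stated, as the paper's $\mathcal L_{K_0,u}$ does have a function symbol.
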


\subsection{Model Theory: the Continuous Setting} \label{ss.mtc}
\subsubsection{Metric Infinitary Logic} \label{sss.metricinf}
The majority of the metric structures considered in this paper are unital \cstar-algebras (with the exception of \S \ref{S.AlgtoK0}). In the context of continuous model theory, unital \cstar-algebras are customarily defined as one-sorted structures with multiple domains of quantification $D_n$, each one interpreted as the norm $n$-ball centered at 0, in the language $\mathcal{L}_{\mathrm{C}^\ast}$, containing symbols for each algebraic operation $+$, $\cdot$ and $^*$, constant symbols 0 and 1, and unary function symbols $\lambda$ for every $\lambda \in \mathbb{C}$. The distance on a \cstar-algebra is determined by the \cstar-algebraic norm $\lVert \cdot \rVert$. We refer to \cite[\S 2.3.1]{FHS.II} and \cite[\S 2.1]{Muenster} for a rigorous and complete introduction to this setup. All unital \cstar-algebras appearing in this paper are implicitly considered as $\mathcal{L}_{\rm C^\ast}$-structures.

Some of the definitions and preliminary results of this section also apply outside the realm of \cstar-algebras,
so we state them in the broader framework of general metric structures from \cite{BYBHU}.
Terms and first order formulas are defined by recursion on the complexity as in \cite[\S 3]{BYBHU}, using continuous
functions as connectives and $\inf$ and $\sup$ as quantifiers. Given a formula $\phi(x_0, \dots, x_{n-1})$ and a metric structure $M$ in the same language, we let $\phi^M \colon M^n \to \mathbb{R}$ denote the evaluation of $\phi$ on $M$, as defined in \cite[Definition 3.3]{BYBHU}. Each formula $\phi(\bar x)$ comes equipped with a modulus of continuity (see \cite[Definition 2.1]{SRmetric}) $\Delta_\phi\colon \mathbb{R}^n \to \mathbb{R}^+$ and an interval $I_\phi \subseteq \mathbb{R}$. For every metric structure $M$ with a distance $d$ the map $\phi^M$ is a uniformly continuous function such that
\[
\lvert \phi^M(\bar a) - \phi^M(\bar b) \rvert < \Delta_\phi(d(a_0,b_0), \dots, d(a_{n-1}, b_{n-1})), \ \forall \bar a, \bar b \in M^n,
\]
and such that the range of $\phi^M$ is contained in $I_\phi$.

When defining $\mathcal{L}_{\omega_1 \omega}$-formulas in continuous logic, we want to make sure that their interpretation is still bounded and uniformly continuous. For this reason, following the setup developed in \cite{BYI.MTForcing, SRmetric}, we only allow for infinite conjunctions and disjunctions of sufficiently uniform sets of formulas. In particular, suppose that $\{ \phi_i(\bar x) \}_{i \in \N}$ is a family of formulas in a finite set of variables $\bar x$, for which there is a modulus of continuity $\Delta$ and an interval $I \subset \mathbb{R}$ such that $\Delta_{\phi_i} \le \Delta$ and $I_{\phi_i} \subseteq I$ for every $i \in \N$. Then
\[
\phi(\bar x) = \bigwedge_i \phi_i(\bar x), \ \psi(\bar x) = \bigvee_i \phi_i(\bar x),
\]
are both formulas and their interpretation for a tuple $\bar a \in M^n$ is given by
\[
\phi^M(\bar a) := \inf_i \phi_i^M(\bar a) , \ \psi^M(\bar a) := \sup_i \phi_i^M(\bar a).
\]
This seemingly innocuous restriction leads to various non-trivial obstacles of technical nature when trying to adapt some of the classical arguments in infinitary logic to the continuous setting (the contents of \cite{SRmetric} provide a good example of this).

\begin{remark}
The setup of \cite{BYBHU} and \cite{SRmetric}, which is our main reference for infinitary logic of metric structures, formalizes continuous model theory for multi-sorted bounded structures. On the other hand, in the literature \cstar-algebras are usually presented as one-sorted structures, and their unboundedness is dealt with by introducing domains of quantification $D_n$, each one representing the ball of all elements of norm at most $n$. This difference is mainly formal, and there are a number of remedies for this discrepancy. For instance, the structure of a \cstar-algebra can be completely recovered from its unit ball (where, instead of the operation $+$ we would need to consider $\frac{x+y} {2}$), an object that perfectly fits the framework of \cite{BYBHU, SRmetric}. We will thus apply various definitions and results from \cite{SRmetric} to $\mathcal{L}_{\rm C^\ast}$-structures with this proviso in mind, while still treating \cstar-algebras following the setup from \cite{FHS.II, Muenster}, which has the main advantage of allowing quantification and terms outside the unit ball. To give a concrete example, given a family of $\mathcal{L}_{\rm C^\ast}$-formulas $\{ \phi_i(\bar x) \}_{i \in \N}$, the formulas $\bigwedge_i \phi_i(\bar x)$ and $\bigvee_i \phi_i(\bar x)$ are well-defined if and only if there is a modulus of continuity $\Delta$ and an interval $I \subset \mathbb{R}$ such that, for every $ i \in \bbN$, we have $\Delta^1_{\phi_i} \le \Delta$ and $I^1_{\phi_i} \subseteq I$, where $\Delta^1_{\phi_i}$ and $I^1_{\phi_i}$ are the modulus of continuity and interval associated to $\phi_i$ on the domain $D_1$.
\end{remark}

Similarly to the discrete setting, the rank of an $\mathcal L_{\omega_1\omega}$-formula $\phi(\bar x)$ is defined by induction as follows
\begin{enumerate}
\item $\rk(\phi(\bar x))=0$ if $\phi(\bar x)$ is atomic;
\item if $f\colon\mathbb R^n\to\mathbb R$ is a uniformly continuous function, then $\rk(f(\phi_0(\bar x),\ldots,\phi_{n-1}(\bar x)))=\max_i\rk(\phi_i)(\bar x)$;
\item\label{clause3} $\rk (\bigwedge_i\phi_i(\bar x))=\rk(\bigvee_i\phi_i(\bar x))=\sup_i\rk(\phi_i(\bar x))$;\item $\rk(\inf_y\phi(\bar x,y))=\rk(\sup_y\phi(\bar x,y))=\rk(\phi(\bar x,y))+1$.
\end{enumerate}

If $A$ and $B$ are metric structures in the same language, let 
\[
A\equiv_\alpha B\text{ if and only if }\phi^A=\phi^B\text{ for all $\mathcal{L}_{\omega_1 \omega}$-sentences $\phi$ with } \rk(\phi) \le \alpha.
\]

\subsubsection{The Partial Isomorphism Game}
In this section we introduce an analogue of the EFD-game in Definition \ref{def:EFD} which is suitable for metric structures.

If $\mathcal L$ is a language for metric structures, $A$ is an $\mathcal L$-structure and $(a_0,\ldots,a_{k-1})\in A^k$, we denote by $\langle a_0,\ldots,a_{k-1}\rangle_\mathcal{L}$ the $\mathcal L$-structure generated by the $a_i$'s. In case $\mathcal{L} = \mathcal{L}_{\rm C^\ast}$ and $A$ is a unital \cstar-algebra, $\langle a_0,\ldots,a_{k-1}\rangle_{\mathcal{L}_{\rm C^\ast}}$ is the \cstar-algebra generated by $1_A, a_0, \dots, a_{k-1}$.

\begin{definition}[The Partial Isomorphism Game] \label{def.PI}
Let $\mathcal L$ be a language and $A$ and $B$ two metric $\mathcal L$-structures, with distances $d_A$ and $d_B$ respectively. Let $\alpha$ be an ordinal. The game $\PI_\alpha(A,B)$ is played as follows. Set $\alpha_{-1} = \alpha$. In round $k \in \N$ the players first check if $\alpha_{k-1} > 0$. If this is the case, then 
\begin{enumerate}
\item Player I plays a triple $(\alpha_k, c_k,\e_k)$ where $\alpha_k < \alpha_{k-1}$, $\e_k > 0$, and $c_k \in A \sqcup B$.
\item Player II answers with a pair $( a_k, b_k) \in A \times B$ such that $d_A( a_k,  c_k) < \e_k$ if $c_k \in A$,
or such that $d_B( b_k , c_k ) < \e_k$ if $c_k \in B$.
\end{enumerate}
If $\alpha_{k-1} = 0$ the game ends, and Player II wins if the map $ a_i\mapsto b_i$ induces an isomorphism between $\langle a_0,\ldots, a_{k-1}\rangle_\mathcal{L}$ and $\langle b_0,\ldots, b_{k-1}\rangle_\mathcal{L}$; otherwise Player I wins.
\end{definition}

The terminology and notation introduced for EFD-games (e.g. position, strategy, winning strategy, $\text{II} \uparrow\PI_\alpha(A,B)$, etc.) can be generalized, mutatis mutandis, to PI-games.

The main difference between EFD-games and PI-games is the presence of the $\e_k$'s, which allow to perturb Player I's moves and make it easier for Player II to have winning strategies. Such flexibility is something to be expected in the metric setting, as otherwise Player II can rarely hope to win. To see this, consider for instance AF-algebras, which we introduce in the next section. All such \cstar-algebras are singly generated (\cite{thiel}), so if we start a match of $\PI_\alpha(A,B)$ where $A$ and $B$ are two such objects, then Player I can play both generators in the first two rounds. If Player II is not allowed to perturb Player~I's move, then they have a winning strategy if and only if $A \cong B$. 

Even with the extra flexibility granted by $\e_k$, it is not at all clear whether one can hope for a characterization of $\equiv_\alpha$ in terms of PI-games like the one we have in the discrete setting with Theorem~\ref{thm:EFD} (see Question~\ref{que:eleq}). It is not even clear if the conjunction of $\text{II} \uparrow\PI_\alpha(A,B)$ and $\text{II} \uparrow\PI_\alpha(B,C)$ implies $\text{II} \uparrow\PI_\alpha(A,C)$, that is whether the relation induced by PI-games on structures is transitive, although for AF-algebras this follows, for all $\alpha$ of the form $\omega^\omega \cdot \beta$, from Corollary~\ref{thm:combined}. Nevertheless, the following partial analogue of Theorem~\ref{thm:EFD} is all we need later in the paper.

\begin{proposition} \label{prop:PI}
Let $\mathcal L$ be a language for metric structures and let $\alpha<\omega_1$. Let $A$ and $B$ be $\mathcal L$-structures. Then $\text{II} \uparrow\PI_\alpha(A,B)$ implies $A \equiv_\alpha B$.
\end{proposition}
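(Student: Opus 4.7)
The plan is to proceed by transfinite induction on the quantifier rank $\beta \le \alpha$, establishing the following stronger statement: for every position $\bar p = ((\alpha_j, a_j, b_j))_{j<k}$ in a match of $\PI_\alpha(A,B)$ from which Player II has a winning strategy (setting $\alpha_{-1} = \alpha$), and every $\mathcal{L}_{\omega_1\omega}$-formula $\phi(x_0,\ldots,x_{k-1})$ with $\rk(\phi) \le \alpha_{k-1}$, one has $\phi^A(a_0,\ldots,a_{k-1}) = \phi^B(b_0,\ldots,b_{k-1})$. The proposition then follows by specializing to $k = 0$ and $\phi$ a sentence of rank at most $\alpha$.

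For the base case $\beta = 0$, formulas are built from atomic ones by continuous connectives only, so it suffices to treat atomic $\phi$. The key observation is that, whenever Player II has a winning strategy from a position $\bar p$, the partial map $a_i \mapsto b_i$ is automatically an isomorphism of the generated substructures: either $\alpha_{k-1} = 0$ and the winning condition of the game applies directly, or $\alpha_{k-1} > 0$ and we can have Player I play an auxiliary move $(0, c, \e)$, forcing Player II's response to extend $a_i \mapsto b_i$ to a partial isomorphism of one round longer, whose restriction to the first $k$ coordinates is itself a partial isomorphism. Hence atomic formulas agree on $\bar a$ and $\bar b$.

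For the inductive step at rank $\beta > 0$, I would carry out a secondary induction on the structure of $\phi$. The cases of continuous connectives $f(\phi_0,\ldots,\phi_{n-1})$ and countable conjunctions/disjunctions $\bigwedge_i \phi_i$, $\bigvee_i \phi_i$ follow immediately from the inductive hypothesis applied to subformulas and from the defining formulas for the evaluation of such expressions. The nontrivial case is the quantifier $\phi(\bar x) = \inf_y \psi(\bar x, y)$ with $\rk(\psi) = \gamma < \beta$ (the $\sup_y$ case being symmetric). Given $\e > 0$, choose $a \in A$ with $\psi^A(\bar a, a) \le \phi^A(\bar a) + \e$ and have Player I play the legal move $(\gamma, a, \e)$, which is allowed since $\gamma < \alpha_{k-1}$. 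Player II's winning response yields $(a_k, b_k)$ with $d_A(a_k, a) < \e$, after which Player II retains a winning strategy from the extended position whose clock is $\gamma \ge \rk(\psi)$. The inductive hypothesis then gives $\psi^A(\bar a, a_k) = \psi^B(\bar b, b_k)$, while uniform continuity of $\psi$ bounds $|\psi^A(\bar a, a_k) - \psi^A(\bar a, a)|$ by $\Delta_\psi(0,\ldots,0,\e)$. Combining these,
\[
\phi^B(\bar b) \le \psi^B(\bar b, b_k) = \psi^A(\bar a, a_k) \le \phi^A(\bar a) + \e + \Delta_\psi(0,\ldots,0,\e),
\]
and letting $\e \to 0$ gives $\phi^B(\bar b) \le \phi^A(\bar a)$; by symmetry (Player I is equally free to play from $B$) the reverse inequality follows.

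The main subtlety is the quantifier case, which requires simultaneously exploiting the $\e$-perturbation allowed to Player I and the modulus of continuity of the subformula $\psi$. This is precisely the reason PI-games are designed to permit approximate moves: in the discrete setting of Theorem~\ref{thm:EFD} an exact correspondence of elements suffices, whereas in the metric setting one must absorb both the witness-choice error $\e$ and the uniform-continuity error $\Delta_\psi(0,\ldots,0,\e)$ and let them vanish together, an argument tailored to mirror the uniform continuity built into the evaluation of $\mathcal{L}_{\omega_1\omega}$-formulas.
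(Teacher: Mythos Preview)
Your proof is correct and follows essentially the same approach as the paper's: both reduce to a claim about positions from which Player~II wins, then induct to reach the quantifier case, where the $\e$-perturbation in Player~I's move combines with the modulus of continuity of $\psi$ to yield the desired equality. The only difference is organizational: the paper inducts on the clock value $\alpha_{k-1}$ (with separate limit and successor cases), whereas you induct on $\rk(\phi)$ with a secondary structural induction, which absorbs the limit case into the $\bigwedge/\bigvee$ step and instead requires the auxiliary-move trick in the base case $\rk(\phi)=0$ to force the partial-isomorphism condition even when $\alpha_{k-1}>0$.
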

\begin{proof}
The proposition is a consequence of the following claim applied to the empty position.
\begin{claim}
Let $\bar p \in \mathcal{P}_k(\PI_\alpha(A,B))$ be the position $\bar p = ((\alpha_j, c_j,\e_j, a_j, b_j))_{j < k}$. If Player~II has a winning strategy from $\bar p$, then $(A, a_0, \dots, a_{k-1}) \equiv_{\alpha_{k-1}} (B, b_0, \dots, b_{k-1})$.
\end{claim}
The proof is by induction on $\alpha_{k-1}$. Suppose that $\alpha_{k-1} = 0$ and let $\phi(\bar x)$ be a formula with $\rk(\phi(\bar x)) = 0$. The value of $\phi(\bar x)$ does not change when going to substructures, as its rank is zero. By assumption we have that $C :=\langle a_0,\ldots, a_{k-1}\rangle_\mathcal{L}\cong\langle b_0,\ldots, b_{k-1}\rangle_\mathcal{L} =: D$, thus we have
\[
\phi^A( a_0,\ldots, a_{k-1}) = \phi^{C}( a_0,\ldots, a_{k-1}) = 
\phi^{D}( b_0,\ldots, b_{k-1}) = \phi^B( b_0,\ldots, b_{k-1}).
\]
It follows that
\[
(A, a_0,\ldots, a_{k-1}) \equiv_{0} (B, b_0,\ldots, b_{k-1}).
\]
Suppose now that $\alpha_{k-1}$ is a limit ordinal and that the claim has been proved for all positions $\bar q = ((\beta_j, c_j', \e_j', a_j', b_j'))_{j < k'}$ with $\beta_{k-1} < \alpha_{k-1}$. When applied to empty positions this entails that $A \equiv_\beta B$ for all $\beta < \alpha_{k-1}$, which in turn gives $A \equiv_{\alpha_{k-1}} B$, and thus proves the claim if $\bar p$ is the empty position. Suppose then that $\bar p$ is a $k$-position for $k > 0$ and set
\[
\bar p'= ((\alpha_j, c_j,\e_j, a_j, b_j))_{j < k-1}
\]
Pick $\gamma < \alpha_{k-1}$. By assumption Player II has a winning strategy $\bar \sigma$ from $\bar p$. It is immediate to see that $\bar \sigma$ is also a winning strategy from the position $\bar p'^\smallfrown(\gamma,c_{k-1},\e_{k-1}, a_{k-1}, b_{k-1})$. By inductive hypothesis we conclude that $(A, a_0, \dots, a_{k-1}) \equiv_{\gamma} (B, b_0, \dots, b_{k-1})$ for every $\gamma < \alpha_{k-1}$, which implies $(A, a_0, \dots, a_{k-1}) \equiv_{\alpha_{k-1}} (B, b_0, \dots, b_{k-1})$.

Suppose finally that $\alpha_{k-1} = \beta + 1$ and that the claim holds for positions ending with the clock on value $\beta$. Let
\[
\phi(\bar x) = \sup_y \psi(\bar x, y),
\]
 where $\psi(\bar x, y)$ is some formula of rank $\beta$. Let $r=\phi^A( a_0,\dots, a_{k-1})$ and $s=\phi^B( b_0,\dots, b_{k-1})$. Suppose by contradiction that there is $\e>0$ such that $s +\e<r$. By uniform continuity, we can choose $\delta>0$ such that, denoting $d_A$ the distance defining the metric structure on $A$,
 \[
d_A(a,a') <\delta \Rightarrow \lvert\psi^A( a_0,\ldots, a_{k-1},a)-\psi^A( a_0,\ldots, a_{k-1},a')\rvert<\e/2,\ \forall a,a' \in A.
 \] 
Let $c_{k} \in A$ be such that
\[
| \psi^A( a_0, \dots, a_{k-1}, c_{k}) -r| < \e/2,
\]
and suppose that Player I plays $(\beta, c_{k},\delta)$ in round $k$ of $\PI_\alpha(A,B)$. Since Player II has a winning strategy $\bar \sigma$ from position $\bar p$, there exists a move $( a_{k}, b_{k})$ (namely the one indicated by $\bar \sigma$) such that $\bar \sigma$ is also a winning strategy from position $\bar p^\smallfrown(\beta,c_{k},\delta,a_{k}, b_{k})$. By inductive assumption $(A, a_0, \dots, a_{k}) \equiv_{\beta} (B, b_0, \dots, b_{k})$, hence $\psi^A( a_0, \dots, a_{k}) = \psi^B( b_0, \dots, b_{k})$. Since $d_A(c_{k}, a_{k}) <\delta$, we have that $\lvert r-\psi^A( a_0, \dots, a_{k})\rvert<\e$, hence 
\[
|\psi^B( b_0, \dots, b_{k})-r|<\e,
\]
and therefore $|s-r|<\e$, a contradiction. (The case in which $r+\e<s$ is treated in the same way.)

The case $\phi(\bar x) = \inf_y \psi(\bar x, y)$ is analogous, and the case when $\phi(\bar x)$ is a general formula of rank $\beta +1$ can be proved by induction on the complexity.
\end{proof}

\subsection{AF-algebras and Dimension Groups} \label{ss.AFK0}

\subsubsection{AF-algebras and the $K_0$-functor}
In this section we recall some basic facts on AF-algebras and their $K_0$-groups (we refer to \cite{rll_ktheory} for a complete overview of these topics). We emphasize again that we require AF-algebras to be separable. A separable \cstar-algebra $A$ is \emph{approximately finite}, or simply an \emph{AF-algebra}, if there exists an increasing sequence $\{ A_n \}_{n \in \N}$ of finite-dimensional subalgebras of $A$ such that $\overline{\bigcup_{n \in \N} A_n} = A$. Recall that all finite-dimensional \cstar-algebras are isomorphic to direct sums of matrix algebras over the complex numbers.

We briefly recall the definition of $K_0$-group of a unital \cstar-algebra. Let $\mathcal K$ be the algebra of compact operators on a complex separable infinite-dimensional Hilbert space. If $A$ is a \cstar-algebra and $p$ and $q$ are projections in $A$, we say that $p$ and $q$ are Murray von Neumann equivalent, written $p\sim q$, if there is $v\in A$ with $vv^*=p$ and $v^*v=q$. The set of Murray von Neumann equivalence relations of projections in $A\otimes\mathcal K$ is denoted by $V(A)$. After fixing an isomorphism $\mathcal{K} \cong M_2(\mathcal{K})$, the set $V(A)$ can be naturally endowed with a semigroup structure with the operation 
\[
[p]+[q]= \left[
\begin{pmatrix}
p & 0 \\
0 & q
\end{pmatrix} \right].
\]
Given a projection $p \in A$, we let $[p] \in V(A)$ denote the class corresponding to the projection $p \otimes q \in A \otimes \mathcal{K}$, where $q$ is some minimal non-zero projection in $\mathcal{K}$. As all minimal projections in $\mathcal K$ are Murray von Neumann equivalent, this class does not depend on the choice of $q$.

The \emph{$K_0$-group} of a unital \cstar-algebra $A$, denoted $K_0(A)$, is the Grothendieck group of $V(A)$, that is the set of pairs $([p], [q]) \in V(A)^2$ with sum $([p_0], [q_0]) + ([p_1], [q_1]) = ([p_0] + [p_1], [q_0] + [q_1])$, and where the pairs $([p_0], [q_0])$ and $([p_1], [q_1])$ are identified if and only if there is $[r] \in V(A)$ such that $[p_0] + [q_1] + [r] = [p_1] + [q_0] + [r]$.

Let $K_0(A)_+ = \{ ([p] , [0]) \mid [p] \in V(A) \}$. If $V(A)$ has the cancellation property, then the map $[p] \mapsto ([p] , [0])$ is a bijection between $V(A)$ and $K_0(A)_+$, and we can identify the two sets. This is the case when $A$ is a unital AF-algebra (and more generally when $A$ is stably finite), and in this scenario $(K_0(A), K_0(A)_+,[1_A])$ is a unital ordered abelian group in the sense of the following definition (\cite[Propositions 5.1.5 and 5.1.7]{rll_ktheory}).

\begin{definition} \label{def:gg+}
A pair $(G,G_+)$ is called an \emph{ordered abelian group} if $G$ is an abelian group and $G_+$ is a positive cone in $G$ containing $0,$ i.e.\
\begin{itemize}
\item $G_+ + G_+ \subseteq G_+,$
\item $G_+-G_+=G$,
\item $G_+\cap(-G_+)=\{0\}$.
\end{itemize}
Given an ordered abelian group $(G,G_+)$ and $g$ and $h$ in $G$, we write $g\leq h$ if $h-g\in G_+$.

If $u\in G_+$, we say that $u$ is an \emph{order unit} if for all $x\in G$ there is $n$ such that $-nu\leq x\leq nu$. In this case the triple $(G,G_+,u)$ is called a unital ordered abelian group. A \emph{positive homomorphism} between two ordered groups $(G, G_+)$ and $(H, H_+)$ is a group homomorphism $\Phi \colon G \to H$ which preserves the order (or, equivalently, that maps $G_+$ into $H_+$). A positive homomorphism $\Phi$ between two unital ordered groups $(G, G_+, u)$ and $(H, H_+, v)$ is \emph{unital} if $\Phi(u) = v$.
\end{definition}

The map sending $A \mapsto (K_0(A), K_0(A)_+, [1_A])$ is a functor from the category of unital (approximately finite) \cstar-algebras into the category of (unital ordered) abelian groups. In fact, every $^*$-homomorphism $\Phi \colon A \to B$ naturally extends to a $^*$-homomorphism $ \Phi' \colon A \otimes \mathcal{K} \to B \otimes \mathcal{K}$ which in turn induces a group homomorphism $K_0(\Phi)\colon K_0(A)\to K_0(B)$ by mapping $[p] \mapsto [ \Phi'(p)]$ for every projection $p \in A \otimes \mathcal{K}$. Since $K_0(\Phi)(K_0(A)_+)$ is always contained in $K_0(B)_+$, if both $(K_0(A), K_0(A)_+)$ and $(K_0(B), K_0(B)_+)$ are ordered groups in the sense of Definition \ref{def:gg+}, then the map $K_0(\Phi)$ is moreover a positive homomorphism. If finally $\Phi$ is unital, then $K_0(\Phi)([1_A])=[1_B]$. This functor is continuous with respect to inductive limits, that is, if $A=\lim A_i$, with maps $\Phi_i \colon A_i \to A$, then $K_0(A) = \bigcup_{i \in \mathcal{I}} K_0(\Phi_i(A_i))$ and $K_0(A)_+ = \bigcup_{i \in \mathcal{I}} K_0(\Phi_i(A_i))_+$ (see \cite[Theorem~6.3.2]{rll_ktheory}).

Elliott's celebrated classification theorem of AF-algebra (\cite{Elliott.AF}) shows that the $K_0$-functor classifies AF-algebras up to isomorphism (also in the non-unital case). Its proof is reproduced in any standard text; we recommend \cite[Theorem~7.3.4]{rll_ktheory}. Elliott's proof does not simply show that $A \cong B$ if and only if $(K_0(A),K_0(A)_+,[1_A]) \cong (K_0(B),K_0(B)_+,[1_B])$, for any two AF-algebras $A$ and $B$. What it actually demonstrates is that any positive unital isomorphism $\Phi \colon K_0(A) \to K_0(B)$ lifts to an isomorphism $\Psi\colon A \to B$ such that $K_0(\Psi) = \Phi$. Two technical key tools employed in that proof are isolated in the following lemma for future reference.

\begin{lemma}[{\cite[Lemma~7.3.2]{rll_ktheory}}] \label{lemma:classification}
Let $F$ be a finite-dimensional \cstar-algebra and let $A$ be a unital AF-algebra.
\begin{enumerate}[label=(\roman*)]
\item \label{item:ex} For every positive unital homomorphism $\Phi \colon K_0(F) \to K_0(A)$ there exists a unital $^*$-homomorphism $\Psi \colon F \to A$ with $K_0(\Psi) = \Phi$.
\item \label{item:uni} Let $\Psi_0$ and $\Psi_1$ be $^*$-homomorphisms $F \to A$. Then $K_0(\Psi_0) = K_0(\Psi_1)$ if and only if there is a unitary $u \in A$ such that $\Psi_0 = \text{Ad}(u) \circ \Psi_1$.
\end{enumerate}
\end{lemma}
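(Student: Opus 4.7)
The plan is to work blockwise and exploit the fact that finite-dimensional \cstar-algebras are direct sums of full matrix algebras, together with continuity of $K_0$ under inductive limits and cancellation in AF-algebras. Write $F = \bigoplus_{l=1}^n M_{k_l}(\mathbb C)$, so that $K_0(F) \cong \bbZ^n$ with standard basis $e_l = [e_{11}^{(l)}]$, where $e_{ij}^{(l)}$ denote the matrix units of the $l$-th block, and order unit $[1_F] = \sum_l k_l e_l$.

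For \ref{item:ex}, I would first use continuity of $K_0$ under inductive limits to find a finite-dimensional subalgebra $A_0 \subseteq A$ with $1_A \in A_0$ and $\Phi(e_l) \in K_0(A_0)_+$ for every $l$. Since $V(A_0) \cong K_0(A_0)_+$ for the finite-dimensional $A_0$, each $\Phi(e_l)$ is realized as $[p_l]$ for a projection $p_l \in A_0$, and unitality of $\Phi$ forces $\sum_l k_l [p_l] = [1_{A_0}]$. Using cancellation inside each matrix summand of $A_0$, this equation lifts to an actual orthogonal decomposition: there exist mutually orthogonal projections $\{p_l^{(j)} : l \leq n,\ j \leq k_l\} \subseteq A_0$ with $p_l^{(j)} \sim p_l^{(1)}$ for every $l, j$ and $\sum_{l,j} p_l^{(j)} = 1_{A_0}$. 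Partial isometries in $A_0$ implementing the equivalences then serve as the images of the off-diagonal matrix units, yielding a unital $^*$-homomorphism $\Psi \colon F \to A_0 \subseteq A$ with $K_0(\Psi) = \Phi$ by construction.

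For \ref{item:uni}, the direction $\Psi_0 = \text{Ad}(u) \circ \Psi_1 \Rightarrow K_0(\Psi_0) = K_0(\Psi_1)$ is immediate, since unitary conjugation preserves Murray--von Neumann classes. For the converse, the strategy is to build the desired unitary block by block. For each $l$, the hypothesis gives $[\Psi_0(e_{11}^{(l)})] = [\Psi_1(e_{11}^{(l)})]$ in $K_0(A)$; since $A$ is AF and hence stably finite, the map $V(A) \to K_0(A)_+$ is injective, producing a genuine partial isometry $v_l \in A$ from $\Psi_1(e_{11}^{(l)})$ to $\Psi_0(e_{11}^{(l)})$. I would then promote this to a block intertwiner
\[
u_l := \sum_{j=1}^{k_l} \Psi_0(e_{j1}^{(l)})\, v_l\, \Psi_1(e_{1j}^{(l)}),
\]
which satisfies $u_l^* u_l = \Psi_1(1^{(l)})$, $u_l u_l^* = \Psi_0(1^{(l)})$, and $u_l \Psi_1(x) u_l^* = \Psi_0(x)$ for $x$ in the $l$-th block. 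Summing over $l$ gives a partial isometry $w := \sum_l u_l$ with $w^* w = \Psi_1(1_F)$, $ww^* = \Psi_0(1_F)$, and $w \Psi_1(x) w^* = \Psi_0(x)$ for all $x \in F$, the orthogonality of the central projections $\Psi_i(1^{(l)})$ across distinct $l$ killing the cross-terms.

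The remaining step, which I expect to be the main obstacle, is the extension of the partial isometry $w$ to a full unitary on $A$. Cancellation in $K_0(A)$ applied to $[\Psi_0(1_F)] = [\Psi_1(1_F)]$ yields $[1_A - \Psi_0(1_F)] = [1_A - \Psi_1(1_F)]$, and stable finiteness again provides a partial isometry $w' \in A$ with $(w')^*w' = 1_A - \Psi_1(1_F)$ and $w'(w')^* = 1_A - \Psi_0(1_F)$. Then $u := w + w'$ is a unitary in $A$, and since $w'$ is supported on $1_A - \Psi_1(1_F)$ while every $\Psi_1(x)$ is supported on $\Psi_1(1_F)$, all cross-terms in the expansion of $u\Psi_1(x)u^*$ vanish and one obtains $\text{Ad}(u) \circ \Psi_1 = \Psi_0$. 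The subtlety throughout is the double use of cancellation, first inside a finite-dimensional $A_0$ to realize a prescribed $K_0$-datum as an honest matrix-unit system, and then inside $A$ itself to pass from partial isometries to a genuine unitary; it is precisely here that the AF hypothesis is essential, since it guarantees both $V(A_0) \cong K_0(A_0)_+$ for the approximating subalgebras and the analogous identification for $A$.
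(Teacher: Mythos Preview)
The paper does not prove this lemma; it is quoted verbatim from \cite[Lemma~7.3.2]{rll_ktheory} and used as a black box in the proof of Theorem~\ref{thm:gamesK0alg}. So there is no ``paper's own proof'' to compare against, and your proposal should be judged on its own merits.

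Your argument is essentially the standard textbook proof and is correct in outline. One point in \ref{item:ex} deserves a word of caution: when you write ``unitality of $\Phi$ forces $\sum_l k_l [p_l] = [1_{A_0}]$'', this equation holds a priori only in $K_0(A)$, not in $K_0(A_0)$. To lift it to an honest equality of projections (up to equivalence) inside some finite-dimensional stage, you must pass further along the inductive system: the element $\sum_l k_l [p_l] - [1_{A_0}]$ maps to zero in $K_0(A)$, hence already vanishes in $K_0(A_m)$ for some $m \ge 0$, and you should replace $A_0$ by $A_m$. This is precisely the content of \cite[Lemma~7.3.3]{rll_ktheory}, which the paper also invokes. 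With that adjustment your construction of the matrix-unit system goes through.

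Your treatment of \ref{item:uni} is clean and complete; the two-step use of cancellation (first to get the blockwise intertwiners $v_l$, then to match the complementary projections and upgrade $w$ to a unitary) is exactly how the textbook argument runs.
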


\subsubsection*{Dimension Groups}
In this section we quickly introduce the class of ordered abelian groups that arise as $K_0$-groups of AF-algebras. This class can be abstractly characterized by properties \eqref{item:unp}-\eqref{item:rip} of the following definition.
\begin{definition}\label{defin:properties}
Let $(G,G_+)$ be an ordered abelian group. The group $(G, G_+)$
\begin{enumerate}
\item \label{item:unp} is \emph{unperforated} if $nx\geq 0$ for some $n>0$ implies $x\geq 0$, for every $x \in G$;
\item \label{item:rip} has \emph{Riesz interpolation property} if for every $x_0,x_1,y_0,y_1\in G$ with $x_i\leq y_j$ for $i,j\in\{0,1\}$ there is $z\in G$ with $x_i\leq z\leq y_j$ for $i, j\in\{0,1\}$;
\item is \emph{simple} if every non-zero element of $G_+$ is an order unit.
\end{enumerate}
\end{definition}

\begin{theorem}[{\cite[Theorem~2.2]{EHS.Dimension}}] \label{thm:dimgr}
A (unital) countable ordered abelian group $(G, G_+)$ arises as the $K_0$-group of a (unital) AF-algebra $A$ if and only if it is unperforated and has the Riesz interpolation property. Moreover $(G, G_+)$ is simple if and only if $A$ is simple.
\end{theorem}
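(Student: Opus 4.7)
The plan is to prove the two implications separately, and then address the moreover clause about simplicity.

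For the forward direction, suppose $A = \overline{\bigcup_n A_n}$ is a unital AF-algebra with each $A_n$ finite-dimensional. Every finite-dimensional \cstar-algebra $F$ is isomorphic to a direct sum of matrix algebras, hence $K_0(F)$ is of the form $(\bbZ^k, \bbN^k)$, which is clearly unperforated and has Riesz interpolation (it is in fact a lattice). By continuity of $K_0$ with respect to inductive limits recalled in \S\ref{ss.AFK0}, $(K_0(A), K_0(A)_+) = \varinjlim (K_0(A_n), K_0(A_n)_+)$. Both unperforation and the Riesz interpolation property pass to inductive limits of ordered abelian groups: given witnesses to a failure of either property in $K_0(A)$, one pulls the finitely many elements involved back to some $K_0(A_n)$ (using that every element of the limit comes from some stage), producing a witness to the failure there, a contradiction. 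In the unital case, the order unit is preserved automatically because the connecting maps are unital.

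For the reverse direction, the key structural fact is that every countable unperforated ordered abelian group $(G,G_+)$ with Riesz interpolation is an inductive limit $(G, G_+) = \varinjlim (\bbZ^{n_i}, \bbN^{n_i})$ of simplicial groups with positive connecting maps. This is the genuine content of Effros--Handelman--Shen and is the main obstacle of the proof; I would obtain it by enumerating $G_+ = \{x_0, x_1, \dots\}$ and inductively constructing maps $\varphi_i \colon \bbZ^{n_i} \to G$ whose images contain $x_0, \dots, x_i$, using the Riesz interpolation property to decompose finitely many elements into a common refinement and using unperforation to divide when necessary, then verifying that the factoring maps $\bbZ^{n_i} \to \bbZ^{n_{i+1}}$ are positive. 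In the unital case, one additionally includes the order unit in the image of each $\varphi_i$ and arranges for the connecting maps to be unital. Given such a presentation, I would apply Lemma~\ref{lemma:classification}\ref{item:ex} to lift each positive (unital) homomorphism $(\bbZ^{n_i}, \bbN^{n_i}) \to (\bbZ^{n_{i+1}}, \bbN^{n_{i+1}})$ to a (unital) $^*$-homomorphism $F_i \to F_{i+1}$ between the corresponding finite-dimensional algebras, and set $A = \varinjlim F_i$. This is by construction an AF-algebra (unital if we work in the unital category), and continuity of $K_0$ gives $(K_0(A), K_0(A)_+, [1_A]) \cong (G, G_+, u)$.

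For the simplicity clause, suppose first that $A$ is simple, and let $g \in K_0(A)_+$ be nonzero. Then $g = [p]$ for a nonzero projection $p$ in some $M_k(A)$; since $A$ is simple, so is $M_k(A) \cong A \otimes M_k$, and the closed two-sided ideal generated by $p$ is all of $M_k(A)$. Standard arguments on projections in AF-algebras (expressing $1$ as a sum of projections each equivalent to a subprojection of $p$) then show $[p]$ is an order unit, so $K_0(A)$ is simple. Conversely, if $(K_0(A), K_0(A)_+)$ is simple and $I \trianglelefteq A$ is a nonzero closed two-sided ideal, then $I$ contains a nonzero projection $q$ (since $I$ is itself AF), and $[q]$ is an order unit of $K_0(A)$; using Lemma~\ref{lemma:classification}\ref{item:ex} and a standard dimension-counting argument inside each finite-dimensional stage, one shows every projection of $A$ is equivalent to a subprojection of a multiple of $q$, forcing $I = A$.

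The main technical obstacle is the Effros--Handelman--Shen inductive limit decomposition in the reverse direction; the interaction of the Riesz interpolation property with unperforation, required to control denominators and factor maps through simplicial groups, is the delicate point. Everything else reduces to routine transfer via the $K_0$ functor together with Lemma~\ref{lemma:classification}.
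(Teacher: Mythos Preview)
The paper does not prove this theorem at all: it is stated with a citation to \cite{EHS.Dimension} and used as a black box, so there is no ``paper's own proof'' to compare against. Your sketch is a reasonable outline of how the result is actually established in the literature, and you correctly identify the Effros--Handelman--Shen direction (writing an arbitrary countable unperforated Riesz group as a limit of simplicial groups) as the substantive part.

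That said, your description of that step is too optimistic. The actual argument does not proceed by merely ``enumerating $G_+$ and using Riesz interpolation to find common refinements''; one needs the so-called Shen criterion, which says that any positive map $\bbZ^m \to G$ factors through some $\bbZ^n$ via positive maps in a way that kills a prescribed kernel element. Proving that unperforated Riesz groups satisfy this criterion is where the real work lies, and it requires a careful induction combining Riesz decomposition with unperforation in a way your sketch does not capture. Your forward direction and the simplicity clause are fine as sketches (for the latter, the cleanest route is the lattice isomorphism between closed ideals of an AF-algebra and order ideals of its $K_0$-group, rather than the ad hoc projection argument you describe).
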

We say that a countable ordered abelian group is a \emph{dimension group} if it satisfies the two equivalent conditions of Theorem~\ref{thm:dimgr}. The most elementary examples of dimension groups are the $K_0$-groups of finite-dimensional \cstar-algebras. Given $F \cong \bigoplus_{\ell < m} M_{n_\ell}(\mathbb{C})$, it is an exercise to check that the resulting $K_0$-group is the triple $(\mathbb{Z}^m, \N^m, (n_0, \dots, n_{m-1}))$. Dimension groups can also be defined as those ordered abelian groups that arise as inductive limits of ordered groups of the form $(\mathbb{Z}^m, \N^m)$, with positive homomorphisms as connective maps (\cite[Proposition~7.2.8]{rll_ktheory}).

\begin{remark}\label{remark:interpret}
We consider unital ordered abelian groups $(G,G_+,u)$ as structures in the language of pointed ordered groups $\mathcal L_{K_0,u}=\{+,\leq,u\}$, where $+$ is a binary operation, $\le$ is a binary relation and $u$ is a constant symbol. Similarly, we see abelian pointed semigroups, such as $(V(A), [1_A])$ for a unital \cstar-algebra $A$, as structures of the language of semigroups with a constant symbol $u$, $\mathcal L_{V,u} = \{ + , u \}$. Given a unital \cstar-algebra $A$ and its $K_0$-group $K_0(A)$, we interpret the latter as an $\mathcal L_{K_0,u}$-structure, where $a \le^{K_0(A)} b$ if and only if $b -a \in K_0(A)_+$ and where $u^{K_0(A)} = [1_A]$. Note that the interpretation of $\le^{K_0(A)}$ is a partial order only in case $V(A)$ has the cancellation property. In general $\le^{K_0(A)}$ is just a preorder, that is $a \le b$ and $b \le a$ does not imply $a = b$.
\end{remark}
\section{From $K_0$-Groups to AF-algebras} \label{s:K0toalg}
The first part of this section is devoted to the proof of Theorem~\ref{mainthm:K0toalg}, while in the second we cover the construction leading to Theorem~\ref{mainthm:highrank}.

 %$(\mathcal G_\alpha )_{\alpha < \omega_1}$ whose corresponding simple unital AF-algebras $(A_\alpha )_{\alpha < \omega_1}$ are such that $A_\alpha \equiv_\alpha A_\beta$ and $A_\alpha \not \cong A_\beta$ for every $\alpha < \beta < \omega_1$. In particular, the sequence $( A_\alpha )_{\alpha < \omega_1}$ is a family of simple unital AF-algebras with arbitrarily high Scott Rank.

\subsection{A Transfer of Strategies}\label{SS.Strategies}
Given two unital AF-algebras $A$ and $B$, thanks to Theorem~\ref{thm:EFD} and Proposition~\ref{prop:PI}, we know that $K_0(A)\equiv_{\alpha} K_0(B)$ is equivalent to $\text{II}\uparrow \EFD_\alpha(K_0(A),K_0(B))$, and that $\text{II} \uparrow \PI_\alpha(A,B)$ implies $A \equiv_\alpha B$. Exploiting this, the core idea to prove Theorem~\ref{mainthm:K0toalg} is to pass through games, in particular we show how Player~II can use a winning strategy for $\EFD_{\omega \cdot \alpha}(K_0(A),K_0(B))$ to devise a winning strategy in $\PI_\alpha(A,B)$. This is done in Theorem~\ref{thm:gamesK0alg}, using Lemma~\ref{lemma:classification}.

A reader familiar with the arguments employed in \cite{Elliott.AF} will probably find too many details in the following proposition, which we still decided to include for completeness. We treat $K_0$-groups as $\mathcal L_{K_0,u}$-structures (see Remark~\ref{remark:interpret}) and unital AF-algebras as $\mathcal{L}_{\rm C^\ast}$-structures (see \S\ref{sss.metricinf}).
\begin{theorem} \label{thm:gamesK0alg}
Let $A$ and $B$ be unital AF-algebras and let $\alpha<\omega_1$. Then
\[
\text{II}\uparrow\EFD_{\omega\cdot \alpha}(K_0(A), K_0(B)) \Rightarrow \text{II}\uparrow\PI_\alpha(A,B).
\]
\end{theorem}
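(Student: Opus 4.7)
The plan is to convert a winning strategy $\tau$ for Player~II in $\EFD_{\omega\cdot\alpha}(K_0(A),K_0(B))$ into a winning strategy for Player~II in $\PI_\alpha(A,B)$, using Lemma~\ref{lemma:classification} as the dictionary between $K_0$-maps and $\ast$-homomorphisms of finite-dimensional subalgebras. The multiplicative factor $\omega$ in the EFD clock is precisely what affords the necessary room: whenever $\alpha_k<\alpha_{k-1}$ one has $\omega\cdot\alpha_k+m<\omega\cdot\alpha_{k-1}$ for every $m<\omega$, so Player~II may freely simulate any finite number of EFD moves during a single PI round.

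Inductively, at the end of PI round $k$ Player~II maintains a triple $(F_k,\Psi_k,G_k)$ where $F_k\subseteq A$ and $G_k\subseteq B$ are unital finite-dimensional subalgebras, $\Psi_k\colon F_k\to G_k$ is a unital $\ast$-isomorphism with $a_j\in F_k$ and $\Psi_k(a_j)=b_j$ for every $j\leq k$, and $F_{k-1}\subseteq F_k$ with $\Psi_k|_{F_{k-1}}=\Psi_{k-1}$. In parallel, Player~II keeps track of a position in the simulated EFD match with the compatibility condition that, if $\iota_k\colon K_0(F_k)\hookrightarrow K_0(A)$ is the inclusion-induced map and $\psi_k$ denotes the partial isomorphism between substructures of $K_0(A)$ and $K_0(B)$ built up to that point by $\tau$, then $K_0(\Psi_k)=\psi_k\circ\iota_k$.

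During round $k$ Player~I plays $(\alpha_k,c_k,\varepsilon_k)$; assume $c_k\in A$, the opposite case being symmetric. Using a unital finite-dimensional filtration of $A$, choose $F_k\supseteq F_{k-1}$ together with $a_k\in F_k$ satisfying $\|a_k-c_k\|<\varepsilon_k$, and fix minimal projections $p_{k,1},\dots,p_{k,m_k}$, one per matrix summand of $F_k$. Simulate $m_k$ EFD rounds in which Player~I plays the $[p_{k,\ell}]\in K_0(A)$ in succession, with clock values $\omega\cdot\alpha_k+m_k,\dots,\omega\cdot\alpha_k+1$; by the winning property of $\tau$ (applied to all possible immediate terminations), the resulting responses $q_{k,\ell}\in K_0(B)$ extend $\psi_k$ to a partial isomorphism of the enlarged substructures. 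The composition $\phi_k:=\psi_k\circ\iota_k\colon K_0(F_k)\to K_0(B)$ is then a positive unital homomorphism, the unitality coming from the fact that the constant $u=[1_A]$ is preserved by any partial isomorphism of $\mathcal{L}_{K_0,u}$-structures. Lemma~\ref{lemma:classification}\ref{item:ex} produces a unital $\ast$-homomorphism $\tilde\Psi_k\colon F_k\to B$ with $K_0(\tilde\Psi_k)=\phi_k$, automatically injective as $F_k$ is semisimple; its restriction to $F_{k-1}$ induces on $K_0$ the map $\phi_k\circ(K_0(F_{k-1})\to K_0(F_k))=\psi_k\circ\iota_{k-1}=K_0(\Psi_{k-1})$, so Lemma~\ref{lemma:classification}\ref{item:uni} supplies a unitary $u\in B$ with $\tilde\Psi_k|_{F_{k-1}}=\mathrm{Ad}(u)\circ\Psi_{k-1}$. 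Setting $\Psi_k:=\mathrm{Ad}(u^*)\circ\tilde\Psi_k$, $G_k:=\Psi_k(F_k)$ and $b_k:=\Psi_k(a_k)$ restores the invariant.

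Once the PI clock hits $0$ after some $N$ rounds, the final $\Psi$ restricts to a $\ast$-isomorphism between $\langle a_0,\dots,a_{N-1}\rangle_{\mathcal{L}_{\mathrm{C}^\ast}}$ and $\langle b_0,\dots,b_{N-1}\rangle_{\mathcal{L}_{\mathrm{C}^\ast}}$, so Player~II has won. The main technical point will be the coherence of the $K_0$-level bookkeeping, namely verifying that the partial isomorphism produced by $\tau$, once composed with the inclusions $\iota_k$, really reproduces $K_0(\Psi_{k-1})$ on the old domain, and handling the case in which Player~I plays in $B$ by applying Lemma~\ref{lemma:classification} in the reverse direction using $\psi_k^{-1}$ with the roles of $A$ and $B$ swapped. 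In essence, the argument is Elliott's intertwining recast as a transfer of winning strategies.
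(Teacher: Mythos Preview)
Your approach is the same as the paper's: simulate finitely many EFD moves per PI round, use Lemma~\ref{lemma:classification}\ref{item:ex} to lift the resulting $K_0$-map, then Lemma~\ref{lemma:classification}\ref{item:uni} to restore coherence with the previous $\Psi_{k-1}$. Two points need tightening.

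First, the claim that $\tilde\Psi_k$ is ``automatically injective as $F_k$ is semisimple'' is not right as stated: semisimple algebras admit non-injective $\ast$-homomorphisms (projections onto summands). What is needed is that $\phi_k$ sends each minimal class $[p_{k,\ell}]$ to a \emph{nonzero} element of $K_0(B)$. This does follow from the partial-isomorphism property of $\psi_k$: the atomic formula $u+x_\ell=u$ fails for $[p_{k,\ell}]$ in $K_0(A)$ (nonzero projections in an AF-algebra have nonzero $K_0$-class), hence fails for $q_{k,\ell}$ in $K_0(B)$, whence $q_{k,\ell}\neq 0$ and $\tilde\Psi_k$ is injective on each simple summand. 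The paper spells this out.

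Second, when Player~I switches sides you must enlarge $G_{k-1}$ to a finite-dimensional $G_k\subseteq B$ containing an $\varepsilon_k$-approximant of $c_k$. But $G_{k-1}=\Psi_{k-1}(F_{k-1})$ is an arbitrary finite-dimensional subalgebra of $B$ and need not sit inside any member of a fixed filtration, so ``using a filtration of $B$'' does not immediately apply. The paper avoids this by always factoring the lift through a finite stage $B_{s_k}$ via \cite[Lemma~7.3.3]{rll_ktheory}, thus maintaining the invariant $D_k\subseteq B_{s_k}$ throughout. Your more direct route---applying Lemma~\ref{lemma:classification} with target the full algebra $B$---can be made to work, but then you must invoke the standard perturbation fact that in an AF-algebra every finite-dimensional subalgebra can be enlarged to one approximating any prescribed finite set; this step should be stated or cited rather than absorbed into ``using a filtration''.
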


\begin{proof}
Let $\{ A_n \}_{n \in \N}$ and $\{ B_n \}_{n \in \N}$ be increasing sequences of finite-dimensional \cstar-algebras such that 
\[
A_0 = \mathbb{C} 1_A, \ B_0 = \mathbb{C} 1_B, \ \overline{\bigcup_{n \in \N} A_n} = A, \ \overline{\bigcup_{n \in \N} B_n} = B.
\]
For each $n \in \N$, let $\iota^A_n\colon A_n \to A$ and $\iota^B_n\colon B_n \to B$ be the unital inclusions. By~\cite[Theorem~6.3.2]{rll_ktheory}, 
\[
K_0(A) =\bigcup_{n \in \N} K_0(\iota^A_n)(K_0(A_n))\text{ and }K_0(B) = \bigcup_{n \in \N} K_0(\iota^B_n)(K_0(B_n)).
\]
We start a match of $\PI_\alpha(A,B)$, and we show how Player II has to answer Player I's moves in order to win. 

\textbf{Round $k = 0$.} Suppose that Player I plays $(\alpha_0, c_0,\e_0)$ with $c_0\in A$ (the case $c_0 \in B$ is analogous). First, Player II finds $r_0\in\N$ and $ a_0\in A_{r_0}$ such that $\norm{a_0- c_0} <\e_0$. Being finite-dimensional, the \cstar-algebra $ A_{r_0}$ is isomorphic to $\bigoplus_{\ell < m_0} M_{n_\ell}(\mathbb{C})$ for some $m_0 \in \N$. Let $\{ e_{i,j,\ell} \}_{\ell < m_0, i,j < n_\ell}$ be a system of matrix units for $A_{r_0}$. The dimension group $(K_0(A_{r_0}), K_0(A_{r_0})_+)$ is isomorphic to $(\bbZ^{m_0 }, \bbN^{m_0})$, with generators $\{ [e_{0,0, \ell}] \}_{\ell < m_0}$.

We start in parallel a match of $\EFD_{\omega \cdot \alpha}(K_0(A), K_0(B))$ with Player I opening with the following $m_0$ moves
\[
\left(\omega \cdot \alpha_0 + m_0 - 1, K_0(\iota^A_{r_0})([e_{0,0,0}])\right), \ldots, \left(\omega \cdot \alpha_0, K_0(\iota^A_{r_0})([e_{0,0, m_0 - 1}])\right).
\]
Player II answers with some $d_{0,0} \ldots, d_{0, m_0 - 1} \in K_0(B)$ according to their winning strategy for $\EFD_{\omega \cdot \alpha}(K_0(A), K_0(B))$. As a consequence, the positive unital group homomorphism
\begin{align} 
\Phi_0\colon K_0(\iota^A_{r_0})(K_0(A_{r_0})) &\to K_0(B) \label{align:iso} \\ 
K_0(\iota^A_{r_0})([e_{0,0, \ell}]) &\mapsto d_{0,\ell} \quad \forall \ell < m_0, \nonumber
\end{align}
 is an isomorphism onto its image.
 
 Consider $B_0 = \mathbb{C} 1_B \subseteq B$. Let $\Theta\colon K_0(B_0) \to K_0(A_{r_0})$ be the group homomorphism sending $[1_{B}]$ to $[1_{A_{r_0}}]$. It follows that
\[
(\Phi_0 \circ K_0(\iota^A_{r_0}))(\Theta([1_{B}])) = \Phi_0(K_0(\iota^A_{r_0})([1_{A_{r_0}}])) \stackrel{\eqref{align:iso}}{=} [1_B].
\]
This implies $(\Phi_0 \circ K_0(\iota^A_{r_0})) \circ \Theta = K_0(\iota^B_0)$, since $[1_{B}]$ generates $K_0(B_0)$. By \cite[Lemma~7.3.3]{rll_ktheory} there exists $s_0 \in \N$ and a positive unital group homomorphism $\Phi'\colon K_0( A_{r_0}) \to K_0(B_{s_0})$ making the following diagram commute
\begin{equation} \label{eq:diagram1}
 \begin{tikzcd}
 K_0(B_{s_0}) \arrow{r}{K_0(\iota^B_{s_0})} & K_0(B) \\
 K_0( A_{r_0}) \arrow{u}{\Phi'} \arrow{r}{K_0(\iota^A_{r_0})} &K_0(\iota^A_{r_0})(K_0(A_{r_0}))\arrow[swap]{u}{\Phi_0}
 \end{tikzcd}
\end{equation}

By Proposition~\ref{lemma:classification}\ref{item:ex}, there exists a unital $^*$-homomorphism $\Psi'\colon A_{r_0} \to B_{s_0}$ such that $K_0(\Psi') = \Phi'$. Note that, given $\ell < m_0$, we have
\begin{equation} \label{eq:inj}
K_0(\iota^B_{s_0})([\Psi'(e_{0,0, \ell})]) =K_0(\iota^B_{s_0})(\Phi'([(e_{0,0, \ell})]) ) \stackrel{\eqref{eq:diagram1}}{=} \Phi_0 ( K_0(\iota^A_{r_0})([e_{0,0, \ell}]))\stackrel{\eqref{align:iso}}{=} d_{0,\ell} > 0.
\end{equation}
The latter inequality follows from the fact that $\Phi_0$ is an isomorphism onto its image, and that $K_0(\iota^A_{r_0})([e_{0,0,\ell}]) = [\iota^A_{r_0}(e_{0,0,\ell})] > 0$, which is the case since $\iota^A_{r_0}$ is injective. This shows that $\Psi'(e_{0,0,\ell}) \not = 0$ for every $\ell < m_0$, and thus that $\Psi'$ is injective on each addendum $M_{n_\ell}(\mathbb{C})$ of $A_{r_0}$, and hence on $A_{r_0}$ itself (as matrix algebras are simple). Set $ b_0 := \Psi'( a_0) \in B_{s_0}$, and let it be Player II's move for round $k = 0$.

We conclude step $k=0$ by setting some notation needed in the upcoming steps of the proof. Set $C_0 := A_{r_0}$ and $D_0 := \Psi'(C_0)$. Denote by $\iota_{D_0}$ the inclusion of $D_0$ in $B$, and $\iota$ the inclusion of $D_0$ in $B_{s_0}$. Let moreover $\Psi_0\colon C_0 \to D_0$ be the co-restriction of $\Psi'$ to $D_0$. Since $K_0(\iota) \circ K_0( \Psi_0) = K_0(\Psi') = \Phi'$, and $\iota_{D_0} = \iota^B_{s_0} \circ \iota$, we can infer that
\[
K_0(\iota_{D_0}) \circ K_0(\Psi_0) =K_0(\iota^B_{s_0}) \circ K_0(\iota) \circ K_0(\Psi_0) = K_0(\iota^B_{s_0}) \circ \Phi'.
\]
In view of \eqref{eq:diagram1}, this entails that following diagram commutes
\[
\begin{tikzcd}
 K_0(D_0) \arrow{r}{K_0(\iota_{D_0})} & K_0(B) \\
 K_0(C_0) \arrow{u}{K_0(\Psi_0)} \arrow{r}{K_0(\iota^A_{r_0})} &K_0(\iota^A_{r_0})(K_0(A_{r_0}))\arrow[swap]{u}{\Phi_0}
\end{tikzcd}
\] 

\textbf{Round $k+1$.}
The general round is played similarly to round 0, but we also use Proposition~\ref{lemma:classification}\ref{item:uni} to make sure that Player II's move is coherent with their previous moves. Let's see it in detail.

Suppose that at the beginning of round $k+1 \in \N$ the game is in position
\[
(\alpha_0, c_0,\e_0,a_0, b_0), \ldots, (\alpha_{k-1}, c_{k-1},\e_{k-1}, a_{k-1}, b_{k-1}).
\]
We make the following inductive assumptions.
\begin{enumerate}[label=(\roman*)]
\item \label{item:ia1} We have naturals $r_0 \le \ldots\le r_{k-1}$ and $s_0\le \ldots\le s_{k-1} \in \N$ such that $ a_0,\ldots, a_{j} \in A_{r_{j}}$ and $ b_0, \ldots, b_{j} \in B_{s_{j}}$ for all $j<k$. Furthermore, we have natural $m_0,\dots, m_{k-1}$ such that $A_{r_j}\cong\bigoplus_{\ell<m_j}M_{n_{\ell,j}}$ for all $j < k$.
\item \label{item:ia2} We assume that there exists an ongoing match of $\EFD_{\omega \cdot \alpha}(K_0(A), K_0(B))$ which is in position
\begin{align*}
\left( \omega \cdot \alpha_0 + m_0 -1, e_{0,0}, d_{0,0} \right), &\ldots, \left( \omega \cdot \alpha_0, e_{m_0-1 ,0}, d_{m_0-1,0} \right), \\
&\ldots, \\
 \left( \omega \cdot \alpha_{k-1} + m_{k-1} -1, e_{0,k-1}, d_{0,k-1} \right), &\ldots, \left( \omega \cdot \alpha_{k-1}, e_{m_{k-1}-1,k-1}, d_{m_{k-1}-1,k -1} \right),
\end{align*}
with $\{ e_{i,j} \}_{j < k, i < m_j} \subseteq K_0(A)$, $\{ d_{i,j} \}_{j < k, i < m_j} \subseteq K_0(B)$. We assume that Player II has played their moves following a winning strategy.
\item We assume there exist two finite-dimensional subalgebras $C_{k-1} \subseteq A_{r_{k-1}}$ and $D_{k-1} \subseteq B_{s_{k-1}}$ with unital immersions $\iota_{C_{k-1}}\colon C_{k-1} \to A$ and $\iota_{D_{k-1}}\colon D_{k-1} \to B$ such that:
\begin{enumerate}[label=(iii.\alph*)]
\item $ a_0, \ldots, a_{k-1} \in C_{k-1}$ and $ b_0, \ldots, b_{k-1} \in D_{k-1}$,
\item \label{item:iso} there exists an isomorphism of \cstar-algebras $\Psi_{k-1}\colon C_{k-1} \to D_{k-1}$ mapping $ a_j \mapsto b_j$ for every $j < k$,
\item $K_0(\iota_{C_{k-1}})(K_0(C_{k-1}))$ is the subgroup of $K_0(A)$ generated by $\{ e_{i,j} \}_{j < k, i < m_j}$, and similarly $\{ d_{i,j} \}_{j < k, i < m_j}$ generate $K_0(\iota_{D_{k-1}})(K_0(D_{k-1}))$ in $K_0(B)$,
\item let $\Phi_{k-1}\colon K_0(\iota_{C_{k-1}})(K_0(C_{k-1}))\to K_0(\iota_{D_{k-1}})(K_0(D_{k-1}))$ be the positive unital isomorphism mapping $e_{i,j} \mapsto d_{i,j}$ for every $j < k$ and $i< m_j$, which exists since Player II is playing according to a winning strategy. Then the following diagram commutes
\begin{equation} \label{eq:diagramk-1}
\begin{tikzcd}
K_0(D_{k-1}) \arrow{r}{K_0(\iota_{D_{k-1}})} \arrow[swap,leftarrow]{d}{K_0(\Psi_{k-1})} & K_0(B) \arrow[leftarrow]{d}{\Phi_{k-1}} \\
K_0(C_{k-1}) \arrow{r}{K_0(\iota_{C_{k-1}})}&K_0(\iota_{C_{k-1}})(K_0(C_{k-1}))
\end{tikzcd}
\end{equation}
\end{enumerate}
\end{enumerate}

If $\alpha_{k-1} = 0$ then the game $\PI_\alpha(A,B)$ terminates, and by~\ref{item:iso} Player II wins. If, on the other hand, $\alpha_{k-1} > 0$, the game continues. Suppose that for their next move Player I picks $(\alpha_{k}, c_{k},\e_k)$, where $c_k\in A$ (again, if Player I plays $c_k \in B$ the proof is verbatim). Let $r_{k} \in \N$ greater than $r_{k-1}$ and big enough so that there is $ a_k\in A_{r_{k}}$ with $\norm{ c_k-a_k}<\e_k$. We have that $ A_{r_{k}} \cong \bigoplus_{\ell < m_{k}} M_{n_\ell}(\mathbb{C})$ for some $m_{k} \in \N$. Let $\{ f_{i,j, \ell} \}_{\ell < m_{k},i,j < n_\ell}$ be a system of matrix units for $A_{r_{k}}$. We continue the match of $\EFD_{\omega \cdot \alpha}(K_0(A), K_0(B))$ from item \ref{item:ia2} in the inductive assumption, and we suppose that Player I plays the following $m_{k}$ moves
\[
 \left( \omega \cdot \alpha_{k} + m_{k} -1 , K_0(\iota^A_{r_{k}})([f_{0,0,0}])\right), \ldots, \left( \omega \cdot \alpha_{k}, K_0(\iota^A_{r_{k}})([f_{0,0,m_{k} -1}]) \right).
\]
By item \ref{item:ia2} of the inductive assumption, Player II can keep playing according to their winning strategy for $\EFD_{\omega \cdot \alpha}(K_0(A), K_0(B))$ with some $d_{0,k}, \ldots, d_{m_{k}-1,k} \in K_0(B)$. The positive unital group homomorphism 
\begin{align} 
\Phi_{k}\colon K_0(\iota^A_{r_{k}})(K_0(A_{r_{k}})) &\to K_0(B) \label{align:isok} \\ 
K_0(\iota^A_{r_{k}})([f_{0,0,\ell}]) &\mapsto d_{\ell,k}\quad \forall \ell < m_{k}, \nonumber \\
e_{i,j} &\mapsto d_{i,j} \quad \forall j \le k, i < m_k, \nonumber
\end{align}
is therefore an isomorphism onto its image which is equal to $\Phi_{k-1}$ on $K_0(\iota_{C_{k-1}})( K_0(C_{k-1}))$.

Denote the inclusions of $C_{k-1}$ in $A_{r_{k}}$ by $\iota_1$ . By setting
\[
\Theta: =K_0(\iota_1) \circ K_0(\Psi_{k-1}^{-1})\colon K_0(D_{k-1}) \to K_0(A_{r_{k}}),
\]
and
\[
\Xi:= \Phi_{k} \circ K_0(\iota^A_{r_{k}})\colon K_0(A_{r_{k}}) \to K_0(B),
\] 
we can see that 
\begin{eqnarray*}
\Xi \circ \Theta &=& \Phi_{k} \circ K_0(\iota^A_{r_{k}}) \circ K_0(\iota_1) \circ K_0(\Psi_{k-1}^{-1}) \\
&=& \Phi_{k} \circ K_0(\iota_{C_{k-1}}) \circ K_0(\Psi_{k-1}^{-1}) \\
&=& \Phi_{k-1} \circ K_0(\iota_{C_{k-1}}) \circ K_0(\Psi_{k-1}^{-1}) \\
&\stackrel{\mathclap{\eqref{eq:diagramk-1}}}{=}& K_0(\iota_{D_{k-1}}) \circ K_0(\Psi_{k-1}) \circ K_0(\Psi_{k-1}^{-1})= K_0(\iota_{D_{k-1}}).
\end{eqnarray*}
The one before last equality holds since $\Phi_{k}$ is equal to $\Phi_{k-1}$ on $K_0(\iota_{C_{k-1}})( K_0(C_{k-1}))$. By \cite[Lemma~7.3.3]{rll_ktheory} there exists $s_{k} \in \N$ greater than $s_{k-1}$ and a positive unital group homomorphism $\tilde \Phi\colon K_0(A_{r_{k}}) \to K_0(B_{s_{k}})$ making the following diagrams commute
\begin{equation} \label{eq:diagramk}
 \begin{tikzcd}
 K_0(D_{k-1}) \arrow{r}{K_0(\iota_2)} \arrow[swap]{dr}{\Theta: =K_0(\iota_1) \circ K_0(\Psi_{k-1}^{-1})} &K_0(B_{s_{k}}) \arrow{r}{K_0(\iota^B_{s_{k}})} & K_0(B) \\
 & K_0( A_{r_{k}}) \arrow{u}{\tilde \Phi} \arrow{r}{K_0(\iota^A_{r_{k}})} &K_0(\iota^A_{r_{k}})(K_0(A_{r_{k}}))\arrow[swap]{u}{\Phi_{k}}
 \end{tikzcd}.
\end{equation}
By Proposition~\ref{lemma:classification}\ref{item:ex} there exists a unital $^*$-homomorphism $\tilde \Psi\colon A_{r_{k}} \to B_{s_{k}}$ such that $K_0(\tilde \Psi) = \tilde \Phi$. Note moreover that, given $\ell < m_{k}$, we have
\begin{eqnarray*}
K_0(\iota^B_{s_{k}})([\tilde \Psi(f_{0,0,\ell})]) &=& K_0(\iota^B_{s_{k}})(\tilde \Phi([f_{0,0,\ell}]) ) \\ &\stackrel{\mathclap{\eqref{eq:diagramk}}}{=}& \Phi_{k} ( K_0(\iota^A_{r_{k}}) ([f_{0,0,\ell}]))\\ &\stackrel{\mathclap{\eqref{align:isok}}}{=}& d_{\ell,k} > 0,
\end{eqnarray*}
therefore, by the same argument as the one after equation \eqref{eq:inj}, $\tilde \Psi$ is injective on $A_{r_{k}}$.

Denote the inclusion of $D_{k-1}$ into $B_{s_k}$ by $\iota_2$. By \eqref{eq:diagramk} we have that $K_0(\tilde \Psi \circ \iota_1 \circ \Psi_{k-1}^{-1}) = K_0(\iota_2)$. By Proposition~\ref{lemma:classification}\ref{item:uni} there exists therefore a unitary $u \in B_{s_{k}}$ such that $\text{Ad}(u) \circ \tilde \Psi \restriction C_{k-1} = \Psi_{k-1}$. Define $\Psi_{k}$ as the co-restriction of $\text{Ad}(u) \circ \tilde \Psi$ to $D_{k} := \text{Ad}(u)(\tilde \Psi(A_{r_{k}}))$. Then $\Psi_{k}\colon A_{r_{k}} \to D_{k}$ is an isomorphism mapping $ a_j \mapsto b_j$ for every $j < k$, and such that $K_0 (\iota) \circ K_0(\Psi_{k}) = \tilde \Phi$, where $\iota$ is the inclusion of $D_{k}$ into $B_{s_{k}}$. Let $ b_{k} := \Psi_{k}( a_{k})$ be Player~II's move for round $k$. Finally, set $C_{k} := A_{r_{k}}$. Arguing like at the end of round 0, it is possible to check that all inductive assumptions are satisfied.
\end{proof}

%We finally have all the ingredients required to prove Theorem~\ref{mainthm:K0toalg}.
\begin{corollary} \label{cor:K0toalg}
Let $A$ and $B$ be unital AF-algebras and let $\alpha < \omega_1$. Then
\[
(K_0(A), K_0(A)_+, [1_A]) \equiv_{\omega \cdot \alpha} (K_0(B), K_0(B)_+,[1_B]) \Rightarrow A \equiv_\alpha B.
\]
\end{corollary}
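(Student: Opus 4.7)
The plan is to derive this corollary as an immediate chain of three results already established in the excerpt, with no additional combinatorial work needed. The hypothesis $(K_0(A), K_0(A)_+, [1_A]) \equiv_{\omega \cdot \alpha} (K_0(B), K_0(B)_+,[1_B])$ is a statement about countable discrete $\mathcal{L}_{K_0,u}$-structures, since the $K_0$-groups of separable AF-algebras are countable (being realised as increasing unions of the groups $K_0(A_n)$ of the finite-dimensional approximating subalgebras, each of which is finitely generated abelian). This countability is precisely what allows me to invoke Theorem~\ref{thm:EFD} on the discrete side.

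First, applying Theorem~\ref{thm:EFD} to the unital ordered abelian groups $(K_0(A), K_0(A)_+, [1_A])$ and $(K_0(B), K_0(B)_+, [1_B])$ with ordinal $\omega \cdot \alpha$ yields that Player~II has a winning strategy in the game $\EFD_{\omega \cdot \alpha}(K_0(A), K_0(B))$. Next, I feed this winning strategy into the transfer result Theorem~\ref{thm:gamesK0alg}, which produces a winning strategy for Player~II in the metric game $\PI_\alpha(A, B)$. Finally, I apply Proposition~\ref{prop:PI} to conclude that $A \equiv_\alpha B$, as desired.

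The entire argument is therefore the concatenation
\[
K_0(A) \equiv_{\omega \cdot \alpha} K_0(B) \;\Rightarrow\; \text{II}\uparrow \EFD_{\omega\cdot\alpha}(K_0(A),K_0(B)) \;\Rightarrow\; \text{II}\uparrow \PI_\alpha(A,B) \;\Rightarrow\; A \equiv_\alpha B,
\]
where the only non-trivial content lies in the middle implication, already proved as Theorem~\ref{thm:gamesK0alg} via Elliott's intertwining combined with Lemma~\ref{lemma:classification}. Since that work has been done, there is no remaining obstacle and the corollary follows. The only point worth flagging explicitly in the write-up is why Theorem~\ref{thm:EFD} applies, namely the countability of the $K_0$-groups of separable AF-algebras.
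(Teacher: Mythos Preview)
Your proof is correct and follows exactly the same three-step chain as the paper's own argument: apply Theorem~\ref{thm:EFD} to pass from $\equiv_{\omega\cdot\alpha}$ to a winning strategy in $\EFD_{\omega\cdot\alpha}$, then Theorem~\ref{thm:gamesK0alg} to obtain a winning strategy in $\PI_\alpha$, and finally Proposition~\ref{prop:PI} to conclude $A\equiv_\alpha B$. Your explicit remark on the countability of the $K_0$-groups, justifying the applicability of Theorem~\ref{thm:EFD}, is a welcome detail that the paper leaves implicit.
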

\begin{proof}
By Theorem~\ref{thm:EFD}, we know that $\text{II}\uparrow\EFD_{\omega \cdot \alpha}(K_0(A), K_0(B))$. By Theorem~\ref{thm:gamesK0alg} this gives $\text{II}\uparrow\PI_\alpha(A,B)$. Finally, Proposition~\ref{prop:PI} yields the desired conclusion.
\end{proof}

\subsection{AF-algebras with arbitrarily high Scott Rank}\label{SS.groups}

In this section we build a family of simple unital AF-algebras $\{A_\alpha\}_{\alpha < \omega_1}$ such that
$A_\alpha \equiv_\alpha A_\beta$ but $A_\alpha \not \cong A_\beta$, for all $\alpha < \beta$, thus proving
 Theorem~\ref{mainthm:highrank}. Thanks to Corollary~\ref{cor:K0toalg}, this task is reduced to finding a family of simple dimension groups satisfying analogous relations. We exhibit such a family in Corollary~\ref{T.MainGps}, after having simplified further the problem to a question about linear orders.

We restrict our attention to the following class of groups.
\begin{definition}\label{DefOrd}
Let $X$ be a compact space, and set
\[
G_X := \{ f \colon X \to \mathbb Q \mid f \text{ is continuous with respect to the discrete topology on } \mathbb Q \}.
\]
We define two order relations on $G_X$. Given $g_0$ and $g_1$ in $G_X$, write
\[
 g_0\leq g_1 \iff g_0(x) \leq g_1(x), \ \forall x \in X,
\]
and
\[
g_0\ll g_1 \iff g_0=g_1 \text{ or } g_0(x) < g_1(x), \ \forall x \in X.
\]
These orders endow $G_X$ with a structure of ordered group with the following positive cones 
\[
(G_X)_{+}=\{g\in G_X\mid 0\leq g\}\text{ and }(G_X)_{+,\ll}=\{g\in G_X\mid 0\ll g\}.
\]
\end{definition}

If $X$ is metrizable, by compactness, the group $G_X$ is countable. Moreover the groups $(G_X,(G_X)_{+})$ and $(G_X,(G_X)_{+,\ll})$ are unperforated and satisfy the Riesz property, as $\mathbb Q$ satisfies these properties with both the usual order $\le$ and the strict one $\ll$. In particular, the two groups are dimension groups by Theorem~\ref{thm:dimgr}.

The constant function $1$ is an order unit in both $(G_X,(G_X)_{+})$ and $(G_X,(G_X)_{+,\ll})$, but note that $(G_X,(G_X)_{+})$ is not a simple group, even for $X=\{0,1\}$, as the identity function on $\{0,1\}$ belongs to $(G_X)_{+}$, but is not an order unit for $(G_X,(G_X)_{+})$. The main reason for considering the order $\ll$ on $G_X$ is exactly that it endows it with a structure of simple dimension group.
\begin{lemma}\label{L.dimension}
Let $X$ be a compact metrizable space. The ordered group $(G_X,(G_X)_{+,\ll})$ is a simple dimension group.
\end{lemma}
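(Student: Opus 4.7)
The plan is to prove the two parts of the claim separately: first that $(G_X, (G_X)_{+,\ll})$ is a dimension group, then that it is simple. The key observation used throughout is that any $g \in G_X$ has finite image, since $g(X)$ is a compact subset of $\mathbb Q$ in the discrete topology.

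For the dimension group part, I would start by verifying that $(G_X)_{+,\ll}$ is a positive cone in the sense of Definition~\ref{def:gg+}. Closure under addition is immediate from the definition of $\ll$. The equality $(G_X)_{+,\ll} - (G_X)_{+,\ll} = G_X$ follows by writing any $g \in G_X$ as $(g + N) - N$ for an integer $N$ larger than $\max_{x \in X}|g(x)|$, which exists by the finite-image observation. Finally, $(G_X)_{+,\ll} \cap (-(G_X)_{+,\ll}) = \{0\}$ because a nonzero element of $(G_X)_{+,\ll}$ is strictly positive pointwise, so its negative cannot also lie in $(G_X)_{+,\ll}$. The unperforated and Riesz interpolation properties then transfer pointwise from $(\mathbb Q, <)$, as noted in the text preceding the lemma, and Theorem~\ref{thm:dimgr} yields that $(G_X, (G_X)_{+,\ll})$ is a dimension group.

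The substantive point is simplicity. Given a nonzero $g \in (G_X)_{+,\ll}$, the function $g$ is strictly positive everywhere, and since $g(X)$ is finite, $\varepsilon := \min_{x \in X} g(x)$ is a positive rational. For an arbitrary $f \in G_X$, the same finiteness argument gives $M := \max_{x \in X} |f(x)| < \infty$, and choosing $n \in \mathbb N$ with $n\varepsilon > M$ makes both $ng - f$ and $f + ng$ strictly positive everywhere, i.e., $-ng \leq f \leq ng$ in $(G_X, (G_X)_{+,\ll})$. Hence every nonzero element of $(G_X)_{+,\ll}$ is an order unit, which is exactly simplicity.

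I do not anticipate a serious obstacle; the only point requiring minor care is the interaction between the disjunctive definition of $\ll$ and the cone axioms, but the finite-image property of continuous $\mathbb Q$-valued functions on $X$ reduces everything to a pointwise check on $\mathbb Q$.
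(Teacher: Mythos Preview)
Your proof is correct and follows essentially the same approach as the paper: the dimension-group part is delegated to Theorem~\ref{thm:dimgr} (the paper leaves the cone axioms implicit while you spell them out), and the simplicity argument is identical, using compactness/finite image to bound $|f|$ above and $g$ away from zero, then choosing $n$ large enough.
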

\begin{proof}
The fact that $(G_X,(G_X)_{+,\ll})$ is a dimension group is a consequence of Theorem~\ref{thm:dimgr}. To show simplicity, fix $g\in (G_X)_{+,\ll}$ and $f \in G_X$. By compactness of $X$ there are rational numbers $M > 0$ and $\e>0$ such that $\lvert f(x) \rvert < M$ and $g(x)>\e$ for all $x\in X$. Let $n\in\N$ with $n>M/\e$. Then $-ng \ll f \ll ng$. Since $f$ and $g$ are arbitrary, we are done.
\end{proof}
The next proposition shows that, in case $X$ is 0-dimensional, we can recover the topology of $X$ from either of the groups $(G_X,(G_X)_{+})$ and $(G_X,(G_X)_{+,\ll})$.

\begin{proposition}\label{PropSpaceHomeo}
Let $X$ and $Y$ be $0$-dimensional compact spaces. The following are equivalent:
\begin{enumerate}
\item\label{c.spgp1} $X$ and $Y$ are homeomorphic;
\item\label{c.spgp2} $(G_X,(G_X)_{+,\ll})\cong (G_Y,(G_Y)_{+,\ll})$;
\item\label{c.spgp3} $(G_X,(G_X)_{+})\cong (G_Y,(G_Y)_{+})$.
\end{enumerate}
\end{proposition}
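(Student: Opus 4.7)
The implications $(1) \Rightarrow (2)$ and $(1) \Rightarrow (3)$ are immediate: a homeomorphism $\phi \colon X \to Y$ induces the pullback $\phi^* \colon G_Y \to G_X$, $f \mapsto f \circ \phi$, which is a group isomorphism sending $1_Y$ to $1_X$ and preserving both $\leq$ and $\ll$, since for a bijection $\phi$ the pointwise inequality $f(y) \geq g(y)$ for all $y \in Y$ is equivalent to $(\phi^*f)(x) \geq (\phi^*g)(x)$ for all $x \in X$, and likewise for strict inequalities. For the reverse directions, the plan is to reconstruct the Boolean algebra $\Clop(X)$ of clopen subsets of $X$ from the given ordered group data, and then to conclude via Stone duality for $0$-dimensional compact Hausdorff spaces.

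For $(3) \Rightarrow (1)$, fix any order unit $u \in (G_X)_+$ (for instance $u = 1$) and consider
\[
\mathcal{C}^u_X := \left\{ g \in G_X :\ 0 \leq g \leq u \text{ and, for every } h \in G_X,\ 0 \leq h \leq g \text{ and } h \leq u - g \text{ imply } h = 0 \right\}.
\]
The defining condition forces $g(x) \in \{0, u(x)\}$ for every $x \in X$: were $g$ to take a value $c \in (0, u(x_0))$ on a non-empty clopen $V$, then $\min(c, u(x_0) - c)\cdot \chi_V$ would be a non-zero witness to the failure of the condition. Since every order unit of $G_X$ must be strictly positive, the map $g \mapsto \{x : g(x) \neq 0\}$ is an order-preserving bijection from $\mathcal{C}^u_X$ onto $\Clop(X)$. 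Any ordered group isomorphism $\phi \colon (G_X, (G_X)_+) \to (G_Y, (G_Y)_+)$ therefore sends $\mathcal{C}^u_X$ onto $\mathcal{C}^{\phi(u)}_Y$, yielding a Boolean algebra isomorphism $\Clop(X) \cong \Clop(Y)$, and hence $X \cong Y$ by Stone duality.

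For $(2) \Rightarrow (1)$, I plan to first recover $(G_X)_+$ from $(G_X)_{+,\ll}$ within the ordered group structure, thus reducing to the previous case. Since $G_X$ is divisible and torsion-free, for each $n \geq 1$ there is a unique $h_n \in G_X$ with $n h_n = u$, where $u$ is a fixed order unit. I claim
\[
g \in (G_X)_+ \iff g + h_n \in (G_X)_{+, \ll} \text{ for every } n \geq 1.
\]
The forward direction is immediate since $h_n$ is strictly positive everywhere. Conversely, if $g(x_0) < 0$, then compactness of $X$ bounds $u$ (and hence each $h_n$) uniformly, so $g(x_0) + h_n(x_0) < 0$ for all sufficiently large $n$; the alternative $g + h_n = 0$ pins down $g = -h_n$ and so can occur for at most one $n$. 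This description is preserved by any ordered group isomorphism $\phi\colon (G_X, (G_X)_{+,\ll}) \to (G_Y, (G_Y)_{+,\ll})$, which sends $h_n$ to the unique $h_n' \in G_Y$ with $n h_n' = \phi(u)$. Hence $\phi$ also respects the $\leq$-cones, reducing $(2)$ to $(3)$.

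The main subtlety I anticipate is verifying that the definitions of $\mathcal{C}^u_X$ and of $(G_X)_+$ are genuinely intrinsic to the respective ordered group structures; this is ensured by their presentation as universally quantified formulas in the group operation and positive cone alone, independently of any choice of a distinguished order unit. Throughout the plan I rely on the standing assumption that the $0$-dimensional compact spaces in question are Hausdorff, so that Stone duality identifies them with their Boolean algebras of clopen subsets.
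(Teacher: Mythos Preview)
Your proof is correct and follows the same overall architecture as the paper's: the trivial implications $(1)\Rightarrow(2),(3)$, then $(2)\Rightarrow(3)$ by showing that a $\ll$-isomorphism automatically respects $\leq$, and finally $(3)\Rightarrow(1)$ by reconstructing $\Clop(X)$ from the ordered group and invoking Stone duality. The differences lie in the mechanics of the last two steps.

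For $(2)\Rightarrow(3)$, the paper argues directly: if $g\ge 0$ but $\Phi(g)(y)<0$, choose a small rational $\varepsilon>0$ with $\Phi(g)(y)+\varepsilon\Phi(1)(y)<0$; since $g+\varepsilon\gg 0$ this contradicts $\Phi(g+\varepsilon)\in(G_Y)_{+,\ll}$. Your route encodes the same perturbation idea as an intrinsic characterisation $g\ge 0\iff g+u/n\in(G_X)_{+,\ll}$ for all $n$. The paper's formulation is marginally stronger, since it applies to any positive homomorphism, not only isomorphisms. For $(3)\Rightarrow(1)$, the paper quotients $(G_X)_+$ by the Archimedean-type relation $g_0\sim g_1\iff\exists n\,(g_0\le ng_1\wedge g_1\le ng_0)$, which identifies elements with the same zero set, and obtains $(\Clop(X),\subseteq)$ as the quotient poset. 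You instead single out the ``characteristic elements'' $\mathcal{C}^u_X$ relative to a chosen order unit and identify them with clopen sets directly. Both constructions are definable in the ordered-group language, so either transfers under isomorphism; your version is a bit more concrete, while the paper's avoids any choice of order unit. One small point: in your argument that $g(x_0)=c\in(0,u(x_0))$ produces a witness $h$, the clopen $V$ should be taken inside the common level set $g^{-1}(c)\cap u^{-1}(u(x_0))$ so that $u-g$ is constant on $V$; otherwise $h\le u-g$ need not hold on all of $V$.
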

\begin{proof}
That \eqref{c.spgp1} implies both \eqref{c.spgp2} and \eqref{c.spgp3} is obvious, so we are left with proving that \eqref{c.spgp2}$\Rightarrow$\eqref{c.spgp3} and that \eqref{c.spgp3}$\Rightarrow$\eqref{c.spgp1}.

\eqref{c.spgp2}$\Rightarrow$\eqref{c.spgp3}. It suffices to show that every positive group homomorphism
\[
\Phi\colon(G_X,(G_X)_{+,\ll})\to (G_Y,(G_Y)_{+,\ll})
\]
also satisfies $\Phi((G_X)_+) \subseteq (G_Y)_+$. Suppose that $g \in (G_X)_+$. If $0\leq \Phi(g)$ were false, there would exist $y \in Y$ such that $\Phi(g)(y)<0$. Choose a rational $\e>0$ such that $\Phi(g)(y) + \e\Phi(1)(y) <0$, which implies $\Phi(g+\e)(y) <0$. This means that $\Phi(g +\e) \not \gg 0$, which contradicts $g+\e \gg 0$.

\eqref{c.spgp3}$\Rightarrow$\eqref{c.spgp1}. Let $\Phi\colon (G_X,(G_X)_{+})\to (G_Y,(G_Y)_{+})$ be an isomorphism. If $g \in G_X$, let
\[
Z_g := \{ x \in X \mid g(x) = 0 \}.
\]
As $\mathbb Q$ is considered with the discrete topology, this is a clopen subset of $X$. Let $\sim_X$ be the equivalence relation on $(G_X)_+$ defined by
\[
g_0\sim_X g_1 \iff\exists n\in\N (g_0\leq ng_1\wedge g_1\leq ng_0).
\]
Equivalently, $g_0 \sim_X g_1$ if and only if $Z_{g_0} = Z_{g_1}$. On $((G_X)_+/\sim_X)$ let 
\[
[g_0]\leq_X[g_1]\iff \exists n\in\N (g_1\leq ng_0),
\]
which is equivalent to $Z_{g_0} \subseteq Z_{g_1}$. Since $\Phi$ is $\leq$ preserving, it maps $\sim_X$ equivalent pairs to $\sim_Y$ equivalent pairs. Similarly, if $[g_0]\leq_X[g_1]$ then $[\Phi(g_0)]\leq_Y[\Phi(g_1)]$. Since $\Phi$ is an isomorphism, it induces an isomorphism of ordered sets
\[
\tilde\Phi\colon ((G_X)_+/\sim_X,\leq_X)\to ((G_Y)_+/\sim_Y,\leq_Y).
\]
Noticing that $((G_X)_+/\sim_X,\leq_X)\cong (\Clop(X),\subseteq)$, where $\Clop(X)$ is the Boolean algebra of clopen subsets of $X$, and that a similar isomorphism holds for $Y$, we have that the posets $(\Clop(X), \subseteq)$ and $(\Clop(Y), \subseteq)$ are isomorphic. By Stone duality, $X$ and $Y$ are homeomorphic.
\end{proof}

Notice that the presence of clopen sets is essential in Proposition~\ref{PropSpaceHomeo}. If, for example, $X$ is connected, then all continuous functions $X\to\mathbb Q$ (where $\mathbb Q$ is considered with the discrete topology) are constant, and so in this case $G_X=\mathbb Q$. 

Prototypical examples of metrizable compact $0$-dimensional spaces are successor countable ordinals endowed with the order topology. Given $\beta < \omega_1$, let 
\[
\mathcal G_{\beta+1}:=(G_{\beta+1}, (G_{\beta+1})_{+,\ll},1),
\]
$1$ being the constant function.
\begin{notation}
If $\beta$ is an ordinal, a clopen interval partition $\{U_i\}_{i \in I}$ of $\beta+1$ is completely determined by an increasing sequence of ordinals
\[
0= \beta_0<\cdots<\beta_{k} = \beta,
\]
with $U_{0} = [\beta_0, \beta_1]$, and $U_i = [\beta_i + 1, \beta_{i+1}]$. We denote such a partition by the ordered set $\{\beta_0 , \ldots, \beta_{k} \}$. Inclusions between $\{\beta_i\}_{i\leq k}$ and $\{\gamma_j\}_{j\leq k'}$ correspond to refinements of the associated clopen interval partitions.

If $\{\beta_0,\ldots,\beta_k\}$ is an interval partition of $\beta+1$, we let $\mathcal{S} (\{\beta_0, \ldots ,\beta_k \}) \subseteq \mathcal G_{\beta+1}$ be the subgroup of all functions which are constant on the intervals $[\beta_i+1,\beta_{i+1}]$. $\mathcal{S} (\{\beta_0, \ldots ,\beta_k \} )$ is isomorphic to $\mathbb Q^k$, and each element of $\mathcal G_{\beta+1}$ belongs to $\mathcal S(\{\beta_0,\ldots,\beta_{k}\})$ for some interval partition $\{\beta_0,\ldots,\beta_k\}$.
\end{notation}

In what follows, we interpret ordinals as linear orders in the language with a binary relation $\{<\}$. Unital dimension groups are considered as pointed ordered groups with respect to the signature $\mathcal{L}_{K_0, u}$ introduced in Remark \ref{remark:interpret}.

In analogy with Corollary~\ref{cor:K0toalg}, the next proposition shows that in order to study the relation $\equiv_\alpha$ on the group $\mathcal G_{\beta +1}$, it is enough to understand elementary equivalence on the linear orders $\beta +1$. The idea is the same as in Theorem~\ref{thm:gamesK0alg}. Since $\equiv_\alpha$ can be characterized in terms of Dynamic EF-games, by using Theorem~\ref{thm:EFD} twice, it suffices to show that if Player II knows how to win $\EFD_{{\omega}\cdot{\alpha}}(\beta+1,\gamma+1)$, then they can devise a winning strategy for $\EFD_{\alpha}(\mathcal G_{\beta+1}, \mathcal G_{\gamma+1})$.
	
\begin{proposition}\label{prop:Transfer}
For all countable ordinals $\alpha$, $\beta$, and $\gamma$, we have
\[
\beta+1 \equiv_{{\omega}\cdot{\alpha}} \gamma + 1 \Rightarrow 	\mathcal G_{\beta+1} \equiv_\alpha \mathcal G_{\gamma+1}. 
\]
\end{proposition}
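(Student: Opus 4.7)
The plan is to mimic, in this purely combinatorial and group-theoretic setting, the strategy-transfer scheme of Theorem~\ref{thm:gamesK0alg}. By two applications of Theorem~\ref{thm:EFD} it suffices to establish
\[
\text{II}\uparrow\EFD_{\omega\cdot\alpha}(\beta+1,\gamma+1)\Rightarrow\text{II}\uparrow\EFD_\alpha(\mathcal G_{\beta+1},\mathcal G_{\gamma+1}).
\]
So fix a winning strategy $\tau$ for Player~II on the left, and describe a winning strategy $\sigma$ for Player~II on the right. While playing $\sigma$, Player~II maintains in parallel a simulated match of $\EFD_{\omega\cdot\alpha}(\beta+1,\gamma+1)$ in which they themselves answer according to $\tau$.

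In round $k$ of the group game, suppose Player~I plays $(\alpha_k,f_k)$ with $f_k\in\mathcal G_{\beta+1}$ (the opposite case being symmetric). Using the notation preceding the proposition, Player~II picks an interval partition $\{\beta_0^{(k)}<\dots<\beta_{m_k}^{(k)}=\beta\}$ of $\beta+1$ with $f_k\in\mathcal S(\{\beta_0^{(k)},\dots,\beta_{m_k}^{(k)}\})$, and simulates $m_k+2$ Player-I moves in the ordinal game: the ordinals $\beta_0^{(k)},\dots,\beta_{m_k}^{(k)}$ on the $\beta+1$-side and the single element $\gamma$ on the $\gamma+1$-side, with strictly decreasing clock values chosen in the interval $[\omega\cdot\alpha_k,\omega\cdot\alpha_k+\omega)$. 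All these values are strictly below the previous round's clock $\omega\cdot\alpha_{k-1}$ because $\alpha_k<\alpha_{k-1}$ and therefore $\omega\cdot\alpha_k+\omega\leq\omega\cdot\alpha_{k-1}$. Let $\gamma_0^{(k)},\dots,\gamma_{m_k}^{(k)}\in\gamma+1$ and $\beta'\in\beta+1$ be the corresponding $\tau$-responses. Since $\tau$ is winning, the running correspondence must remain an order-preserving partial isomorphism, and the explicit inclusion of both maxima $\beta$ and $\gamma$ forces $\gamma_{m_k}^{(k)}=\gamma$ and $\beta'=\beta$. Thus $\{\gamma_0^{(k)}<\dots<\gamma_{m_k}^{(k)}=\gamma\}$ is an interval partition of $\gamma+1$ order-isomorphic to the $\beta$-side partition. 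Player~II then plays as $g_k$ the unique element of $\mathcal G_{\gamma+1}$ which is constant on each interval of the common refinement (on the $\gamma$-side) of all the partitions played so far, taking on each such interval the value of $f_k$ on the corresponding interval of the common refinement on the $\beta$-side.

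To verify that $\sigma$ is winning, observe that after all $n$ rounds the common refinements $P_\beta$ and $P_\gamma$ of the partitions on the two sides are interval partitions of $\beta+1$ and $\gamma+1$ matched by the order-preserving bijection of intervals supplied by $\tau$. The transport map $\Psi\colon\mathcal S(P_\beta)\to\mathcal S(P_\gamma)$ sending a function to its copy across this matching is an isomorphism of unital ordered groups: it clearly preserves $+$ and the unit $1$, and it preserves the positive cone $(\cdot)_{+,\ll}$ because $\ll$ reduces to strict positivity of the rational values on each interval. By construction $\Psi(f_k)=g_k$ for every $k$, so $\Psi$ restricts to an isomorphism of the $\mathcal L_{K_0,u}$-substructures $\langle f_0,\dots,f_{n-1}\rangle\cong\langle g_0,\dots,g_{n-1}\rangle$, which is the winning condition for Player~II. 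The main subtlety lies in the bookkeeping: the multiplicative factor $\omega$ in $\omega\cdot\alpha$ is precisely what gives Player~II the budget to make finitely many simulated moves per round of the group game, while the device of playing the opposite maximum in each simulated round is what guarantees that the order-preserving bijection produced by $\tau$ genuinely aligns interval partitions with interval partitions.
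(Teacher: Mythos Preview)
Your argument follows the same route as the paper's: reduce via Theorem~\ref{thm:EFD} to a transfer of winning strategies, simulate a finite block of ordinal-game moves per round of the group game using the factor $\omega$ as budget, and transport functions across the resulting matched interval partitions. Your additional device of playing $\gamma$ on the $\gamma+1$-side to force $\gamma_{m_k}^{(k)}=\gamma$ is a pleasant piece of bookkeeping that the paper's own proof leaves implicit.

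There is, however, a small gap symmetric to the one you just fixed. For $\{\gamma_0^{(k)}<\dots<\gamma_{m_k}^{(k)}=\gamma\}$ to be an interval partition of $\gamma+1$ in the paper's sense (so that $\mathcal S(P_\gamma)$ has the right number of pieces and your transport map $\Psi$ is a well-defined isomorphism), you also need $\gamma_0^{(k)}=0$. An order-preserving partial bijection between finite subsets does not by itself send the minimum of the ambient order to the minimum of the other, so the sentence ``Thus $\{\gamma_0^{(k)}<\dots<\gamma_{m_k}^{(k)}=\gamma\}$ is an interval partition of $\gamma+1$'' is not yet justified. The repair is immediate and can be done in either of two ways: insert one further simulated Player-I move, namely $0$ on the $\gamma+1$-side, and rerun your maxima argument at the bottom; or observe that since $\beta_0^{(k)}=0$ is played with strictly positive clock (later moves in the same block remain), a winning $\tau$ is forced to answer with $0$, for otherwise Player~I could next play $0\in\gamma+1$ and no response in $\beta+1$ would keep the correspondence order-preserving.
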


\begin{proof}
By Theorem~\ref{thm:EFD} used twice, it is enough to show that if $\text{II} \uparrow \EFD_{\omega\cdot\alpha}(\beta+1,\gamma+1)$ then $\text{II} \uparrow\EFD_\alpha(\mathcal G_{\beta+1}, \mathcal G_{\gamma+1})$.

Suppose Player I plays $(\alpha_0,g_0) \in \alpha \times \mathcal G_{\beta+1}$ (the case where Player I plays $(\alpha_0,g_0) \in \alpha \times \mathcal G_{\gamma+1}$ is analogous). Let $\{ \beta^0_0, \ldots, \beta^0_{m_0} \}$ be an interval partition of $\beta+1$ such that $g_0 \in \mathcal{S} (\{\beta^0_0,\ldots ,\beta^0_{m_0} \} )$. We start an auxiliary match of $\EFD_{\omega \cdot \alpha}(\beta+1,\gamma+1)$ with Player~I playing the following $m_0 +1$ moves
\[
({\omega}\cdot{\alpha_0}+m_0,\beta^0_0), \ldots, ({\omega}\cdot{\alpha_0},\beta^0_{m_0}).
\]
Since $\text{II} \uparrow \EFD_{\omega\cdot\alpha}(\beta+1,\gamma+1)$, Player II answers to Player I's moves according to their strategy, with some $\gamma^0_0 , \ldots , \gamma^0_{m_0} < \gamma +1$. As a consequence the map $\beta_i^0\mapsto \gamma_i^0$ defines an isomorphism of linear orders which in turn induces an isomorphism of unital ordered groups
\[
\iota_0\colon \mathcal{S} (\{\beta^0_0,\ldots ,\beta^0_{m_0} \} )\to \mathcal{S} (\{\gamma^0_0, \ldots ,\gamma^0_{m_0} \} ).
\]
Player II's move for round $k = 0$ of the ongoing match of $\EFD_{\alpha}(\mathcal G_{\beta+1}, \mathcal G_{\gamma+1})$ is $h_0 :=\iota_0(g_0)$.

The successive rounds are played analogously. If round $k$ of $\EFD_\alpha(\mathcal G_{\beta+1},\mathcal G_{\gamma+1})$ has been played with moves
\[
(\alpha_0, g_0, h_0), \ldots, (\alpha_k,g_k,h_k),
\]
then there are two families of ordinals $\{\beta_j^{i}\}_{i\leq k, j\leq m_i}$ and $\{\gamma_j^{i}\}_{i\leq k, j\leq m_i}$ such that the map $\beta_j^i\mapsto\gamma_j^i$ is an order isomorphism (these are obtained from the auxiliary match of $\EFD_{\omega \cdot \alpha}(\beta+1,\gamma+1)$). This map induces an isomorphism 
\[
\iota_k\colon \mathcal S(\{\beta_j^{i}\}_{i\leq k, j\leq m_i})\to \mathcal S(\{\gamma_j^{i}\}_{i\leq k, j\leq m_i}),
\]
which maps 1 to 1 and is such that $\iota_k(g_j) = h_j$ for every $j \le k$. Now suppose $k$ is such that $\alpha_k =0$. The isomorphism $\iota_k$ witnesses that Player II has won this match of $\EFD_\alpha(\mathcal G_{\beta+1},\mathcal G_{\gamma+1})$.
\end{proof}	

The following result is due to C.\ Karp (see also \cite[Theorem~7.26]{van_games}).

\begin{theorem}[{{\cite[Theorem~3]{karp}}}]\label{ThKarp}
If $\varepsilon$ is an ordinal satisfying $\varepsilon = \omega^\varepsilon$, then for every ordinal $\delta\neq 0$ we have $\varepsilon \equiv_\varepsilon \varepsilon\cdot \delta$.
\end{theorem}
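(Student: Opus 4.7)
The plan is to apply the EF-game characterization (Theorem~\ref{thm:EFD}) and exhibit a winning strategy for Player~II in $\EFD_\varepsilon(\varepsilon, \varepsilon\cdot\delta)$, with both ordinals viewed as linear orders. Since $\varepsilon = \omega^\varepsilon$ is a limit ordinal and $\text{II}\uparrow \EFD_\alpha$ at a limit $\alpha$ iff $\text{II}\uparrow\EFD_\beta$ for every $\beta<\alpha$, the task reduces to showing $\varepsilon \equiv_\alpha \varepsilon\cdot\delta$ for every $\alpha < \varepsilon$.

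The central step is the following strengthened lemma, which I would prove by transfinite induction on $\alpha$: \emph{for every $\tau < \omega^\alpha$ and every pair of ordinals $\mu,\nu \geq \omega^\alpha$, one has $\omega^\alpha\cdot\mu + \tau \equiv_\alpha \omega^\alpha\cdot\nu + \tau$.} Karp's theorem then follows by setting $\tau = 0$, $\mu = \omega^\varepsilon$, $\nu = \omega^\varepsilon\cdot\delta$: for each $\alpha<\varepsilon$ the identity $\alpha+\varepsilon = \varepsilon$ yields $\omega^\alpha\cdot\omega^\varepsilon = \omega^\varepsilon = \varepsilon$ and $\omega^\alpha\cdot(\omega^\varepsilon\cdot\delta) = \varepsilon\cdot\delta$, while $\omega^\varepsilon \geq \omega^\alpha$, so the lemma gives $\varepsilon \equiv_\alpha \varepsilon\cdot\delta$.

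The base case $\alpha=0$ of the lemma is trivial, and the limit case reduces to the IH at every rank $\beta<\alpha$ via the factorization $\omega^\alpha = \omega^\beta\cdot\omega^\gamma$ (with $\beta+\gamma=\alpha$) and the unique decomposition $\tau = \omega^\beta\cdot p + q$ with $p<\omega^\gamma$, $q<\omega^\beta$, which lets me rewrite both ordinals as $\omega^\beta\cdot(\omega^\gamma\mu + p) + q$ with $\omega^\gamma\mu + p \geq \omega^\alpha \geq \omega^\beta$. The successor case $\alpha+1$ is the heart of the argument: writing $\tau = \omega^\alpha k+s$ with $k<\omega$ and $s<\omega^\alpha$ recasts the ordinals as $A = \omega^\alpha(\omega\mu+k)+s$ and $B = \omega^\alpha(\omega\nu+k)+s$, and against a move $x = \omega^\alpha\xi + r$ by Player~I in $A$, Player~II replies with $y = \omega^\alpha\xi' + r$ in $B$, selecting $\xi'$ in $\omega\nu + k$ according to the positional type of $\xi$---namely $\xi' = \xi$ when $\xi<\omega^\alpha$; $\xi' = \omega\nu+n$ when $\xi = \omega\mu+n$ for some $n<k$; and any value in $[\omega^\alpha,\omega\nu)$ preserving order with previously played points when $\xi \in [\omega^\alpha,\omega\mu)$. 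A short computation using the additive indecomposability $\omega^\alpha + r = \omega^\alpha$ (valid for $r<\omega^\alpha$) together with the hypothesis $\mu,\nu\geq\omega^{\alpha+1}$ shows that both the initial segments $[0,x),[0,y)$ and the terminal segments $(x,A),(y,B)$ take the form $\omega^\alpha\cdot(\geq \omega^\alpha)+(\text{shared tail below }\omega^\alpha)$ covered by the IH at rank $\alpha$. Since after Player~II's first-round response the game splits into two independent sub-games on the initial and terminal segments (each winnable via the IH at rank $\alpha$, hence at any clock $\beta\leq\alpha$ that Player~I chooses), Player~II wins the original game. The round-by-round description is formalized as a back-and-forth system of finite partial order-isomorphisms that Player~II maintains throughout the match.

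The main obstacle is the combinatorial case analysis underlying the successor step, together with the need to verify global coherence of Player~II's strategy across rounds: each new $\xi'$ must both preserve order with all earlier plays and keep the newly cut sub-intervals in the scope of the IH. The strengthened hypothesis $\mu,\nu\geq\omega^\alpha$ (in place of the naive $\mu,\nu\geq 1$) is what enables this, as it guarantees an $\omega^\alpha$-sized supply of block positions in each ordinal from which Player~II can safely draw at any stage of the game.
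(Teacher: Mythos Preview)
The paper does not prove this theorem; it is quoted from Karp's article and used as a black box in Corollary~\ref{T.MainGps}. So there is no proof in the paper to compare against, and your sketch is offering more than the paper does.

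Your overall plan---reduce to ranks $\alpha<\varepsilon$ and prove an auxiliary lemma of the form $\omega^\alpha\mu+\tau\equiv_\alpha\omega^\alpha\nu+\tau$ by induction on $\alpha$, then specialize---is exactly the classical route. The limit step and the deduction of Karp's statement are fine. The successor step, however, has a genuine gap, and it stems from your choice of hypothesis. You deliberately strengthen the lemma to $\mu,\nu\ge\omega^\alpha$, saying this is ``what enables'' the argument; in fact it disables it. Strengthening the hypothesis \emph{weakens} the inductive hypothesis you are allowed to invoke at rank $\alpha$, and that weaker IH no longer covers the terminal segments you produce. Concretely, take $\mu=\omega^{\alpha+1}+1$ (so $\omega\mu=\omega^{\alpha+2}+\omega$) and let Player~I play $x$ with block index $\xi=\omega^{\alpha+2}$. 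Then $\eta=\omega\mu-\xi=\omega$, and the terminal segment of $A$ past $x$ is $\omega^\alpha(\omega+k)+s$; here $\omega+k<\omega^\alpha$ once $\alpha\ge2$, so your IH at rank $\alpha$ (which requires block count $\ge\omega^\alpha$) does not apply. Your blanket claim that both segments ``take the form $\omega^\alpha\cdot(\ge\omega^\alpha)+(\text{shared tail})$'' is therefore false in the middle case $\xi\in[\omega^\alpha,\omega\mu)$.

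The fix is to go the other way: prove the lemma for all $\mu,\nu\ge1$. With that (stronger) IH, the successor step is clean: for $x=\omega^\alpha\xi+r$ with $\xi<\omega\mu$, respond with $\xi'=0$ if $\xi=0$ and with any $\xi'\in[1,\omega\nu)$ otherwise; the initial segments are then $\omega^\alpha\xi+r$ and $\omega^\alpha\xi'+r$ (covered by the IH since $\xi,\xi'\ge1$, or equal when $\xi=0$), and the terminal segments are $\omega^\alpha(\eta+k)+s$ and $\omega^\alpha(\eta'+k)+s$ with $\eta,\eta'\ge1$, again covered. The tail cases $\xi=\omega\mu+n$ (for $n\le k$) are handled by mirroring, as you already indicate. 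No ``$\omega^\alpha$-sized supply of block positions'' is needed; the standard $\mu,\nu\ge1$ version is both easier to prove and already sufficient for the application, since $\varepsilon=\omega^\alpha\cdot\omega^\varepsilon$ with $\omega^\varepsilon\ge1$.
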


 We let $(\varepsilon_\alpha)_{\alpha<\omega_1}$ denote the continuous increasing sequence enumerating all countable ordinals $\varepsilon$ that satisfy $\omega^\varepsilon = \varepsilon$.

\begin{corollary}\label{T.MainGps}
The family $\{ \mathcal G_{\varepsilon_\alpha+1}\}_{\alpha<\omega_1}$ is such that
\[
\mathcal G_{\varepsilon_\alpha+1} \not \cong \mathcal G_{\varepsilon_\beta+1} \text{ and } \mathcal G_{\varepsilon_\alpha+1} \equiv_{\varepsilon_\alpha} \mathcal G_{\varepsilon_\beta+1}, \ \forall
\alpha < \beta < \omega_1.
\]
\end{corollary}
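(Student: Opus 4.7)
\emph{Approach.} The corollary has two halves, which I would prove by independently reducing each to previously established results plus standard ordinal arithmetic.

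For the pairwise non-isomorphism, I would invoke Proposition \ref{PropSpaceHomeo}: an isomorphism $\mathcal G_{\varepsilon_\alpha+1}\cong\mathcal G_{\varepsilon_\beta+1}$ of unital ordered groups yields in particular an isomorphism of the ordered groups $(G_{\varepsilon_\alpha+1},(G_{\varepsilon_\alpha+1})_{+,\ll})\cong(G_{\varepsilon_\beta+1},(G_{\varepsilon_\beta+1})_{+,\ll})$, hence by Proposition \ref{PropSpaceHomeo} a homeomorphism of the compact $0$-dimensional spaces $\varepsilon_\alpha+1$ and $\varepsilon_\beta+1$. However, by the Mazurkiewicz--Sierpi\'nski classification of countable compact metric spaces, each such space is homeomorphic to a unique $\omega^\gamma\cdot n+1$; since $\varepsilon_\alpha=\omega^{\varepsilon_\alpha}$, the ordinal $\varepsilon_\alpha+1$ has invariants $(\varepsilon_\alpha,1)$, and these are distinct for distinct $\alpha$. (Equivalently, a direct Cantor--Bendixson rank computation shows that the rank of $\varepsilon_\alpha+1$ is a strictly increasing function of $\alpha$.)

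For the $\varepsilon_\alpha$-equivalence, the plan is to apply Proposition \ref{prop:Transfer}, which reduces the problem to showing $\varepsilon_\alpha+1\equiv_{\omega\cdot\varepsilon_\alpha}\varepsilon_\beta+1$ as linear orders. Because $\varepsilon_\alpha=\omega^{\varepsilon_\alpha}$ is additively indecomposable, $\omega\cdot\varepsilon_\alpha=\omega^{1+\varepsilon_\alpha}=\omega^{\varepsilon_\alpha}=\varepsilon_\alpha$, so it suffices to prove $\varepsilon_\alpha+1\equiv_{\varepsilon_\alpha}\varepsilon_\beta+1$. Karp's Theorem \ref{ThKarp} gives $\varepsilon_\alpha\equiv_{\varepsilon_\alpha}\varepsilon_\alpha\cdot\delta$ for every non-zero ordinal $\delta$. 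Taking $\delta=\varepsilon_\beta$ and using that $\varepsilon_\alpha\cdot\varepsilon_\beta=\omega^{\varepsilon_\alpha+\varepsilon_\beta}=\omega^{\varepsilon_\beta}=\varepsilon_\beta$ (the middle equality uses $\varepsilon_\alpha<\varepsilon_\beta$ together with the additive indecomposability of $\varepsilon_\beta$), I obtain $\varepsilon_\alpha\equiv_{\varepsilon_\alpha}\varepsilon_\beta$. To upgrade this to $\varepsilon_\alpha+1\equiv_{\varepsilon_\alpha}\varepsilon_\beta+1$, I would run a short $\EFD$-game transfer: given a winning strategy $\bar\sigma$ for Player II in $\EFD_{\varepsilon_\alpha}(\varepsilon_\alpha,\varepsilon_\beta)$, define a strategy $\bar\sigma'$ for Player II in $\EFD_{\varepsilon_\alpha}(\varepsilon_\alpha+1,\varepsilon_\beta+1)$ by the rule ``mirror each top-element move of Player I with the top element of the other structure, and otherwise follow $\bar\sigma$ on the position obtained after deleting the top-element pairs''. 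Since the top element lies above every other element on both sides, the resulting partial map at the end of any play is still an order isomorphism between the generated substructures, so $\bar\sigma'$ is winning.

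The step I expect to require the most care is the last transfer (from $\equiv_{\varepsilon_\alpha}$ on $\varepsilon_\alpha,\varepsilon_\beta$ to $\equiv_{\varepsilon_\alpha}$ on $\varepsilon_\alpha+1,\varepsilon_\beta+1$), where one must be careful to define $\bar\sigma'$ at every legal position and to verify that the final partial map is order-preserving; conceptually this step is essentially free because the top element is first-order definable in a linear order, but Theorem \ref{thm:EFD} forces us to pass through an EF-game bookkeeping argument. The remaining ingredients -- namely $\omega\cdot\varepsilon_\alpha=\varepsilon_\alpha$, $\varepsilon_\alpha\cdot\varepsilon_\beta=\varepsilon_\beta$ for $\alpha<\beta$, and the topological non-homeomorphism of the spaces $\varepsilon_\alpha+1$ -- are standard and immediate.
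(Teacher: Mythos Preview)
Your proposal is correct and follows essentially the same route as the paper: both reduce the equivalence part to Karp's theorem via Proposition~\ref{prop:Transfer} and the identity $\omega\cdot\varepsilon_\alpha=\varepsilon_\alpha$, using $\varepsilon_\alpha\cdot\varepsilon_\beta=\varepsilon_\beta$ (the paper phrases this as multiplicative indecomposability of $\varepsilon_\beta$) and then passing from $\varepsilon_\alpha\equiv_{\varepsilon_\alpha}\varepsilon_\beta$ to the $+1$ versions, while the non-isomorphism part goes through Proposition~\ref{PropSpaceHomeo} and a Cantor--Bendixson/Mazurkiewicz--Sierpi\'nski argument. The only difference is cosmetic: you spell out the $+1$ transfer via an explicit $\EFD$-strategy, whereas the paper simply asserts this step.
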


\begin{proof}
Note that the ordinals $(\varepsilon_\alpha)_{\alpha < \omega_1}$ are multiplicatively indecomposable, that is\linebreak $\gamma \cdot \varepsilon_\alpha = \varepsilon_\alpha$ for every $\gamma < \varepsilon_\alpha$ and every $\alpha < \omega_1$. Fix $\alpha < \beta < \omega_1$. The previous observation along with Theorem~\ref{ThKarp} implies that $\varepsilon_\alpha \equiv_{\varepsilon_{\alpha}} \varepsilon_\beta$. This also entails that $\varepsilon_\alpha + 1 \equiv_{\varepsilon_{\alpha}} \varepsilon_\beta+1$. Since moreover ${\omega}\cdot{\varepsilon_\alpha} = \varepsilon_{\alpha}$, Proposition~\ref{prop:Transfer} allows to infer that
\[
\mathcal G_{\varepsilon_\alpha+1} \equiv_{\varepsilon_\alpha} \mathcal G_{\varepsilon_\beta+1}.
\]
To finish the proof, by Proposition~\ref{PropSpaceHomeo} it remains to check that $\varepsilon_\alpha+1$ and $\varepsilon_\beta+1$ are not homeomorphic whenever $\alpha< \beta$. This can be checked using a Cantor-Bendixson argument (or by directly invoking either \cite{kieftenbeld} or \cite[Proposition~1.6.6]{goncharov}). 
\end{proof}
 
After having reduced the problem to a statement about linear orders, we climb back to \cstar-algebras and we obtain the following.

\begin{corollary} \label{corollary:scott}
Given $\alpha < \omega_1$, let $A_\alpha$ be the simple unital AF-algebra whose dimension group is $\mathcal G_{\varepsilon_\alpha +1}$. Then $\{ A_\alpha\} _{\alpha < \omega_1}$ is a collection such that $A_\alpha \not \cong A_\beta$ and $A_\alpha \equiv_\alpha A_\beta$ for every $\alpha < \beta < \omega_1$.
\end{corollary}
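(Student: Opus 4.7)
The plan is to combine Corollary~\ref{T.MainGps} with Corollary~\ref{cor:K0toalg} and Elliott's classification of AF-algebras. By Lemma~\ref{L.dimension} each $\mathcal G_{\varepsilon_\alpha+1}$ is a simple dimension group, so Theorem~\ref{thm:dimgr} produces a simple unital AF-algebra whose unital ordered $K_0$-group is isomorphic to $\mathcal G_{\varepsilon_\alpha+1}$; Elliott's theorem guarantees that such an algebra is unique up to isomorphism, and we take it to be $A_\alpha$.

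For the non-isomorphism part, one observes that any isomorphism $A_\alpha \cong A_\beta$ would induce, via the $K_0$-functor, an isomorphism $\mathcal G_{\varepsilon_\alpha+1} \cong \mathcal G_{\varepsilon_\beta+1}$ of unital ordered groups, contradicting the first clause of Corollary~\ref{T.MainGps}.

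For the elementary equivalence part, the key step is the ordinal inequality $\omega \cdot \alpha \leq \varepsilon_\alpha$, valid for every $\alpha < \omega_1$. Indeed, the map $\alpha \mapsto \varepsilon_\alpha$ is strictly increasing on $\omega_1$, so $\alpha \leq \varepsilon_\alpha$; moreover $\varepsilon_\alpha = \omega^{\varepsilon_\alpha}$ is multiplicatively indecomposable, i.e.\ $\omega \cdot \varepsilon_\alpha = \varepsilon_\alpha$, whence $\omega \cdot \alpha \leq \omega \cdot \varepsilon_\alpha = \varepsilon_\alpha$. Corollary~\ref{T.MainGps} then gives $\mathcal G_{\varepsilon_\alpha+1} \equiv_{\varepsilon_\alpha} \mathcal G_{\varepsilon_\beta+1}$, and since agreement on sentences of rank at most $\varepsilon_\alpha$ is stronger than agreement on sentences of rank at most $\omega \cdot \alpha$, this already yields $\mathcal G_{\varepsilon_\alpha+1} \equiv_{\omega \cdot \alpha} \mathcal G_{\varepsilon_\beta+1}$. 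Feeding this into Corollary~\ref{cor:K0toalg} produces $A_\alpha \equiv_\alpha A_\beta$, which is exactly what we want.

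There is no substantial obstacle at this stage: the whole statement is a bookkeeping assembly of previously established ingredients, with the genuine work hidden in Corollary~\ref{T.MainGps} (the linear-order and dimension-group side, resting on Karp's theorem and Proposition~\ref{prop:Transfer}) and Corollary~\ref{cor:K0toalg} (the lifting of partial $K_0$-isomorphisms to partial isomorphisms of AF-algebras via the $\PI$-game transfer in Theorem~\ref{thm:gamesK0alg}). The only minor point worth spelling out carefully is the ordinal estimate $\omega \cdot \alpha \leq \varepsilon_\alpha$ above, so as to ensure the hypothesis of Corollary~\ref{cor:K0toalg} is met.
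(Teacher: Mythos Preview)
Your proposal is correct and follows essentially the same approach as the paper's proof: simplicity via Lemma~\ref{L.dimension}, non-isomorphism via Elliott's theorem and Corollary~\ref{T.MainGps}, and elementary equivalence via Corollary~\ref{T.MainGps} combined with Corollary~\ref{cor:K0toalg} and the ordinal estimate $\omega\cdot\alpha \le \varepsilon_\alpha$. The paper compresses the ordinal argument to the single phrase ``since $\omega\cdot\varepsilon_\alpha = \varepsilon_\alpha \ge \alpha$'', whereas you spell it out more carefully, but the content is identical.
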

\begin{proof}
Lemma~\ref{L.dimension} ensures that each $A_\alpha$ is simple. Remember that $A_\alpha \cong A_\beta$ if and only if $\mathcal G_{\varepsilon_\alpha +1} \cong \mathcal G_{\varepsilon_\beta +1}$, by the Elliott Classification Theorem~in \cite{Elliott.AF}. Hence, by Corollary~\ref{T.MainGps}, it follows that $A_\alpha \not \cong A_\beta$ for every $\alpha < \beta$. The part about elementarily equivalence follows by Corollary~\ref{T.MainGps} and Corollary~\ref{cor:K0toalg}, since $\omega \cdot \varepsilon_\alpha = \varepsilon_\alpha \geq \alpha$.
\end{proof}

We refer the reader to Appendix \ref{s:Bratteli} for an explicit computation of the Bratteli diagrams of these algebras.

\section{From \cstar-Algebras to $K_0$-Groups}\label{S.AlgtoK0}

The goal of this section is to prove a partial converse to Corollary~\ref{cor:K0toalg}, and to understand how much (infinitary) information regarding $K_0$ can be extracted from (infinitary) information about a given \cstar-algebra.

Let $\mathcal L_{\mathrm{C}^*,c} = \{+, \cdot, ^*, \{ \lambda\}_{\lambda \in \mathbb{C}},0, c\}$ be the language of pointed \cstar-algebras with a constant~$c$. Fix a non-zero minimal projection $q\in\mathcal K$. If $A$ is a unital \cstar-algebra, we interpret canonically $A\otimes\mathcal K$ as an $\mathcal L_{\mathrm{C}^*,c}$-structure by setting $c^{A\otimes K}=1_A\otimes q$.\footnote{The language $\mathcal L_{\mathrm{C}^*,c}$ is formally speaking exactly the same as $\mathcal L_{\mathrm{C}^*}$ from \S\ref{sss.metricinf}. Nevertheless, we decided to use different notations since we use the latter for unital \cstar-algebras, interpreting $u$ as 1, while the former is reserved to non-unital \cstar-algebras of the form $A \otimes \mathcal K$.} As for $V(A)$ (resp., $K_0(A)$), we will always view it as an $\mathcal L_{V,u}$ (resp., $\mathcal L_{K_0,u}$)-structure (see Remark~\ref{remark:interpret}).

The following is a generalization to $\mathcal L_{\omega_1\omega}$ of \cite[Theorem~3.11.1]{Muenster}, where the same statement is proved for first order logic.

\begin{theorem}\label{thm:algbstoK_0}
Let $A$ and $B$ be unital \cstar-algebras. If $\alpha<\omega_1$ and $A\otimes\mathcal K\equiv_{\omega + 2\cdot \alpha +2} B\otimes\mathcal K$, then $(K_0(A), K_0(A)_+,[1_A]) \equiv_{\alpha}(K_0(B), K_0(B)_+,[1_B])$. In particular, if $\alpha < \omega_1$ is a limit ordinal with $\alpha \geq \omega^2$ then
\[A\otimes\mathcal K\equiv_\alpha B\otimes\mathcal K \Rightarrow (K_0(A), K_0(A)_+,[1_A]) \equiv_{\alpha}(K_0(B), K_0(B)_+,[1_B]).\]
\end{theorem}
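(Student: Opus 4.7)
The plan is to construct, by recursion on $\mathcal L_{K_0,u}$-formulas, a translation $\phi \mapsto \phi^\star$ sending each discrete $\mathcal L_{K_0,u}$-formula $\phi(x_0,\ldots,x_{n-1})$ to an $\mathcal L_{\mathrm{C}^*,c}$-formula $\phi^\star(p_0,q_0,\ldots,p_{n-1},q_{n-1})$ in the continuous infinitary logic, of rank at most $\omega + 2\cdot \rk(\phi) + 2$, satisfying: for every unital \cstar-algebra $A$ and all projections $p_i, q_i \in A \otimes \mathcal K$, with $x_i := [p_i] - [q_i] \in K_0(A)$,
\[
\phi^{K_0(A)}(\bar x) \text{ holds} \iff \phi^{\star, A \otimes \mathcal K}(\bar p, \bar q) = 0.
\]
Once such a translation is available, the theorem follows immediately: applied to an $\mathcal L_{K_0,u}$-sentence $\phi$ of rank $\leq \alpha$, it produces a sentence $\phi^\star$ of rank $\leq \omega + 2\alpha + 2$; the hypothesis forces $\phi^{\star,A\otimes\mathcal K} = \phi^{\star,B\otimes\mathcal K}$, and hence $\phi^{K_0(A)} = \phi^{K_0(B)}$.

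The atomic building block is Murray--von~Neumann equivalence $p \sim q$ in $A \otimes \mathcal K$, which is expressible by the rank-$1$ formula $\inf_v(\|v^*v - p\|^2 + \|vv^* - q\|^2)$; since partial isometries and projections in $A \otimes \mathcal K$ have norm at most $1$, all relevant quantifications remain inside the unit ball. Exploiting the stability of $A \otimes \mathcal K$ to realize every sum $[p]+[q]$ in $V(A\otimes\mathcal K)$ by a genuine orthogonal sum of projections, each atomic $\mathcal L_{K_0,u}$-relation reduces, via the Grothendieck construction, to a formula with finitely many existentials. For example, $[p_i] - [q_i] = [p_j] - [q_j]$ is captured by the existence of a projection $r$ together with partial isometries witnessing the $V(A\otimes\mathcal K)$-identity $[p_i] + [q_j] + [r] = [p_j] + [q_i] + [r]$ (after passing to mutually orthogonal representatives). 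The $K_0$-order $x_i \leq x_j$ is handled analogously via the Grothendieck description of $K_0(A)_+$, the constant $u$ translates via the distinguished projection $c = 1_A \otimes q$, and arbitrary terms built from $+$ reduce to these basic shapes by introducing finitely many existentials for orthogonal copies. Each atomic translation thus has rank bounded by a uniform $k_0 < \omega$.

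Boolean and infinitary connectives translate via their continuous analogues $\max, \min, \sup_i, \inf_i$, which do not increase quantifier rank; validity of the resulting infinitary formulas in the continuous setting is automatic because all atomic building blocks share uniform continuity moduli and a common bounded range on the unit ball. Each $K_0$-quantifier $\exists x$ (resp.\ $\forall x$) becomes $\inf_{p,q}$ (resp.\ $\sup_{p,q}$) of the translated subformula plus a penalty term of the form $K(\|p^2 - p\|^2 + \|p - p^*\|^2 + \|q^2 - q\|^2 + \|q - q^*\|^2)$, where $K$ exceeds the range of the subformula, thereby forcing the $\inf$/$\sup$ to be carried by pairs of actual projections; this adds exactly $2$ to the quantifier rank per $K_0$-quantifier. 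A routine induction gives $\rk(\phi^\star) \leq k_0 + 2\cdot \rk(\phi)$, which, since $k_0$ is finite, is dominated by $\omega + 2\cdot \rk(\phi) + 2$ for every countable $\rk(\phi)$. The main technical obstacle lies in the backward direction of the correctness statement: the continuous infima a priori only furnish approximate witnesses, whereas the discrete semantics of $K_0(A)$ demands exact ones. This is resolved via the closedness of the sets of projections and partial isometries in $A\otimes\mathcal K$, together with a standard continuous-functional-calculus perturbation: the penalty terms force near-optimal choices to be arbitrarily close to genuine projections, which can then be replaced by actual projections with a controlled and, by uniform continuity, absorbable perturbation of the remaining data.
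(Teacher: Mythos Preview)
Your overall strategy coincides with the paper's: build a rank-controlled syntactic translation from $\mathcal L_{K_0,u}$-formulas to $\mathcal L_{\mathrm{C}^*,c}$-formulas so that satisfaction in $K_0(A)$ corresponds to the translated formula taking value $0$ in $A\otimes\mathcal K$. The paper factors this through the semigroup language $\mathcal L_{V,u}$ (first $K_0\rightsquigarrow V$, then $V\rightsquigarrow A\otimes\mathcal K$), but that is cosmetic; the rank bound $\omega+2\alpha+2$ arises the same way.

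There is, however, a genuine gap: you never treat negation. Your list of connective translations ($\max,\min,\sup_i,\inf_i$) covers $\wedge,\vee,\bigwedge,\bigvee$ but not $\neg$, and without a further invariant there is no continuous formula $\psi$ with $\psi=0$ iff $\phi^\star>0$. The paper resolves this by carrying through the recursion a \emph{uniform threshold property}: a single $\delta>0$ such that every translated $\psi$ satisfies $\psi(\bar p)<\delta\Rightarrow\psi(\bar p)=0$; then $(\neg\phi)^\star:=\delta\dotminus\phi^\star$ works. Establishing this at the atomic level needs more than perturbing one almost-projection: one must simultaneously perturb a tuple of approximately orthogonal almost-projections to a genuinely orthogonal family, uniformly in the number of summands, which is why the paper invokes Ulam stability of finite-dimensional \cstar-algebras rather than bare functional calculus.

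A second, related gap concerns the infinitary step. You assert that uniformity of moduli is ``automatic because all atomic building blocks share uniform continuity moduli,'' but the moduli must be preserved through the entire recursion. Your quantifier translation uses a penalty coefficient $K$ that ``exceeds the range of the subformula''; along a family $\{\phi_i\}$ of unbounded rank this $K$ can grow without bound, so $\bigvee_i\phi_i^\star$ need not be a legal $\mathcal L_{\omega_1\omega}$-formula. The paper avoids this by arranging every $\psi$ to be $1$-Lipschitz with range in a fixed interval, so that a single fixed penalty coefficient (the paper uses $3$, with a linear penalty $\max\{\|w-w^*\|,\|w-w^2\|\}$) works at every quantifier step. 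Your squared penalty $\|p^2-p\|^2+\dots$ also scales incorrectly for the $\sup$ case: an element at distance $\varepsilon$ from a projection incurs penalty $O(\varepsilon^2)$ while the inner formula can shift by $O(\varepsilon)$, so the penalty does not dominate near but not at projections.
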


We start by introducing some preliminary definitions and notation needed to prove Theorem~\ref{thm:algbstoK_0}. Let $n$ be a positive integer and let $\mathcal{B} := \{ e_i \}_{i < n}$ be the orthonormal basis of $\mathbb{C}^n$
\[
e_i = (0,\ldots,\underbrace{1}_i,\ldots,0).
\]
Whenever $A$ is a \cstar-algebra, we will write $A_1$ for the unit ball of $A.$
Let $\delta > 0$ and fix, once and for all, a finite subset $\{ b^{(n,\delta)}_h \}_{ h < J}$ in the unit ball of $\mathbb{C}^n$ such that for every $b \in \mathbb{C}^n_1$ there exists $h < J$ such that
\begin{equation} \label{eq:bh}
\| b - b^{(n, \delta)}_h \| < \delta/n.
\end{equation}
Without loss of generality we can assume that $\mathcal{B}$ is contained in such set. For every $h < J$ let $\{ \lambda^{(n,\delta,h)}_i \}_{i < n}$ be the coordinates of $b^{(n,\delta)}_h$ with respect to the basis $\mathcal{B}$, that is
\[
b_h = \sum_{i < n} \lambda^{(n,\delta,h)}_i e_i, \ \forall h < J.
\]
For $h_0, h_1 <J$ we define the following $^*$-polynomials in the variables $\bar x = (x_0, \dots, x_{n-1})$
\begin{align*}
P_{h_0}(\bar x) &:= \sum_{i < n} \lambda^{(n,\delta,h_0)}_i x_i, \\
P_{h_0, h_1}(\bar x) &:= \left( \sum_{i < n} \lambda^{(n,\delta,h_0)}_i x_i \right)\left( \sum_{i < n} \lambda^{(n,\delta,h_1)}_i x_i \right) - \sum_{i < n} \lambda^{(n,\delta,h_0)}_i\lambda^{(n,\delta,h_1)}_i x_i .
\end{align*}
Finally, define the $\mathcal{L}_{\mathrm{C}^\ast, c}$-formulas
\[
\phi_{n, \delta}(\bar x) := \max_{h_0,h_1 < J} \{ \| P_{h_0,h_1}(\bar x) \|, \| P_{h_0}(\bar x) \| \dotminus 1 \}, \forall n \in \N 	\setminus \{0 \}, \delta > 0,
\]
where $x \dotminus y$ is an abbreviation for $\max \{ 0, x- y \}$.

The following lemma is a consequence of Ulam stability for finite-dimensional \cstar-algebras (see \cite{ulam_mv}). The crucial aspect in the statement of the lemma is the fact that $\delta > 0$ can be chosen uniformly over all positive $n$.
\begin{lemma}\label{lemma:ulamstable}
For every $\e>0$ there is $\delta>0$ such that for every \cstar-algebra $A$ and all positive $n$, if $\bar a$ is an $n$-tuple of positive contractions in $A$ such that
\begin{equation} \label{eq:ai}
\phi_{n, \delta}(\bar a) < \delta,
\end{equation}
then there exist $b_0, \dots, b_{n-1} \in A$ such that $\| a_i - b_i \| < \e$ for all $i < n$ and such that
\[
\phi_{n, \delta}(\bar b) = 0.
\]
In particular $b_0, \dots, b_{n-1}$ are orthogonal projections.
\end{lemma}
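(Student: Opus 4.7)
The plan is to interpret the hypothesis $\phi_{n,\delta}(\bar a) < \delta$ as saying that the linear map $\tilde\pi\colon \C^n \to A$ defined by $e_i \mapsto a_i$ is an approximate unital $^*$-homomorphism whose failure of multiplicativity is $O(\delta)$ uniformly in $n$, and then invoke the Ulam stability for finite-dimensional \cstar-algebras from \cite{ulam_mv} to perturb $\tilde\pi$ to a genuine $^*$-homomorphism $\pi'\colon \C^n \to A$. The images $b_i := \pi'(e_i)$ will be the desired pairwise orthogonal projections.

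First, since the canonical basis $\{e_i\}_{i<n}$ is included in the net $\{b_h^{(n,\delta)}\}_{h<J}$, the hypothesis $\phi_{n,\delta}(\bar a)<\delta$ directly yields $\|a_i^2 - a_i\| < \delta$, $\|a_i a_j\| < \delta$ for $i \neq j$, and $\|a_i\| \le 1+\delta$. The crucial role of the $\delta/n$ density of the net (rather than the looser $\delta$) is that it promotes these bounds from the net to the whole unit ball $\C^n_1$ with error independent of $n$: for any $b \in \C^n_1$ one picks $h$ with $\|b - b_h^{(n,\delta)}\|_\infty < \delta/n$ and estimates
\[
\bigl\|\tilde\pi(b) - \tilde\pi(b_h^{(n,\delta)})\bigr\| \le \sum_{i<n} \bigl|b(i) - b_h^{(n,\delta)}(i)\bigr|\cdot\|a_i\| \le n \cdot \tfrac{\delta}{n}\cdot(1+\delta) = O(\delta),
\]
the cancellation of the factor $n$ being precisely what the $\delta/n$ density produces. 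Combined with $\|P_{h_0,h_1}(\bar a)\|<\delta$, this yields $\|\tilde\pi(b)\tilde\pi(b') - \tilde\pi(bb')\| = O(\delta)$ for all $b,b' \in \C^n_1$ and $\|\tilde\pi(b)\| \le 1+O(\delta)$, with constants independent of $n$ and $A$.

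Next, we apply Ulam stability for $\C^n$ from \cite{ulam_mv}, which supplies a modulus $\kappa(\delta) \to 0$ as $\delta \to 0$, independent of $n$ and $A$, providing a $^*$-homomorphism $\pi'\colon \C^n \to A$ with $\|\pi'(e_i) - a_i\| < \kappa(\delta)$ for each $i$. Setting $b_i := \pi'(e_i)$ produces pairwise orthogonal projections, and taking $\delta$ small enough so that $\kappa(\delta)<\e$ gives the desired norm approximation. The equality $\phi_{n,\delta}(\bar b) = 0$ is then verified by direct computation: since $b_ib_j = \delta_{ij}b_i$,
\[
P_{h_0,h_1}(\bar b) = \sum_{i,j<n}\lambda^{(n,\delta,h_0)}_i\lambda^{(n,\delta,h_1)}_jb_ib_j - \sum_{i<n}\lambda^{(n,\delta,h_0)}_i\lambda^{(n,\delta,h_1)}_ib_i = 0,
\]
while $\|P_{h_0}(\bar b)\| = \|b^{(n,\delta)}_{h_0}\|_\infty \le 1$ since $b_0, \dots, b_{n-1}$ are orthogonal projections.

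The main obstacle is the uniformity in $n$. A self-contained approach that first replaces each $a_i$ by its spectral projection $p_i = \chi_{[1/2,1]}(a_i)$ (producing $\|p_i - a_i\| = O(\delta)$ and $\|p_ip_j\| = O(\delta)$ for $i\neq j$) and then forces exact orthogonality by iterative corner compressions does go through, but yields bounds that degrade as the number of steps grows. The point of the result in \cite{ulam_mv} is precisely that by treating the target algebra $\C^n$ globally, rather than orthogonalizing one coordinate at a time, it produces a perturbation bound depending only on the quality of the approximate homomorphism and not on $n$.
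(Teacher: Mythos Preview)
Your proof is correct and follows essentially the same approach as the paper: define the linear map $\mathbb{C}^n \to A$ by $e_i \mapsto a_i$, use the $\delta/n$-density of the net to show it is a $\delta'$-$^*$-homomorphism with $\delta' = O(\delta)$ uniformly in $n$, and then invoke \cite[Theorem~A]{ulam_mv}. Your write-up even makes explicit the verification of $\phi_{n,\delta}(\bar b)=0$ and the rationale for the $\delta/n$ spacing, both of which the paper leaves implicit.
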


\begin{proof}
Take $\bar a =(a_0, \dots, a_{n-1})$ as in the statement of the lemma. We claim that the map, extended by linearity,
\begin{align*}
\Phi \colon \mathbb{C}^n &\to A \\
e_i &\mapsto a_i
\end{align*}
is a $\delta'$-$^*$-homomorphism in the sense of \cite[Definition 1.1]{ulam_mv}, with $\delta' = 5 \delta + 4 \delta^2$. Granted this, the conclusion of the lemma follows by \cite[Theorem~A]{ulam_mv}.

Note that the map $\Phi$ is linear by definition, and it is $^*$-preserving since each $a_i$ is positive. We need to show that the remaining conditions in \cite[Definition 1.1]{ulam_mv} are satisfied, that is
\begin{equation} \label{eq:bound}
\| \Phi(a) \| \le 1 + \delta', \ \forall a \in \mathbb{C}^n_1,
\end{equation}
\begin{equation} \label{eq:mult}
\| \Phi(a) \Phi(b) - \Phi(ab) \| \le \delta', \ \forall a,b \in \mathbb{C}^n_1.
\end{equation}
Notice that both inequalities are automatically satisfied with $\delta$ in place of $\delta'$ if $a,b \in \{b^{(n,\delta)}_h \}_{h < J}$, because of \eqref{eq:ai}. In order to prove them for general $a,b$, note first that
\[
\| \Phi(a) - \Phi(b) \| \le n \| a - b \|, \ \forall a,b \in \mathbb{C}^n_1,
\]
hence, by our choice of $\{b^{(n,\delta)}_h \}_{h < J}$ (see \eqref{eq:bh}), for every $a \in \mathbb{C}^n_1$ there exists $h < J$ such that $\| \Phi(a) - \Phi(b_h) \| < \delta$. This in particular entails
\[
\| \Phi(a) \| \le \| \Phi(b_h) \| + \|\Phi(a) - \Phi(b_h) \| \le 1 + \delta + \delta \le 1 + \delta', \ \forall a \in \mathbb{C}^n_1.
\]
Similarly, given $a,b \in \mathbb{C}^n_1$ and $h_0, h_1 < J$ such that $\| \Phi(a) - \Phi(b_{h_0}) \| < \delta$ and $\| \Phi(b) - \Phi(b_{h_1}) \| < \delta$, we have
\begin{align*}
\| \Phi(a) \Phi(b) - \Phi(ab) \| &\le \| \Phi(b_{h_0}) \Phi(b_{h_1}) + \Phi(b_{h_0}b_{h_1}) \| + 2\delta(1 +2 \delta) + 2 \delta \\
& \le \delta + 2\delta(1 +2 \delta) + 2 \delta \\
& = \delta'.\qedhere
\end{align*}
\end{proof}

The idea behind the proof of Theorem~\ref{thm:algbstoK_0} is to provide a translation from $\mathcal L_{K_0,u}$-formulas to $\mathcal L_{\mathrm{C}^*,c}$ ones. We do this by passing through $\mathcal L_{V,u}$-formulas. Given a tuple of projections $\bar p \in \mathcal{P}(A\otimes\mathcal K)^n$, we denote by $[\bar p]$ the tuple of the corresponding classes in $V(A)$.

\begin{proposition} \label{prop:V(A)}
For every $\mathcal L_{V,u}$-formula $\phi(\bar x)$ there exists an $\mathcal L_{\mathrm{C}^*,c}$-formula $\psi(\bar z)$ of the same arity and of rank at most $\omega + \rk(\phi(\bar x)) $ such that for every unital \cstar-algebra $A$
\[
V(A) \models \phi([\bar p]) \text{ if and only if }\psi^{A \otimes \mathcal{K}}(\bar p) = 0.
\]
In particular, for all unital \cstar-algebras $A$ and $B$, 
\[
A\otimes \mathcal K\equiv_{\omega + \alpha}B\otimes \mathcal K\Rightarrow V(A) \equiv_\alpha V(B).
\]
\end{proposition}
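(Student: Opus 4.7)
The plan is to prove the main displayed statement by structural induction on the $\mathcal L_{V,u}$-formula $\phi(\bar x)$, producing for each $\phi$ an $\mathcal L_{\mathrm{C}^\ast,c}$-formula $\psi$ of rank at most $\omega+\rk(\phi)$ whose zero set on tuples of projections captures the truth of $\phi$ in $V(A)$. Once this is in hand, the ``in particular'' clause is immediate: an $\mathcal L_{V,u}$-sentence $\phi$ of rank $\leq\alpha$ translates into a sentence $\psi$ of rank $\leq\omega+\alpha$, and $A\otimes\mathcal K\equiv_{\omega+\alpha}B\otimes\mathcal K$ forces $\psi^{A\otimes\mathcal K}=\psi^{B\otimes\mathcal K}$, giving $V(A)\models\phi$ iff $V(B)\models\phi$.

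For the atomic case $\phi(\bar x) = (t_1(\bar x) = t_2(\bar x))$ with $t_i(\bar x) = n_0^{(i)} u + \sum_{j} n_j^{(i)} x_j$, for projections $\bar p$ the element $t_i^{V(A)}([\bar p])$ is represented by a pairwise orthogonal block sum of $n_0^{(i)}$ copies Murray--von Neumann equivalent to $c = 1_A \otimes q$ and, for each $j$, $n_j^{(i)}$ copies MvN-equivalent to $p_j$. Equality in $V(A)$ thus amounts to MvN equivalence of the two block sums inside $A\otimes\mathcal K$, which is expressible by a formula of the shape
\[
\psi_{\mathrm{atom}}(\bar z) = \inf_{v,\bar y^{(1)},\bar y^{(2)}}\Big[E(\bar y^{(1)},\bar y^{(2)},\bar z) + \|vv^{*}-Y_1\|+\|v^{*}v-Y_2\|\Big],
\]
where $Y_i:=\sum_\ell y^{(i)}_\ell$ and $E$ is a subformula penalising failures of each $y^{(i)}_\ell$ to be a projection, of orthogonality within each family, and of MvN equivalence of each $y^{(i)}_\ell$ to its intended target ($c$ or some $z_j$, implemented via an inner $\inf$ over a partial isometry). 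The forward direction of the iff is immediate by lifting $V(A)$-witnesses. The converse combines Lemma~\ref{lemma:ulamstable} (Ulam stability) with the fact that projections close in norm are automatically MvN equivalent, upgrading approximate witnesses to exact ones. The resulting rank is finite, in particular $\leq\omega$.

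For the inductive step, discrete connectives and countable conjunctions/disjunctions are handled by applying the corresponding continuous connectives and pointwise suprema, with ranks combining as expected (clause \eqref{clause3} of the definition of $\rk$). Quantifiers $\exists y$ and $\forall y$ are translated respectively by $\inf_w$ and $\sup_w$ over $A\otimes\mathcal K$, augmented by a penalty $\mathrm{proj}(w) := \|w^{2}-w\|+\|w-w^{*}\|$ forcing the witness to lie near a projection; each such step raises the rank by exactly $1$, preserving the bound by ordinal arithmetic $\omega+(\alpha+1)=\omega+\alpha+1$. The chief technical obstacle arises in the existential case: one must show that when the infimum evaluates to $0$ an \emph{exact} projection witness actually exists, not merely approximate ones. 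This is achieved by combining Lemma~\ref{lemma:ulamstable} with the inductive hypothesis, exploiting that on projections the translated formulas are determined by the MvN class and that sufficiently close projections share MvN class. Negated atomic formulas are handled analogously: their truth sets on projection tuples are clopen in the projections (being locally constant in MvN class), and can be realised as the zero set of a similar infimum-style formula built from auxiliary witnesses certifying non-equivalence.
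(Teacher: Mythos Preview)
Your overall strategy (induction on the complexity of $\phi$) matches the paper, and your treatment of the atomic case is essentially right. But the inductive hypothesis you carry is too weak, and this causes genuine gaps at three places.

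The paper strengthens the inductive hypothesis by requiring, in addition to your condition $V(A)\models\phi([\bar p])\Leftrightarrow\psi^{A\otimes\mathcal K}(\bar p)=0$, that every $\psi$ produced be \emph{$1$-Lipschitz} and satisfy a \emph{gap condition}: there is a fixed $\delta>0$ (uniform over all $\phi$) such that $\psi^{A\otimes\mathcal K}(\bar p)<\delta$ implies $\psi^{A\otimes\mathcal K}(\bar p)=0$. Both are essential. First, in continuous $\mathcal L_{\omega_1\omega}$ a countable $\bigvee$ or $\bigwedge$ is only a legal formula when the constituents share a common modulus of continuity and a common bounding interval; your sentence ``applying the corresponding continuous connectives and pointwise suprema'' glosses over this, and without the uniform $1$-Lipschitz bound the infinitary step simply does not produce a formula. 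Second, negation: there is no continuous connective sending a $[0,\infty)$-valued function to one whose zero set is the complement. The paper's device is $\psi:=\delta\dotminus\psi_0$, which works precisely because the gap condition forces $\psi_0$ to take values in $\{0\}\cup[\delta,\infty)$ on projection tuples. Your proposal to handle negated atomics via ``auxiliary witnesses certifying non-equivalence'' does not work: there is no first-order certificate that two projections are \emph{not} Murray--von Neumann equivalent. Third, your existential step needs the gap condition too. From $\inf_w[\psi_0(\bar z,w)+\mathrm{proj}(w)]=0$ you get $w$ with $\psi_0(\bar z,w)$ small and $w$ close to a genuine projection $q$; continuity (the $1$-Lipschitz bound) gives $\psi_0(\bar z,q)$ small, but to conclude $\psi_0(\bar z,q)=0$ you need exactly the gap condition. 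Your appeal to ``the translated formulas are determined by the MvN class'' only tells you that the \emph{zero set} is MvN-invariant, not that small values collapse to zero.

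In short: the missing idea is to load the induction with $1$-Lipschitz uniformity and the $\delta$-gap, and to verify both are preserved at every step (including choosing the coefficient $3$ in front of $\mathrm{proj}(w)$ so that the gap survives the $\inf$). Once you add these, your outline becomes the paper's proof.
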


\begin{proof}
The proof is by induction on the rank of $\phi(\bar x)$. We prove a stronger statement that will make the inductive step work, specifically the step taking care of the infinitary connectives. More specifically, we show that there exists $\delta > 0$ such that for every $\mathcal L_{V,u}$-formula $\phi(\bar x)$ there is a $1$-Lipschitz $\mathcal L_{\mathrm{C}^*,c}$-formula $\psi(\bar z)$ with $\rk(\psi(\bar z))\leq \omega + \rk(\phi(\bar x))$ satisfying, for every unital \cstar-algebra $A$ and tuple of projections $\bar p$ of the appropriate length
\begin{enumerate}[label=(\roman*)]
\item\label{V(A)-cond1} $V(A) \models \phi([\bar p])$ if and only if $\psi^{A \otimes \mathcal{K}}(\bar p) = 0$,
\item\label{V(A)-cond2} $\psi^{A \otimes \mathcal{K}}(\bar p) < \delta$ implies $\psi^{A \otimes \mathcal{K}}(\bar p) = 0$.
\end{enumerate}
We start with the atomic case, so we assume that
\[
\phi(\bar x, \bar y) := \sum_{i<n_0} x_i + m_0 \cdot u = \sum_{j<n_1} y_j + m_1 \cdot u.
\]
Since the proof works the same way for every $m_0,m_1 \in \mathbb{N}$, we assume $m_0=m_1=1$. Recall that the constant $c$ in the language $\mathcal L_{\mathrm{C}^*,c}$ is interpreted as $1\otimes q$ in $A\otimes\mathcal K$, where $q\in\mathcal K$ is a minimal projection. A standard argument in \cstar-algebras shows that there exists $1 > \gamma > 0$ such that, if $B$ is a \cstar-algebra, $p_0, p_1 \in B$ are projections and $s \in B$ is a contraction satisfying $\max \{ \| s^*s - p_0 \|, \| ss^* -p_1 \| \} < \gamma$, then $p_0 \sim p_1$, that is there exists a partial isometry $r \in B$ such that $r^*r = p_0$ and $rr^* = p_1$. Let $\delta' > 0$ be the value obtained from Lemma~\ref{lemma:ulamstable} for $\e = \gamma/2$, and set $\delta = \min \{ \gamma/2, \delta' \}$.
Consider the formula 
\begin{eqnarray*}
\psi(\bar z,\bar w)&:=&\inf_v\inf_{v_0, \dots , v_{n_0}}\inf_{u_0, \dots, u_{n_1}} \max_{i< n_0, j< n_1} \{ \norm{v_i^*v_i-z_i},\norm{u_j^*u_j-w_j},\\
&& \norm{v_{n_0}^*v_{n_0}-c},\norm{u_{n_1}^*u_{n_1}-c},\norm{vv^*-\sum\nolimits_{i \le n_0} v_iv_i^*},\norm{v^*v-\sum\nolimits_{j\le n_1} u_ju_j^*},\\
&& \phi_{n_0 + n_1 + 2,\delta}(v_0v_0^*, \dots, v_{n_0}v_{n_0}^*,u_0u_0^*, \dots, u_{n_1}u_{n_1}^*) \}.
\end{eqnarray*}
Note that $\psi(\bar z, \bar w)$ has finite rank and that it is $1$-Lipschitz in the free variables $(\bar z, \bar w)$.

We start by verifying condition~\ref{V(A)-cond2}. Let us unravel $\psi$: suppose that we have sequences of projections $\bar p$ and $\bar q$ from $A \otimes \mathcal{K}$, for any given unital \cstar-algebra $A$, such that $\psi^{A \otimes \mathcal{K}}(\bar p,\bar q) < \delta$. By Lemma~\ref{lemma:ulamstable} there are pairwise orthogonal projections $\{ p_i',q_j' \}_{i \le n_0, j \le n_1} \subseteq A\otimes \mathcal{K}$ and $r_0, \dots, r_{n_0}, s_0, \dots, s_{n_1} \in A \otimes \mathcal K$ such that, setting $p_{n_0} = q_{n_1} = 1 \otimes q$,
\[
\max_{i \le n_0, j \le n_1}
\{ \norm{ r_i^*r_i - p_i}, \norm{r_ir_i^* - p_i'}, \norm{ s_j^*s_j - q_j }, \norm{s_js_j^* - q_j'} \} < \frac{\gamma}{2},
\]
which, by the choice of $\gamma$ gives
\[
p_i \sim p_i' \text{ and } q_j \sim q_j', \ \forall i \le n_0, j\le n_1.
\]
The inequality $\psi^{A \otimes \mathcal{K}}(\bar p,\bar q) < \delta$ also entails the existence of $t \in A \otimes \mathcal{K}$ which satisfies
\[
\max \Big\{ \Big \lVert tt^* - \sum\nolimits_{i \le n_0} p_i' \Big \rVert, \Big \lVert t^*t - \sum\nolimits_{j \le n_1} q_j' \Big \rVert \Big\} < \gamma,
\]
and therefore also the sums $\sum_{i \le n_0} p_i'$ and $\sum_{j \le n_1} q_j'$ are Murray von Neumann equivalent. In other words, $\psi^{A \otimes \mathcal{K}}(\bar p,\bar q)=0$, and moreover the value 0 is a minimum attained by a family of partial isometries witnessing $\sum[p_i]+[1\otimes q]=\sum[q_j]+[1\otimes q]$, or, equivalently, $V(A) \models \phi([\bar p], [\bar q])$. This also proves the right-to-left implication in condition \ref{V(A)-cond1}. The other implication in \ref{V(A)-cond1} is immediate since $\psi$ formalizes the notion of $\sim$ when computed in $A\otimes\mathcal K$.

Consider the case $\phi(\bar x) = \neg \phi_0(\bar x)$. By inductive assumption there exists an $\mathcal L_{\mathrm{C}^*,c}$-formula $\psi_0(\bar z)$ of rank at most $\omega + \rk(\phi(\bar x))$ such that for every unital \cstar-algebra $A$ the formula $\psi_0(\bar z)$ satisfies conditions~\ref{V(A)-cond1} and \ref{V(A)-cond2}. The formula $\psi(\bar z) := \delta \dotminus \psi_0(\bar z)$ is as required, being obtained by a combination of 1-Lipschitz connectives.

Let $\phi(\bar x) = \bigwedge_{n \in \N} \phi_n(\bar x)$. By inductive assumption there are $\mathcal L_{\mathrm{C}^*,c}$-formulas $\psi_n(\bar z)$ for $n \in \N$ of rank at most $\omega + \rk(\phi(\bar x)) $ such that for every unital \cstar-algebra $A$ the $1$-Lipschitz formulas $\psi_n(\bar z)$ satisfy conditions~\ref{V(A)-cond1} and \ref{V(A)-cond2}. Given some $M \in \N$ greater than $\delta$, the formulas $\min \{ \psi_n(\bar z), M \}$ are again $1$-Lipschitz $\mathcal L_{\mathrm{C}^*,c}$-formulas whose range is contained in $[0,M]$; hence we can assume that all $\psi_n(\bar z)$ have the same range. We set $\psi(\bar z) := \bigvee_{n \in \N} \psi_n(\bar z)$. Since all the $\psi_n(\bar y)$'s are $1$-Lipschitz, so is $\psi(\bar z)$.

Finally, consider the case $\phi(\bar x) = \exists y \phi_0(\bar x, y)$. By inductive assumption there exists an $\mathcal L_{\mathrm{C}^*,c}$-formula $\psi_0(\bar z, w)$ of rank at most $\omega + \rk(\phi_0(\bar x,y))$ such that for every unital \cstar-algebra $A$ the formula $\psi_0(\bar z,w)$ satisfies conditions~\ref{V(A)-cond1} and \ref{V(A)-cond2}, and it is $1$-Lipschitz. It is a well-known fact that, given some $a \in B$ of norm one in a given \cstar-algebra $B$, for every $\e > 0$ there exists $\delta' > 0$ such that $\max \{ \| a - a^* \|, \| a- a^2 \| \} < \delta'$ implies the existence of a projection $p \in B$ such that $\| a - p \| < \e$ (see e.g. \cite[Exercise 2.7]{rll_ktheory}). Direct computations show that for $\e < 1/2$, setting $\delta' = \e/3$ does the job. Indeed, in this case, with $b = (a + a^*) /2$, we have that $\| a- b \| < \e/6$, and that $\| b - b^2 \| < 5\e /6$. The latter in particular implies that the spectrum of $b$ is contained in $[-5\e /6, 5\e/6] \cup [1 - 5\e/6, 1 + 5\e/6]$, and by continuous functional calculus there is a projection $p \in B$ such that $\| b -p \| < 5\e/6$, and thus $\| a - p\| < \e$. With these observations in mind, and exploiting the fact that $\psi_0(\bar z, w)$ is 1-Lipschitz in $w$, it can be checked that the formula
\[
\psi(\bar z) := \inf_w \psi_0(\bar z, w) + 3 \cdot {\max \{ \| w - w^* \| , \| w - w^2 \| \}},
\]
is the required $\mathcal L_{\mathrm{C}^*,c}$-formula. Since $\psi(\bar z)$ is furthermore $1$-Lipschitz in $\bar z$, we are done.
\end{proof}

Given an abelian semigroup $H$, let $G(H)$ be its Grothendieck group. The elements of $G(H)$ are pairs of elements in $H$ such that $(g_0, g_1) = (h_0, h_1)$ if and only if there is $k \in H$ satisfying $g_0 + h_1 + k = h_0 + g_1 + k$. Let $G(H)_+ = \{ (g,0) \mid g \in H \}$ and define a preorder relation on $G(H)$ by setting $g \le h$ if and only if $h-g \in G(H)_+$. With these definitions, the Grothendieck group of a pointed abelian semigroup $(H,v)$ can be naturally interpreted as an $\mathcal{L}_{K_0, u}$-structure by setting $u^{G(H)} = (v,0)$, in analogy to what we did in Remark \ref{remark:interpret}.

Given a tuple of elements $\bar g :=(g_0 = (g_{0,0}, g_{0,1}),\dots, g_{n-1}=(g_{n-1,0},g_{n-1,1})) \in G(H)^n$, we denote by $\bar g_j$, for $j = 0,1$, the tuple $(g_{0,j}, \dots, g_{n-1,j}) \in H^n$. Below, we generalize the fact that the first order theory of $H$ is interpretable in the first order theory of $G(H)$ to the $\mathcal L_{\omega_1\omega}$-setting.

\begin{proposition} \label{prop:semigp}
For every $\mathcal L_{K_0,u}$-formula $\phi(\bar x)$ there exists an $\mathcal L_{V,u}$-formula $\psi(\bar y)$ whose arity is double the arity of $\phi$, whose rank is at most $2 \cdot \rk(\phi(\bar x)) +2$ and such that, for every abelian pointed semigroup $H$ and every tuple $\bar g$ from $G(H)$, we have that 
\[
G(H) \models \phi(\bar g) \text{ if and only if } H \models \psi(\bar g_0, \bar g_1).
\]
In particular, if $H_0$ and $H_1$ are abelian pointed semigroups and $\alpha$ is a countable ordinal, then
\[
H_0 \equiv_{2 \cdot \alpha +2} H_1 \Rightarrow G(H_0) \equiv_\alpha G(H_1).
\]
If in addition $\alpha$ is limit, then
\[
H_0 \equiv_\alpha H_1 \Rightarrow G(H_0) \equiv_\alpha G(H_1).
\]
\end{proposition}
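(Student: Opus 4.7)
The plan is to proceed by induction on the complexity of the $\mathcal L_{K_0,u}$-formula $\phi(\bar x)$, translating each variable $x_i$ into a pair $y_{i,0}, y_{i,1}$ of semigroup variables that encode the formal difference $y_{i,0} - y_{i,1}$ defining an element of the Grothendieck group. For any $\mathcal L_{K_0,u}$-term $t(\bar x) = \sum_i n_i x_i + m u$ with $n_i, m \in \mathbb{Z}$, I would define two $\mathcal L_{V,u}$-terms $t^+(\bar y_0, \bar y_1)$ and $t^-(\bar y_0, \bar y_1)$ by placing the monomials with positive coefficients (and the positive part of $m$) into $t^+$ and those with negative coefficients (with their absolute values, and the negative part of $m$) into $t^-$, so that $t^{G(H)}(\bar g) = (t^+(\bar g_0, \bar g_1), t^-(\bar g_0, \bar g_1))$ in $G(H)$ whenever $g_i = (g_{i,0}, g_{i,1})$.

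For the base case, I would use the definition of $G(H)$ as $(H \times H)/{\sim}$, where $(a,b) \sim (c,d)$ iff $\exists k\,(a+d+k = c+b+k)$, to translate atomic formulas. An equality $t_1(\bar x) = t_2(\bar x)$ becomes
\[
\exists k\,(t_1^+ + t_2^- + k = t_2^+ + t_1^- + k),
\]
of rank $1$. An inequality $t_1(\bar x) \leq t_2(\bar x)$, asserting that the difference $t_2 - t_1 = (t_2^+ + t_1^-, t_2^- + t_1^+)$ lies in $G(H)_+$, becomes
\[
\exists k\, \exists m\,(t_2^+ + t_1^- + m = k + t_2^- + t_1^+ + m),
\]
of rank $2$. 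Either way, the translation of an atomic formula has rank at most $2 = 2 \cdot 0 + 2$, matching the target bound.

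For the inductive steps, negation and countable conjunctions/disjunctions translate pointwise and do not affect the rank bound, while the quantifier step $\phi = \exists y\, \phi_0$ is translated to $\exists z_0\, \exists z_1\, \psi_0$, which raises the rank by $2$ and preserves $\rk(\psi) \leq 2 \rk(\phi) + 2$; at each step the equivalence $G(H) \models \phi(\bar g) \iff H \models \psi(\bar g_0, \bar g_1)$ reduces to the atomic case above. The first corollary $H_0 \equiv_{2\alpha+2} H_1 \Rightarrow G(H_0) \equiv_\alpha G(H_1)$ is then immediate: any $\mathcal L_{K_0,u}$-sentence $\phi$ of rank at most $\alpha$ translates to $\psi$ of rank at most $2\alpha + 2$, on which $H_0$ and $H_1$ agree by hypothesis.

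For the limit case, I would use the ordinal-arithmetic fact that if $\alpha$ is a limit ordinal and $\beta < \alpha$, then $2\beta + 2 < \alpha$ (writing $\alpha = \omega \cdot \alpha'$ and $\beta = \omega \delta + n$ with $\delta < \alpha'$, $n < \omega$, gives $2\beta + 2 = \omega \delta + 2n + 2 < \omega(\delta + 1) \leq \alpha$). With this in hand, I would refine the inductive claim to show that $\rk(\phi) \leq \alpha$ (limit) implies $\rk(\psi) \leq \alpha$: formulas of rank strictly below $\alpha$ give translations of rank strictly below $\alpha$ by the main bound, while formulas of rank exactly $\alpha$ cannot come from a single top-level quantifier (which always produces a successor rank), so they must arise from negation or countable connectives applied to subformulas of smaller rank, and the inductive translations remain at rank $\leq \alpha$. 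The main care will be in setting up $t^+, t^-$ correctly when coefficients are negative and in checking that the existential formulas faithfully encode equality and order in $G(H)$; the infinitary inductive steps and the ordinal-arithmetic refinement for limit $\alpha$ are routine once the atomic translation is in place.
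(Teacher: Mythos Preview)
Your proposal is correct and follows essentially the same inductive translation as the paper: split each group variable into a pair of semigroup variables, translate atomic equalities via $\exists z(\cdots = \cdots)$ and atomic inequalities via $\exists z\,\exists w(\cdots = \cdots)$, and double each quantifier, giving the bound $\rk(\psi)\le 2\cdot\rk(\phi)+2$.

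The one point where you diverge from the paper is the limit case. The paper argues more simply: for limit $\alpha$, agreeing on all sentences of rank $\le\alpha$ is equivalent to agreeing on all sentences of rank $\le\beta$ for every $\beta<\alpha$ (any sentence of limit rank has a propositional connective at the top, hence its immediate subformulas are again sentences). Combined with $2\beta+2<\alpha$ for $\beta<\alpha$, this gives $G(H_0)\equiv_\alpha G(H_1)$ directly from the main bound, without refining the induction. Your alternative---proving that the translation of a rank-$\alpha$ formula still has rank $\le\alpha$---also works, but your phrasing ``subformulas of smaller rank'' is not quite right: a rank-$\alpha$ formula $\neg\phi_0$ or $\bigwedge_i\phi_i$ may have immediate subformulas of rank exactly $\alpha$. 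The fix is to induct on formula \emph{complexity} rather than rank: subformulas of strictly smaller rank are handled by the main bound plus the ordinal arithmetic, while subformulas of rank $\alpha$ have lower complexity and are handled by the inductive hypothesis. With that correction your route is sound, though slightly more involved than the paper's.
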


\begin{proof}
We prove this by induction on the complexity of the formula. If $\phi(\bar x)$ is atomic then, up to reindexing the variables, it has two possible forms
\[
\sum_{j=0}^{m-1} x_j + m_0 \cdot u = \sum_{j=m}^{n-1} x_j + m_1 \cdot u,
\]
or
\[
\sum_{j=0}^{m-1} x_j + m_0 \cdot u \le \sum_{j=m}^{n-1} x_j + m_1 \cdot u.
\]
In the first case $\psi(\bar y)$ can be chosen to be
\[
\psi(\bar y) := \exists z\left(\sum_{j=0}^{m-1}y_{j,0} + \sum_{j=m}^{n-1} y_{j,1} + m_0 \cdot u + z =
\sum_{j=0}^{m-1}y_{j,1} + \sum_{j=m}^{n-1} y_{j,0} + m_1 \cdot u + z\right).
\]
In the second case we set
\[
\psi(\bar y) := \exists z,w\left(\sum_{j=0}^{m-1}y_{j,0} + \sum_{j=m}^{n-1} y_{j,1} + m_0 \cdot u + z +w =
\sum_{j=0}^{m-1}y_{j,1} + \sum_{j=m}^{n-1} y_{j,0} + m_1 \cdot u + z\right).
\]
The rest of the proof follows by an induction argument, and we explicitly check only the existential case. Let $\phi(\bar x) = \exists z \phi_0(\bar x, z)$. By inductive hypothesis there exists a formula $\psi_0(\bar y_0, \bar y_1, w_0, w_1)$ of rank at most $2 \cdot \rk(\phi_0(\bar x, z)) + 2$ such that for every abelian pointed semigroup $H$ and every tuple $(\bar g, g')$ in $G(H)$ we have
\[
G(H) \models \phi_0(\bar g, g')\text{ if and only if } H \models \psi_0(\bar g_0, \bar g_1, g'_0, g'_1).
\]
The formula $\exists w_0, w_1 \psi_0(\bar y_0, \bar y_1, w_0, w_1)$ is as required, as it has rank at most $2 \cdot \rk(\phi_0(\bar x, z)) + 4 = 2 \cdot \rk(\phi(\bar x)) + 2$.\\
The second statement in the proposition is now clear.
Finally, assume $\alpha$ is a countable limit ordinal and $H_0 \equiv_\alpha H_1.$ To prove that $G(H_0) \equiv_\alpha G(H_1),$ it suffices to show that $G(H_0) \equiv_\beta G(H_1),$ for every $\beta < \alpha.$ This follows because $2 \cdot \beta + 2 < \alpha$ for every $\beta < \alpha.$
\end{proof}

\begin{proof}[Proof of Theorem~\ref{thm:algbstoK_0}]
If $A\otimes\mathcal K\equiv_{\omega + 2\cdot \alpha +2} B\otimes\mathcal K$, then $V(A) \equiv_{2 \cdot \alpha +2}V(B)$ by Proposition~\ref{prop:V(A)}. By Proposition~\ref{prop:semigp} this gives $(K_0(A), K_0(A)_+,[1_A])\equiv_\alpha (K_0(B), K_0(B)_+,[1_B])$. If $\alpha \geq \omega^2$ is limit and 
$A\otimes\mathcal K\equiv_\alpha B\otimes\mathcal K,$ then $V(A) \equiv_\alpha V(B)$ by  Proposition~\ref{prop:V(A)}, because $\alpha = \omega + \alpha.$ By Proposition~\ref{prop:semigp}, then $(K_0(A), K_0(A)_+,[1_A])\equiv_\alpha (K_0(B), K_0(B)_+,[1_B])$.
\end{proof}

It is natural to ask whether, for a unital \cstar-algebra $A$, one could extract the $\mathcal L_{\omega_1\omega}$-theory of $A\otimes\mathcal K$ from that of $A$. The following question summarizes this problem.

\begin{question}\label{ques:compacts}
Let $\alpha<\omega_1$. Is there $\beta<\omega_1$ such that for all unital \cstar-algebras $A$ and $B$, if $A\equiv_\beta B$ then $A\otimes\mathcal K\equiv_\alpha B\otimes\mathcal K$?
\end{question}

In an optimal situation the way of obtaining the $\beta$ required to answer positively Question~\ref{ques:compacts} is uniform over $\alpha$ (we would be tempted to conjecture that $\omega \cdot \alpha$ works).

An idea to answer Question~\ref{ques:compacts} positively could be to follow the proof of Proposition~\ref{prop:V(A)} and try to find, for any $\mathcal L_{\mathrm{C}^*,c}$-sentence $\phi$, another $\mathcal L_{\mathrm{C}^*}$-sentence $\psi$ such that $\phi^{A\otimes\mathcal K}=\psi^A$ and such that the rank of $\psi$ can be bounded (uniformly) by the rank of $\phi$ (here, as $A$ is unital, $A$ is canonically an $\mathcal L_{\mathrm{C}^*}$-structure). The problem is that we cannot ensure that the formulas constructed in this manner have all the same modulus of uniform continuity (as they involve larger and larger matrix amplifications), and therefore the inductive procedure halts when taking infinitary connectives.

Using the Partial Isomorphism games, we provide a partial answer to Question~\ref{ques:compacts} in Proposition~\ref{P.compacts}. We view this result as evidence of the validity of the approach to infinitary logic via games. 

The following is well-known to operator algebraists. We record it for completeness.

\begin{lemma}\label{L.eps}
Let $A$ be a \cstar-algebra, $n\in\N$ and $\e>0$. Then there is $\delta>0$ such that for all elements $a_{i,j}$ and $a'_{i,j}$ of $A$, where $i,j< n$, if 
\[
\norm{a_{i,j}-a_{i,j}'}<\delta,
\]
then, for $(a_{i,j}), (a_{i,j}') \in M_n(A)$, 
\[
\norm{(a_{i,j})-(a_{i,j}')}<\e.
\]
\end{lemma}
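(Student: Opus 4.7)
The plan is to exploit the standard identification $M_n(A) \cong A \otimes M_n(\mathbb{C})$ and the well-known fact that the C$^\ast$-norm on an $n \times n$ matrix is controlled above by a sum of the norms of its entries. Concretely, for any matrix $(b_{i,j}) \in M_n(A)$ one can write
\[
(b_{i,j}) = \sum_{i,j < n} b_{i,j} \otimes e_{i,j},
\]
where $\{e_{i,j}\}_{i,j<n}$ denotes the standard system of matrix units in $M_n(\mathbb{C})$. Each summand $b_{i,j} \otimes e_{i,j}$ has norm exactly $\|b_{i,j}\|$ in $M_n(A)$, since $\|e_{i,j}\| = 1$ in $M_n(\mathbb{C})$, and therefore by the triangle inequality
\[
\|(b_{i,j})\|_{M_n(A)} \leq \sum_{i,j < n} \|b_{i,j}\|.
\]

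Granted this estimate, the lemma follows by applying it to the entrywise difference. Given $\varepsilon > 0$, I would simply set $\delta := \varepsilon / n^2$. Then under the hypothesis $\|a_{i,j} - a'_{i,j}\| < \delta$ for all $i, j < n$, the displayed bound with $b_{i,j} := a_{i,j} - a'_{i,j}$ gives
\[
\|(a_{i,j}) - (a'_{i,j})\|_{M_n(A)} \leq \sum_{i,j < n} \|a_{i,j} - a'_{i,j}\| < n^2 \delta = \varepsilon.
\]
There is no real obstacle in this argument; the only delicate point, and the reason the authors bother recording the lemma, is that $\delta$ depends on $n$. This non-uniformity in $n$ is precisely what prevents a naive induction on formula complexity from transferring infinitary sentences between $A$ and $A \otimes \mathcal{K}$, and is the source of the difficulty flagged in Question~\ref{ques:compacts}.
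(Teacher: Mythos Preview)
Your proof is correct and is essentially the same as the paper's: both establish the inequality $\|(a_{i,j})\| \le \sum_{i,j<n} \|a_{i,j}\|$ and then take $\delta = \varepsilon/n^2$, with your version simply spelling out the tensor-product justification that the paper leaves as an exercise. Your closing remark about the dependence of $\delta$ on $n$ and its relevance to Question~\ref{ques:compacts} is also accurate.
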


\begin{proof}
It is an exercise to check that $\| (a_{i,j}) \| \le \sum_{i,j < n} \| a_{i,j} \|$, hence it suffices to take $\delta = \e/n^2$.
\end{proof}

Given a \cstar-algebra $A$, in the following proposition we naturally identify $M_n(A)$ with a subalgebra of $A \otimes \mathcal{K}$.
\begin{proposition}\label{P.compacts}
Let $A$ and $B$ be \cstar-algebras. Suppose that $\text{II} \uparrow \PI_{\omega\cdot\alpha}(A,B)$. Then $\text{II} \uparrow \PI_\alpha(A\otimes\mathcal K,B\otimes\mathcal K)$.
\end{proposition}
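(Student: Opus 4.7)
The plan is to follow the strategy-transfer template that underpins Theorem~\ref{thm:gamesK0alg}: given Player~II's winning strategy $\bar\sigma$ for $\PI_{\omega\cdot\alpha}(A,B)$, I will build a winning strategy for $\PI_\alpha(A\otimes\mathcal K,B\otimes\mathcal K)$ by running a single auxiliary match of $\PI_{\omega\cdot\alpha}(A,B)$ in parallel in which $\bar\sigma$ is played. The translation exploits the density of $\bigcup_n M_n(A)$ in $A\otimes\mathcal K$: each outer move of Player~I is approximated by a finite matrix of elements of $A$, and those entries become a finite batch of auxiliary Player~I moves. The factor $\omega$ in $\omega\cdot\alpha$ is precisely what supplies the ordinal budget for these variable-length batches, since $\alpha_k<\alpha_{k-1}$ implies $\omega\cdot\alpha_k+m<\omega\cdot\alpha_{k-1}$ for every $m<\omega$. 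As a preliminary, I fix an orthonormal basis of the Hilbert space underlying $\mathcal K$ whose first vector spans the range of $q$; this yields unital embeddings $M_n(A)\hookrightarrow A\otimes\mathcal K$ under which the constant $c=1_A\otimes q$ is realized as $\mathrm{diag}(1_A,0,\ldots,0)$.

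The individual round proceeds as follows. When Player~I plays $(\alpha_k,c_k,\e_k)$ with, say, $c_k\in A\otimes\mathcal K$, Player~II first picks $n_k\in\N$ and $c_k'=(a^{(k)}_{i,j})_{i,j<n_k}\in M_{n_k}(A)$ with $\|c_k-c_k'\|<\e_k/2$, then uses Lemma~\ref{L.eps} to obtain $\delta_k>0$ such that replacing each entry by a $\delta_k$-close one in $A$ yields a matrix within $\e_k/2$ of $c_k'$. Player~II then lets $\bar\sigma$ respond to the $n_k^2$ simulated Player~I moves
\[
(\omega\cdot\alpha_k+n_k^2-1,\,a^{(k)}_{0,0},\,\delta_k),\ \ldots,\ (\omega\cdot\alpha_k,\,a^{(k)}_{n_k-1,n_k-1},\,\delta_k),
\]
collecting replies $(\tilde a^{(k)}_{i,j},\tilde b^{(k)}_{i,j})\in A\times B$. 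The outer response is $(a_k,b_k)$ with $a_k:=(\tilde a^{(k)}_{i,j})\in M_{n_k}(A)$ and $b_k:=(\tilde b^{(k)}_{i,j})\in M_{n_k}(B)$; the choice of $\delta_k$ ensures $\|a_k-c_k\|<\e_k$.

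When the outer match ends, the auxiliary match terminates simultaneously (its clock is driven to $0$ in the last batch), so by the winning property of $\bar\sigma$ there is a unital $^*$-isomorphism $\Psi\colon\langle\tilde a^{(j)}_{i,i'}\rangle_{\mathcal L_{\mathrm{C}^\ast}}\to\langle\tilde b^{(j)}_{i,i'}\rangle_{\mathcal L_{\mathrm{C}^\ast}}$ matching each entry with its partner. Setting $N:=\max_{j<k}n_j$ and padding each matrix by zeros to live in $M_N$, the amplification $M_N(\Psi)$ is a $^*$-isomorphism sending each $a_j$ to $b_j$ and $\mathrm{diag}(1_A,0,\ldots,0)=1_A\otimes q$ to $1_B\otimes q$. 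Its restriction to the $\mathcal L_{\mathrm{C}^\ast,c}$-substructure of $A\otimes\mathcal K$ generated by $a_0,\ldots,a_{k-1}$ and $c$ is then an isomorphism onto the analogous substructure on the $B$-side, so Player~II wins.

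The step I expect to be the most delicate is this final extraction: one must verify that the $\mathcal L_{\mathrm{C}^\ast,c}$-substructure of $A\otimes\mathcal K$ generated by the $a_j$'s and $c$ really sits inside $M_N(\langle\tilde a^{(j)}_{i,i'}\rangle)$, so that $M_N(\Psi)$ restricts legally and maps onto the corresponding $B$-side substructure. This rests on $\Psi$ being unital (ensured because the inner game is played in the language $\mathcal L_{\mathrm{C}^\ast}$, whose substructures are unital subalgebras) and on $c$ being faithfully represented as $\mathrm{diag}(1_A,0,\ldots,0)$ in every $M_N(A)$ (ensured by the basis choice). With these in place, the ordinal bookkeeping, the passage from entry-wise to matrix-wise closeness via Lemma~\ref{L.eps}, and the simultaneous termination of the two matches are routine.
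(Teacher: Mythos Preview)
Your proposal is correct and follows essentially the same strategy-transfer argument as the paper: approximate each outer move by a finite matrix, feed its entries as a batch of moves into the auxiliary $\PI_{\omega\cdot\alpha}(A,B)$ match using the ordinal budget $\omega\cdot\alpha_k+m$, apply Lemma~\ref{L.eps} to control the matrix norm, and at termination amplify the entrywise isomorphism to the matrix level. You are in fact more explicit than the paper in two places: the paper simply asserts that the entrywise isomorphism ``naturally induces'' one between $\mathrm{C}^*(\{\tilde a_\ell\})$ and $\mathrm{C}^*(\{\tilde b_\ell\})$, whereas you spell out the padding to a common size $N$ and the amplification $M_N(\Psi)$; and you address the constant $c=1_A\otimes q$ in the $\mathcal L_{\mathrm{C}^*,c}$-language, which the paper's proof does not mention.
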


\begin{proof}
Suppose that in round $k = 0$ Player I plays $(\alpha_0,c_0,\e_0)$ where $\alpha_0<\alpha$, $\e_0>0$ and $c_0\in A \otimes \mathcal{K}$ (in case Player I plays $c_0 \in B \otimes \mathcal{K}$ the proof is verbatim). Player II finds $n_0$ and $a_0\in M_{n_0}(A)$ such that $\norm{c_0-a_0}<\e_0/2$, and they then write $a_0$ as a matrix $(a_{0,i,j})_{i,j < n_0}$ where each $a_{0,i,j}\in A$. Let moreover $\delta_0$ be the value given by Lemma~\ref{L.eps} for $\e = \e_0/2$. Player II starts then a round of $\PI_{\omega\cdot\alpha}(A,B)$ as if Player I had played already the first $n_0^2$ moves 
\[
((\omega\cdot\alpha_0+i\cdot n_0+j, a_{0,i,j},\delta_0))_{i,j < n_0}.
\]
Player II answers with their winning strategy for $\PI_{\omega\cdot\alpha}(A,B)$, and gets elements\linebreak $(\tilde a_{0,i,j},\tilde b_{0,i,j})\in A\times B$ with $\norm{\tilde a_{0,i,j}-a_{0,i,j}}<\delta_1$, for $i,j< n_0$. Finally, they play the pair 
\[
((\tilde a_{0,i,j}),(\tilde b_{0,i,j}))\in M_{n_0}(A)\times M_{n_0}(B).
\]
 By the choice of $\delta_0$, we have that 
\[
\norm{c_0-(\tilde a_{0,i,j})}< \norm{(a_{0,i,j})-(\tilde a_{0,i,j})}+\norm{c_0-a_0}<\e_0.
\]
Player II plays the rest of the game similarly, using the winning strategy they have for $\PI_{\omega \cdot \alpha}(A,B)$ to decide their moves in the ongoing match of $\PI_\alpha(A \otimes \mathcal{K}, B \otimes \mathcal{K})$. The game terminates with a sequence
\[
((\alpha_0, c_0, \e_0, \tilde{a}_0, \tilde{b}_0), \dots, (\alpha_{k-1}, c_{k-1}, \e_{k-1}, \tilde a_{k-1}, \tilde b_{k-1})),
\]
where $\alpha_{k-1} = 0$ and $(\tilde a_\ell, \tilde b_\ell) = ((\tilde a_{\ell,i,j}), (\tilde b_{\ell,i,j})) \in M_{n_\ell}(A) \times M_{n_\ell}(B)$, for $\ell < k$, and such that the sequence
\[
((\tilde a_{\ell,i,j}, \tilde b_{\ell,i,j}))_{\ell < k, i,j < n_\ell},
\]
is obtained following Player II's winning strategy for $\PI_{\omega \cdot \alpha}(A,B)$. Therefore, the map $\tilde a_{\ell,i,j}\mapsto \tilde b_{\ell,i,j}$, for $\ell< k, i,j< n_\ell$, gives an isomorphism between $\mathrm{C}^*(\{\tilde a_{\ell,i,j}\}_{\ell < k, i,j < n_\ell})$ and $\mathrm{C}^*(\{\tilde b_{\ell,i,j}\}_{\ell < k, i,j < n_\ell})$, which naturally induces an isomorphism between $\mathrm{C}^*(\{ \tilde a_\ell\}_{\ell < k})$ and $\mathrm{C}^*(\{\tilde b_\ell\}_{\ell < k})$, thus proving the proposition.
%Since 
%\[
%\norm{a_\ell-(\tilde a_{\ell,i,j})}<\e_\ell,
%\]
%we are done.
\end{proof}

Finally, we deduce the following Corollary~\ref{thm:combined}, which is of a summarizing nature.
In this statement we use the notation $A \equiv_{ \alpha}^{\PI} B$ (instead of $\text{II}\uparrow\PI_\alpha(A,B)$) for the statement that Player II has a winning strategy for $\PI_\alpha(A,B)$. We so far refrained from using such notation as there is no a priori reason that $\equiv_{ \alpha}^{\PI}$ is actually an equivalence relation, but this is now justified through Corollary~\ref{thm:combined} itself.   
\begin{corollary}\label{thm:combined}
	Let $A$ and $B$ be unital AF-algebras, let $\alpha<\omega_1$ be a non-zero ordinal which is such that $\alpha = \omega^\omega \cdot \beta,$ for some $\beta  \leq \alpha.$  Then the following are equivalent

\begin{enumerate}
	\item\label{el.eq1} $A \equiv_{ \alpha}^{\PI} B$;
	\item\label{el.eq2} $A\otimes\mathcal K\equiv_\alpha B\otimes\mathcal K$;
	\item\label{el.eq3} $(K_0(A),K_0(A)_+,[1_A]) \equiv_{ \alpha} (K_0(B),K_0(B)_+,[1_B]).$
\end{enumerate}
\end{corollary}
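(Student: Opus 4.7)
The plan is to close the cycle (\ref{el.eq1}) $\Rightarrow$ (\ref{el.eq2}) $\Rightarrow$ (\ref{el.eq3}) $\Rightarrow$ (\ref{el.eq1}) by combining the results already established in the paper. The hypothesis $\alpha = \omega^\omega \cdot \beta$ will be used only through the ordinal-arithmetic consequences that it forces: since $1+\omega = \omega$, we have
\[
\omega \cdot \alpha \;=\; \omega \cdot (\omega^\omega \cdot \beta) \;=\; \omega^{1+\omega}\cdot \beta \;=\; \omega^\omega \cdot \beta \;=\; \alpha,
\]
and moreover, since $\beta \geq 1$, the ordinal $\alpha$ is a nonzero limit ordinal with $\alpha \geq \omega^\omega \geq \omega^2$. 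These are exactly the properties needed to line up the various transfer results proved earlier.

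For (\ref{el.eq1}) $\Rightarrow$ (\ref{el.eq2}), I will start from a winning strategy for Player~II in $\PI_\alpha(A,B)$. Using $\omega\cdot\alpha = \alpha$, this is the same as a winning strategy for $\PI_{\omega\cdot\alpha}(A,B)$, so Proposition~\ref{P.compacts} applies and yields $\text{II}\uparrow \PI_\alpha(A\otimes \mathcal{K}, B\otimes\mathcal{K})$. Then Proposition~\ref{prop:PI}, applied to the $\mathcal L_{\mathrm{C}^\ast,c}$-structures $A\otimes\mathcal K$ and $B\otimes\mathcal K$, gives $A\otimes\mathcal K \equiv_\alpha B\otimes\mathcal K$.

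For (\ref{el.eq2}) $\Rightarrow$ (\ref{el.eq3}), since $\alpha$ is a limit ordinal with $\alpha \geq \omega^2$, the second half of Theorem~\ref{thm:algbstoK_0} applies directly: $A\otimes\mathcal K\equiv_\alpha B\otimes\mathcal K$ implies $(K_0(A), K_0(A)_+, [1_A]) \equiv_\alpha (K_0(B), K_0(B)_+, [1_B])$. No further work is needed here.

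For (\ref{el.eq3}) $\Rightarrow$ (\ref{el.eq1}), since the $K_0$-groups of AF-algebras are countable, Theorem~\ref{thm:EFD} converts $K_0(A) \equiv_\alpha K_0(B)$ into $\text{II}\uparrow \EFD_\alpha(K_0(A), K_0(B))$, which by $\omega\cdot\alpha = \alpha$ equals $\text{II}\uparrow \EFD_{\omega\cdot\alpha}(K_0(A), K_0(B))$. The transfer of strategies in Theorem~\ref{thm:gamesK0alg} then produces $\text{II}\uparrow \PI_\alpha(A,B)$, which is exactly $A \equiv_\alpha^{\PI} B$. Since the only real ingredient here beyond pointers to earlier results is verifying that the hypothesis on $\alpha$ gives $\omega\cdot\alpha = \alpha$ and $\alpha$ limit with $\alpha\geq\omega^2$, there is no substantive obstacle; the proof is essentially a bookkeeping exercise stitching together Theorem~\ref{thm:gamesK0alg}, Proposition~\ref{P.compacts}, Proposition~\ref{prop:PI}, Theorem~\ref{thm:algbstoK_0} and Theorem~\ref{thm:EFD}.
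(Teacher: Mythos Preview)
Your proof is correct and follows essentially the same route as the paper: the same cycle $(\ref{el.eq1})\Rightarrow(\ref{el.eq2})\Rightarrow(\ref{el.eq3})\Rightarrow(\ref{el.eq1})$ via Proposition~\ref{P.compacts} and Proposition~\ref{prop:PI}, Theorem~\ref{thm:algbstoK_0}, and Theorem~\ref{thm:EFD} with Theorem~\ref{thm:gamesK0alg}, using $\omega\cdot\alpha=\alpha$ throughout. Your explicit verification of the ordinal arithmetic (that $\alpha=\omega^\omega\cdot\beta$ forces $\omega\cdot\alpha=\alpha$ and $\alpha\ge\omega^2$ limit) is a welcome addition that the paper leaves implicit.
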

\begin{proof}
	\eqref{el.eq1}$\Rightarrow$\eqref{el.eq2}.
From $A \equiv_{ \alpha}^{\PI} B$ follows $\text{II} \uparrow \PI_\alpha(A\otimes\mathcal K,B\otimes\mathcal K)$ by Proposition~\ref{P.compacts} and because $\omega \cdot \alpha = \alpha.$ Then $A\otimes\mathcal K \equiv_\alpha  B\otimes\mathcal K$ follows by Proposition~\ref{prop:PI}. 

	\eqref{el.eq2}$\Rightarrow$\eqref{el.eq3}.
$A\otimes\mathcal K \equiv_\alpha  B\otimes\mathcal K$ in turn implies $(K_0(A),K_0(A)_+,[1_A]) \equiv_{ \alpha} (K_0(B),K_0(B)_+,[1_B])$ using Theorem~\ref{thm:algbstoK_0}.

The implication \eqref{el.eq3}$\Rightarrow$\eqref{el.eq1} follows immediately from Theorem~\ref{thm:EFD} and Theorem~\ref{thm:gamesK0alg} (again using $\omega \cdot \alpha = \alpha$).
\end{proof}

\appendix
\section{Computing the Bratteli Diagrams} \label{s:Bratteli}
In this subsection we aim to give an alternative description of the ordered unital groups $\mathcal G_{\alpha+1}$ which, being dimension groups, are inductive limits of ordered groups of the form $(\mathbb Z^n,\mathbb Z^n_+)$. Our goal is to describe such inductive systems, thus getting insights on the Bratteli diagram of the associated AF-algebras.

%We denote by $\bbQ^k_\ll \cong C(k, \bbQ)_\ll$ and more in general by $C(\alpha, \bbQ)_\ll$ the dimension group of continuous functions from an ordinal into $\bbQ$ (with the discrete topology) equipped with the strict order.

Let
\begin{equation} \label{eq:sequence}
	G_0 \stackrel{\phi_{0,1}}{\longrightarrow} G_1 \stackrel{\phi_{1,2}}{\longrightarrow} \dots
	\stackrel{\phi_{n-1,n}}{\longrightarrow} G_n \stackrel{\phi_{n,n+1}}{\longrightarrow} \dots
\end{equation}
be an inductive limit of ordered groups. Recall that the inductive limit in the category of ordered abelian groups is the group $(G, G_+)$, where $(G, \phi_n\colon G_n \to G)$ is the inductive limit in the category of abelian groups, and $G_+$ is equal to $\bigcup_{n \in \N} \phi_n((G_{n})_+)$ (\cite[Proposition~6.2.6]{rll_ktheory}).
%If $G'$ is a group and $\theta_n:G_n \to G'$ is a sequence of maps such that $\theta_{n+1} \circ \phi_{n,n+1} = \theta_n$ for every $n \in \N$, then, by definition of inductive limit, there exists a map $\zeta\colon G \to G'$ such that $\zeta \circ \phi_n = \theta_n$ for every $n \in \N$. In particular, if all $\theta_n$ are injective so is $\zeta$, and if $\bigcup_{n \in \N} \theta_n(G_n) = G'$ then $\zeta$ is surjective. The two conditions combined imply that $\zeta$ is an isomorphism. In this case it follows that the ordered group $(G', \bigcup_{ n\in \N} \theta_n(G_n^+))$ is the inductive limit in the category of ordered abelian group of the system \eqref{eq:sequence}.

\subsubsection{Case $\mathcal G_k$, for $k \in \N$}
Fix a positive $k\in\N$. In what follows we will build an inductive system of ordered groups
\begin{equation}\label{eq:system}
	\bbZ^k \stackrel{\phi_{0,1}}{\longrightarrow} \bbZ^k \stackrel{\phi_{1,2}}{\longrightarrow} \dots
	\stackrel{\phi_{n-1,n}}{\longrightarrow} \bbZ^k \stackrel{\phi_{n,n+1}}{\longrightarrow} \dots
\end{equation}
and homomorphisms $\theta_n: \bbZ^k \to \bbQ^k$ such that, for every $n \in \N$
\begin{enumerate}[label=(\roman*)]
	\item $\theta_{n+1} \circ \phi_{n,n+1} = \theta_n$,
	\item $\theta_n$ is injective,
	\item $\bigcup_{n \in \N} \theta_n(\bbZ^k) = \bbQ^k$,
	%\item $\theta_n(\bbZ^k_+) \subseteq \bbQ^k_{+,\ll}$,
	\item $\bigcup_{n \in \N} \theta_n(\bbZ^k_+) = \bbQ^k_{+,\ll}$.
\end{enumerate}
This will ensure that $(\bbQ^k, \bbQ^k_{+,\ll})$ is the inductive limit of the system \eqref{eq:system} in the category of ordered groups.

The construction starts by fixing a suitable sequence of positive integers $\{ a_n \}_{n \in \N}.$ For simplicity, we will work here with the sequence $\{ a_n \}_{n \in \N}$ that is defined by the following recursive relation
\begin{equation} \label{eq:an}
	a_0:= k +2, \quad a_n := 2(a_{n-1} +1) - k +1\quad (n\geq 1),
\end{equation}
however the reader can easily check that the construction will work just as well for any other sequence $\{ a_n \}_{n \in \N}$ that exhibits some simple conditions on its growth behavior and the parity of its terms.

For every $n \in \N$, let $A_n\in M_k(\mathbb N)$ be given by
\[
A_n = 
\begin{pmatrix}
	a_n & 1 & \cdots & 1 \\
	1 & a_n & \cdots & 1 \\
	\vdots & \vdots & \ddots & \vdots \\
	1 & 1 & \cdots & a_n 
\end{pmatrix}
\]
and let $\theta_n\colon \bbZ^k \to \bbQ^k$ be the homomorphisms associated to the matrices
\[
C_n := \frac{1}{a_n!} A_n.
\]
$A_n$ is invertible and its inverse is given by
\[
A^{-1}_n = \frac{1}{a_n^2 +a_n(k -2) - k +1}
\begin{pmatrix}
	a_n + k -2 & -1 & \cdots & -1 \\
	- 1 & a_n + k-2 & \cdots & -1 \\
	\vdots & \vdots & \ddots & \vdots \\
	-1 & -1 & \cdots & a_n + k -2
\end{pmatrix}.
\]
Set
\[
b_n := \frac{a_{n+1}!}{a_n! (a^2_{n+1} +a_{n+1}(k -2) - k +1)}.
\]
Define the matrix
\begin{equation} \label{eq:1}
	B_{n,n+1} := C_{n+1}^{-1}C_n,
\end{equation}
which can be concretely computed as
\[
b_n
\begin{pmatrix}
	a_n(a_{n+1} + k -2) - k +1 & a_{n+1} - a_n & \cdots & a_{n+1} - a_n \\
	a_{n+1}- a_n & a_n(a_{n+1} + k -2) - k +1& \cdots & a_{n+1} - a_n \\
	\vdots & \vdots & \ddots & \vdots \\
	a_{n+1} - a_n & a_{n+1} - a_n & \cdots & a_n(a_{n+1} + k -2) - k +1
\end{pmatrix}.
\]
We would like to define the maps $\phi_{n,n+1}\colon \bbZ^k \to \bbZ^k$ composing the inductive system in \eqref{eq:system} as the group homomorphisms induced by $B_{n,n+1}$.

In order to do this, we need to ensure that $b_n$ is an integer for every $n \in \N$. We have that
\begin{align*}
	b_n = \frac{a_{n+1}!}{a_n! (a^2_{n+1} +a_{n+1}(k -2) - k +1)} &=\frac{ (a_n +1) (a_n +2) \dots(a_{n+1} -1) a_{n+1}}{(a_{n+1} -1)(a_{n+1} + k -1)}\\
		 &= \frac{ (a_n +1) (a_n +2) \dots (a_{n+1} -2) a_{n+1}}{(a_{n+1} + k -1)}.
\end{align*}
By our choice of $\{a_n \}_{n \in \N}$, we have $a_{n+1} + k -1 = 2(a_n +1)$ and $a_{n+1} - a_n > 4$. This allows to simplify the denominator in the expression of $b_n$ above, which is therefore an integer.

As all matrices $B_{n,n+1}$ have positive entries, the homomorphisms
$\phi_{n,n+1} \colon \bbZ^k \to \bbZ^k$ are positive and similarly it is immediate to check from the definition of $C_n$ that $\theta_n(\bbZ^k_+) \subseteq \bbQ^k_{+,\ll}$.
Furthermore, the following triangle commutes by \eqref{eq:1}
\[
\begin{tikzcd}
	\bbZ^k \arrow{r}{\phi_{n,n+1}} \arrow[swap]{dr}{\theta_n} & 
	\bbZ^k \arrow{d}{\theta_{n+1}} \\
	& \bbQ^k
\end{tikzcd}.
\]
All maps $\theta_n\colon \bbZ^k \to \bbQ^k$ are injective since the matrices $C_n$ are invertible, so all we need to check to conclude is that $\bigcup_{n \in \N} \theta_n (\bbZ^k) = \bbQ^k$ and that $\bigcup_{n \in \N} \theta_n (\bbZ^k_+)= \bbQ^k_{+,\ll}$.

For the first equality let $(p_0, \dots, p_{k-1}) \in \bbQ^k$, which we can assume is of the form $(\frac{x_0}{a_n!}, \dots, \frac{x_{k-1}}{a_n!})$ for some $\bar x:= (x_0, \dots, x_{k-1}) \in \bbZ^k$ and $n$ big enough. Since the sequence $\{ a_n \}_{n \in \N }$ has been chosen so that each $b_n$ is integer, the vector
\begin{equation} \label{eq:y}
	\bar y := \frac{1}{a_n!} C_{n+1}^{-1} \bar x = b_n (a_{n+1}^2 +a_{n+1}(k -2) - k +1) A_{n+1}^{-1} \bar x
\end{equation}
belongs to $\bbZ^k$. It follows then that
\[
\theta_{n+1}(\bar y) = C_{n+1} \bar y = \frac{1}{a_n!}C_{n+1} C_{n+1}^{-1} \bar x = \frac{1}{a_n!} \bar x,
\]
as required.

To see that $\bigcup_{n \in \N} \theta_n (\bbZ^k_{+}) = \bbQ^k_{+,\ll}$, write again an arbitrary non-zero element $(p_0, \dots, p_{k-1})$ of $\bbQ^k_{+,\ll}$ in the form $(\frac{x_0}{a_n!}, \dots, \frac{x_{k-1}}{a_n!})$ for some $\bar x:= (x_0, \dots, x_{k-1}) \in \bbZ^k$ with strictly positive entries. Notice that the entries of $\bar y$ in \eqref{eq:y} have the form
\begin{align*}
	y_0 &= b_n ((a_{n+1} + k -2) x_0 - x_1 - \dots -x_{k-1}),\\
	y_1 &= b_n (- x_0 + (a_{n+1} + k -2)x_1 - \dots -x_{k-1}),\\
	&\dots\\
	y_{k-1} &= b_n (- x_0 - x_1 - \dots +(a_{n+1} + k -2)x_{k-1}).
\end{align*}
Therefore, since all $x_j$ are assumed to be strictly positive and since the sequence $\{ a_n \}_{n \in \N}$ is strictly increasing, there is $m \ge n \in \N$ such that
\[
a_{m+1} > \max \left\{ \frac{\sum_{j \not = 0} x_j}{x_0}, \dots, \frac{\sum_{j \not = k-1} x_j}{x_{k-1}} \right\} -k +2.
\]
As a consequence, the vector $\bar y' \in \bbZ^k$ defined as
\begin{align*}
	y'_0 &= b_m ((a_{m+1} + k -2) x_0 - x_1 - \dots -x_{k-1}),\\
	y'_1 &= b_m (- x_0 + (a_{m+1} + k -2)x_1 - \dots -x_{k-1}),\\
	&\dots\\
	y'_{k-1} &= b_m (- x_0 - x_1 - \dots +(a_{m+1} + k -2)x_{k-1}),
\end{align*}
is such that all its entries are positive, and such that $\theta_{m+1}( \bar y') = \frac{1}{a_m!} \bar x$, hence $\theta_{m+1}(\frac{a_m!}{a_n!} \bar y' ) = \frac{1}{a_n!} \bar x$ as required.

The multiplicity of the arrows in the Bratteli diagram of (the AF-algebra associated to) $\mathcal{G}_k$ can be directly obtained by looking at the entries of the matrices $B_{n,n+1}$.

\subsubsection{Case $\mathcal G_{\omega +1}$}
We have that $\mathcal G_{\omega+1}$ is the inductive limit of the following system
\[
\mathcal G_1 \stackrel{\psi_{1,2}}{\longrightarrow} \mathcal G_2 \stackrel{\psi_{2,3}}{\longrightarrow} \dots \stackrel{\psi_{n-1,n}}{\longrightarrow}
\mathcal G_n \stackrel{\psi_{n,n+1}}{\longrightarrow} \dots \longrightarrow \mathcal G_{\omega+1},
\]
where the map $\psi_{n,n+1}\colon \bbQ^n \to \bbQ^{n+1}$ is the positive homomorphism defined as
\[
(p_1, \dots, p_n) \xmapsto{\psi_{n,n+1}} (p_1, \dots, p_n, p_n).
\] 
For every $k \in \N$, let $\{ \bbZ^k, \phi^{(k)}_{n,n+1} \}_{n,m \in \N}$ be the inductive system defined in the previous section whose limit is $\mathcal G_k$. We have that
\begin{equation} \label{eq:bigdia}
	\begin{tikzcd}[row sep=large]
		\mathcal G_1 \arrow[r, "\psi_{1,2}"]& \mathcal G_2 \arrow[r,"\psi_{2,3}"]&\dots \arrow[r, "\psi_{k-1,k}"] & \mathcal G_k \arrow[r, "\psi_{k,k+1}"]&\dots \arrow[r]& \mathcal G_{\omega+1} \\
		\vdots \arrow[u]& \vdots \arrow[u] &\dots & \vdots \arrow[u]& \dots \\
		\bbZ \arrow[u, "\phi^{(1)}_{2,3}"]& \bbZ^2 \arrow[u, "\phi^{(2)}_{2,3}"] &\dots & \bbZ^k\arrow[u, "\phi^{(k)}_{2,3}"]& \dots \\
		\bbZ \arrow[u,"\phi^{(1)}_{1,2}"]& \bbZ^2 \arrow[u, "\phi^{(2)}_{1,2}"] &\dots & \bbZ^k\arrow[u, "\phi^{(k)}_{1,2}"]& \dots
	\end{tikzcd}.
\end{equation}
Let $D_k$ be the $(k+1) \times k$ matrix associated to $\psi_{k,k+1}$ that is
\[
D_k = 
\begin{pmatrix}
	1 & 0 & \cdots & 0 \\
	0 & 1 & \cdots & 0 \\
	\vdots & \vdots & \ddots & \vdots \\
	0 & 0 & \cdots & 1 \\ 
	0 & 0 & \cdots & 1
\end{pmatrix}.
\]
Let $\theta_n^{(k)}\colon \bbZ^k \to \mathcal G_k$ be the maps defined in the previous section and let $C_n^{(k)}$ be the matrices associated to them. Set $\bar y := D_1 C^{(1)}_1 1_\bbZ$. By construction there is $n_2 \in \N$ big enough so that $\bar y \in \theta_{n_2}^{(2)}(\bbZ^2_+)$. Let $\eta_{1,2}\colon \bbZ \to \bbZ^2$ be the positive homomorphism associated to the matrix $E_{1,2}:= (C^{(2)}_{n_2})^{-1} D_1 C^{(1)}_1$. The map $\eta_{1,2}$ sends $\bbZ_{+}$ to $\bbZ^2_{+}$ since $1_\bbZ$ is mapped to some element in $\bbZ^2_{+}$. Similarly let $\eta_{k,k+1}\colon \bbZ^k \to \bbZ^{k+1}$ be the map associated to the matrix $E_{k,k+1}:=(C^{(k+1)}_{n_{k+1}})^{-1} D_k C^{(k)}_{n_k}$, where the sequence $\{n_k \}_{k \in \N}$ is chosen so that $D_kC^{(k)}_{n_k} \bar x\in \theta_{n_{k+1}}^{(k)}(\bbZ^{k+1}_+)$ for every $\bar x \in \bbZ^{k}_+$. Note that such sequence of integers exists since $\bbZ^{k}_+$ is finitely generated.

It follows that $\mathcal G_{\omega +1}$ is the inductive limit of the system
\[
\bbZ^1 \stackrel{\eta_{1,2}}{\longrightarrow} \bbZ^2 \stackrel{\eta_{2,3}}{\longrightarrow} \dots \stackrel{\eta_{n-1,n}}{\longrightarrow}
\bbZ^n \stackrel{\eta_{n,n+1}}{\longrightarrow} \dots,
\]
and that multiplicity of the arrows in the Bratteli diagram of (the AF-algebra associated to) $\mathcal{G}_{\omega +1}$ can be recovered from the entries of the matrices $E_{n,n+1}$.

\subsubsection{Case $\mathcal G_{\alpha +1}$ for $\alpha$ infinite ordinal}
We proceed by induction on $\alpha$.

If $\alpha = \beta +1$ where $\beta$ is an infinite successor ordinal, then $\beta $ and $\alpha$ are homeomorphic, thus $\mathcal G_{\alpha+1} \cong \mathcal G_{\beta+1}$ and the Bratteli diagram of (the AF-algebra associated to) $\mathcal G_{\alpha+1}$ can be recovered from the Bratteli diagram of $\mathcal G_{\beta+1}$.

In case $\beta$ is a limit ordinal, then let $\{ \alpha_n \}_{n \in \N}$ be an increasing sequence of ordinals with $\sup_n \alpha_n = \beta$. We have
\[
\mathcal G_{\alpha_1+1}\stackrel{\psi_{1,2}}{\longrightarrow} \mathcal G_{\alpha_2+1}\stackrel{\psi_{2,3}}{\longrightarrow} \dots \stackrel{\psi_{n-1,n}}{\longrightarrow}
\mathcal G_{\alpha_n+1} \stackrel{\psi_{n,n+1}}{\longrightarrow} \dots \longrightarrow \mathcal G_{\beta+1},
\]
where the maps $\psi_{n,n+1} \colon \mathcal G_{\alpha_n+1}\to \mathcal G_{\alpha_{n+1}+1}$ are the positive homomorphisms
\[
\psi_{n,n+1}(f)(\beta) =
\begin{cases}
	f(\beta) \text{ if } \beta \le \alpha_n, \\
	f(\alpha_{n}) \text{ if } \alpha_n < \beta \le \alpha_{n+1}.
\end{cases}
\]
We can therefore write a diagram similar to \eqref{eq:bigdia}, using the inductive system corresponding to each $\mathcal G_{\alpha_n+1}$. From here, the Bratteli diagram of $\mathcal G_{\beta+1}$ can be obtained in a similar way to the case $\beta = \omega$.

\begin{acknow}
 AVa is supported by European Union's Horizon 2020 research and innovation programme under the Marie Sk\l{}odowska-Curie grant agreement No.~891709. %\protect \includegraphics[width=0.6truecm]{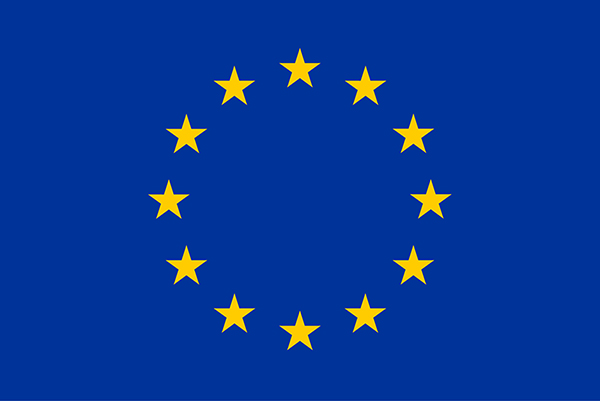}. 
BV and AVi are supported by an Emergence en recherche grant from the Universit\'e Paris Cit\'e (IdeX), and AVi is partial supported by the ANR grant AGRUME (ANR-17-CE40-0026).
BDB is supported by a Cofund MathInParis PhD fellowship, this project has received funding from the European Union’s Horizon 2020 research and innovation programme under the Marie Sk\l{}odowska-Curie grant agreement No.~754362.
%\protect 
\includegraphics[width=0.55truecm]{flageu.jpg}

\end{acknow}

\bibliographystyle{amsplain}
\bibliography{Bibliography}
\end{document}